\numberwithin{equation}{section}
\DeclareMathOperator{\sgn}{sgn}
\newcommand{\R}{\mathbb R}
\newcommand{\V}{\R^{2}}
\theoremstyle{plain}
\newtheorem{theorem}{Theorem}[section]
\newtheorem*{theorem*}{Theorem}
\newtheorem{lemma}[theorem]{Lemma}
\newtheorem{corollary}[theorem]{Corollary}
\newtheorem{proposition}[theorem]{Proposition}
\newenvironment{claim}[1]{\par\noindent\underline{Claim.}\space#1}{}
\newenvironment{claimproof}[1]{\par\noindent\emph{Proof of Claim.}\space#1}{\leavevmode\unskip\penalty9999 \hbox{}\nobreak\hfill\quad\hbox{$\blacksquare$}}
\theoremstyle{remark}
\newtheorem*{remark}{Remark}
\theoremstyle{remark}
\newtheorem{question}[theorem]{Question}
\theoremstyle{definition}
\newtheorem{definition}[theorem]{Definition}
\title[Nonexistence of $T_4$]{Nonexistence of $T_4$ configurations for hyperbolic systems and the Liu entropy condition}
\author[Sam G. Krupa]{Sam G.  Krupa}
\address[Sam G. Krupa]{\newline Max Planck Institute for Mathematics in the Sciences, \newline 04103 Leipzig, Germany}
\email{Sam.Krupa@mis.mpg.de}
\author[L\'{a}szl\'{o} Sz\'{e}kelyhidi, Jr.]{L\'{a}szl\'{o} Sz\'{e}kelyhidi, Jr.}
\address[L\'{a}szl\'{o} Sz\'{e}kelyhidi, Jr.]{\newline Institut  f{\"u}r Mathematik\\ Universit{\"a}t Leipzig, \newline D-04109, Leipzig\\ Germany \newline \textbf{Also:}
\newline Max Planck Institute for Mathematics in the Sciences, \newline 04103 Leipzig, Germany}
\email{laszlo.szekelyhidi@math.uni-leipzig.de}
\email{Laszlo.Szekelyhidi@mis.mpg.de}
\thanks{Both authors worked on this paper during a stay at the Hausdorff Research Institute for Mathematics (HIM) at the University of Bonn during the Evolution of Interfaces Trimester Program (during Spring 2019). The stay was supported by the HIM. Both authors would like to thank the HIM for the support and the nice working atmosphere. 
This project has received funding from the European Research Council (ERC) under the European Union’s Horizon 2020 research and innovation programme (Grant Agreement No. 724298).
This material is based upon work supported by the National Science
Foundation under Grant No. DMS-1926686. This work was partially supported by NSF Grant DMS-1614918. The first author was also partially supported by NSF-DMS Grant 1840314.}
\date{\today}                                           % Activate to display a given date or no date
\begin{document}
\keywords{Conservation laws, one space dimension, strict hyperbolicity, Liu entropy condition, entropy conditions, entropy solutions, convex integration, differential inclusion, non-uniqueness.}
\subjclass[2020]{Primary 35L65; Secondary 35L40, 35D30}
\begin{abstract}
    We study the constitutive set $\mathcal{K}$ arising from a $2\times 2$ system of conservation laws in one space dimension, endowed with one entropy and entropy-flux pair. The convexity properties of the set $\mathcal{K}$ relate to the well-posedness of the underlying system and the ability to construct solutions via convex integration. Relating to the convexity of $\mathcal{K}$, in the particular case of the $p$-system,   Lorent and  Peng [{\em Calc. Var. Partial Differential Equations}, 59(5):Paper No. 156, 36, 2020] show that $\mathcal{K}$ does not contain $T_4$ configurations. Recently, Johansson and Tione [{\em arXiv
e-prints}, page arXiv:2208.10979, August 2022] showed that $\mathcal{K}$ does not contain $T_5$ configurations. 

In this paper, we provide a substantial generalization of these results, based on a careful analysis of the shock curves for a large class of $2\times 2$ systems. In particular, we provide several sets of hypothesis on general systems which can be used to rule out the existence of $T_4$ configurations in the constitutive set $\mathcal{K}$. In particular, our results show the nonexistence of $T_4$ configurations for every well-known $2\times 2$ hyperbolic system of conservation laws which verifies the Liu entropy condition.
\end{abstract}
\maketitle
\tableofcontents
%\section{}
%\subsection{}

\section{Introduction}
Consider the $2\times 2$ system of conservation laws in one space dimension,
\begin{align}\label{system}
\begin{cases}
\partial_t U+ \partial_x f(U)=0,\mbox{ for } x\in\mathbb{R},\mbox{ } t>0,\\
U(x,0)=U^0(x),
\end{cases}
\end{align}
for the unknown function $U\colon \mathbb{R}\times[0,\infty)\to\mathcal{V}$ with initial datum $U^0$. The open convex set $\mathcal{V}\subset \V$ is the \emph{state space} where the solution $U$ takes its values ($U$-space). The function $f:\mathcal{V}\to\V$ is called the \emph{flux}. In this paper, we often take $\mathcal{V}=\V$. 

%We assume the Jacobian of the flux $Df$ has at each point in state space eigenvalues of opposite signs and both eigenvalues are uniformly bounded away from zero.

%In what follows, $F\in C^2$ denotes an arbitrary function from $\{X\in\mathbb{R}^n\colon\abs{x}\leq 1\}$ to $\mathbb{R}^n$, and $\norm{F}_{C^2}$ denotes its $C^2$ norm on that set. 
%\begin{theorem}[A quantitative inverse function theorem \cite[p.~595]{MR819558}]
%For any $A<\infty$ there exist $0<C_1,C_2,C_3<\infty$ depending only on $A$ and $n$, such that for any $r\leq 1$, for any $F$ satisfying $\norm{F}_{C^2}\leq A$ and $\abs{\det(DF)(0)}\geq$,
%\begin{enumerate}[(i)]
%\item F is one-to-one on $\{\abs{x}\leq C_1 r\}$,
%\item $F(\{\abs{x}\leq C_1 r\})$ contains $\{\abs{y-F(0)}\leq C_2 r^2\}$,
%\item $\norm{D(F^{-1})}_{C^1(\{\abs{y-F(0)}\leq C_2 r^2\})}\leq C_3 r^{-1}$.
%\end{enumerate}
%\end{theorem}

For hyperbolic systems of conservation laws, weak solutions (in the sense of distributions) are the natural class in which to study the well-posedness. Indeed, even for the scalar conservation laws in one space dimension $C^\infty$ initial data can typically develop discontinuities in finite time. On the other hand it is well-known that weak solutions to \eqref{system} are highly non-unique, hence one needs selection criteria, e.g. based on entropy, to try to single out the physically relevant solution. This can be done in large generality for scalar problems, but for systems \emph{in the large} it is not known if entropy conditions can be used to obtain a well-defined selection principle. The present paper is a contribution to this general problem in the setting of $2\times 2$ systems with a single strictly convex entropy.

We recall that an entropy/entropy-flux pair for the system \eqref{system} is a pair of functions
$\eta\colon\mathcal{V}\to\R$ and $q\colon\mathcal{V}\to\mathbb{R}$ such that 
\begin{align}\label{entropy_compat_convention912}
    \nabla q=\nabla\eta Df.
\end{align}
Under this condition any \emph{smooth} solution $U$ of the system \eqref{system} automatically satisfies $\partial_t\eta(U)+\partial_xq(U)=0$. If $\eta$ is convex, a weak solution is said to be \emph{entropic} (for the entropy $\eta$) if $U$ verifies the entropy inequality
\begin{align}\label{entropy_ineq}
    \partial_t\eta(U)+\partial_x q(U)\leq 0
\end{align}
in the sense of distributions. \emph{The basic paradigm is that the class of entropic weak solutions enjoys better properties than the general class of weak solutions.} One prominent example is the seminal work of DiPerna on compactness of the class of entropy solutions for a large class of $2\times 2$ systems \cite{MR684413,MR719807}, following the pioneering work of Tartar on compensated compactness \cite{MR584398}. In particular DiPerna showed compactness (and consequently global existence of weak solutions) in large generality for small data as well as for large data under additional (global) assumptions (satisfied e.g.~for the $p$-system). Although the compactness of the class of entropy solutions does not imply uniqueness, there is a large body of subsequent work on differential inclusions and convex integration \cite{MR2008346,MR2600877,MR2564474,multi_d_illposed} to the effect that \emph{lack of compactness} in many situations leads to `wild' \emph{non-uniqueness}. Thus, \emph{the question of compactness of the class of entropy solutions is central to our understanding of \eqref{system}}. 

When considering entropic weak solutions, one possibility is to assume that \eqref{entropy_ineq} holds \emph{for all} entropy/entropy-flux pairs, i.e. all pairs $(\eta,q)$ which satisfy \eqref{entropy_compat_convention912}. This is a natural requirement, satisfied for instance by weak solutions arising in the inviscid limit $\partial_t U^{\varepsilon}+\partial_x (f(U^{\varepsilon}))=\varepsilon \Delta U^{\varepsilon}$ (indeed, the strong convergence $U^{\varepsilon}\to U$ is closely related to the compactness question, see \cite{MR584398}). In the works of DiPerna and Tartar, this point of view is taken and the existence of a large set of entropy/entropy-flux pairs is crucially used. On the other hand, for general $n\times n$ hyperbolic systems of conservation laws with $n\geq 3$ the existence of entropy/entropy-flux pairs is not clear at all as the system \eqref{entropy_compat_convention912} is over-determined. For many physical systems only one entropy exists (for integrability conditions on the entropy, see \cite[p.~13-14]{dafermos_big_book} and \cite[p.~54-55]{dafermos_big_book}). %Moreover, several numerical schemes for hyperbolic conservation laws attempt to preserve the entropy inequality \eqref{entropy_ineq} and dissipate entropy for a given entropy (e.g. see \cite{tadmor_2003}), but typically, even when the system admits infinitely many entropies, these schemes only attempt to preserve the entropy inequality for a single entropy. Thus it is an important and interesting question to understand compactness properties of weak solutions which are entropic only for a single convex entropy. 

The compactness question for weak solutions of $2\times 2$ systems with a single convex entropy has been the subject of a number of works, most intensively for the $p$-system (in Lagrangian coordinates) or, equivalently, the system of isentropic gas-dynamics (in Eulerian coordinates) \cite{MR2008346,MR4144350,https://doi.org/10.48550/arxiv.2208.10979}. The key observation made in \cite{MR2008346} is that the system \eqref{system} with equality in \eqref{entropy_ineq} for a given entropy/entropy-flux pair $(\eta,q)$ can be equivalently reformulated as a first order differential inclusion of the type
$$
D\psi(x,t)\in \mathcal{K}_{f,\eta,q}\textrm{ a.e. } (x,t),
$$
where $\mathcal{K}_{f,\eta,q}$ is a given two dimensional surface embedded in the space of matrices $\R^{3\times 2}$ (see below in \Cref{solutions_by_convex_integration_section} and Equation \eqref{manifold_K}). Moreover, compactness properties of sequences of uniformly bounded approximate solutions of \eqref{system}-\eqref{entropy_ineq} are intimately connected to the rank-one geometry of $\mathcal{K}_{f,\eta,q}$. Such questions have also been the subject of intensive investigation in the context of elliptic systems \cite{MR1983780,MR2048569,MR4350101} -- and see again the survey \cite{MR2008346}. A main point in these works is to find or eliminate certain special 2-point configurations (rank-one connections) as well as special $N$-point configurations, $N\geq 4$ ($T_N$-configurations) -- see below in \Cref{sec:TNdef}: In a nutshell, the presence of such configurations implies lack of compactness, and in many cases lack of uniqueness/regularity, whereas the absence of such configurations is good indication (although not proof) of compactness. In the context of the $p$-system Lorent and Peng \cite{MR4144350} show that $\mathcal{K}_{f,\eta,q}$ does not contain $T_4$ configurations. This analysis was recently extended by Johansson and Tione \cite{https://doi.org/10.48550/arxiv.2208.10979}, who showed that $\mathcal{K}_{f,\eta,q}$ does not contain $T_5$ configurations. Both proofs are algebraic, and rely heavily on the specific form of $f,\eta,q$ given by the $p$-system. 

In this paper our purpose is to put the differential inclusions framework, introduced in \cite{MR2008346}, in the more general setting of $2\times 2$ systems with strictly hyperbolic and genuinely nonlinear flux $f$ and strictly convex entropy $\eta$. Our principal aim is to study the effect of various classically imposed (global) conditions on the flux or the entropy on the rank-one convex geometry of $\mathcal{K}_{f,\eta,q}$, specifically the absence of rank-one connections and $T_4$ configurations. Our main results, stated rigorously below in Section \ref{sec:hyp}, have the following immediate consequence:

\begin{theorem}\label{main_theorem2}
  Let $f,\eta,q$ be strictly hyperbolic and genuinely nonlinear, given by one of the following classical systems with their natural strictly convex entropy $\eta$:
  \begin{itemize}
  \item the $p$-system \eqref{psystem};
  \item isentropic Euler (with a general pressure law) \eqref{isentropicEulergeneral};
  \item the equations for an ideal gas or the system of shallow water waves \eqref{isentropicEuler};
  \item two coupled copies of Burgers \eqref{twoBurgers}.	
  \end{itemize}
 Then the set $\mathcal{K}_{f,\eta,q}$ \eqref{manifold_K} does not contain any $T_4$ configurations.
\end{theorem}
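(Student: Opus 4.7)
The plan is to deduce \Cref{main_theorem2} as an immediate corollary of the general structural results announced in \Cref{sec:hyp}. Those results will provide several sets of hypotheses on $(f,\eta,q)$ --- expressed in terms of the geometry of the shock curves of \eqref{system} together with the Liu entropy condition --- under which $\mathcal{K}_{f,\eta,q}$ is guaranteed to contain no $T_4$ configuration. Accordingly, I would organize the proof in two stages: first, prove one or more general no-$T_4$ theorems of the schematic form ``if $(f,\eta)$ is strictly hyperbolic, genuinely nonlinear, and satisfies additional hypotheses on the shape of the shock curves together with the Liu condition, then $\mathcal{K}_{f,\eta,q}$ contains no $T_4$''; and second, verify the chosen hypotheses case-by-case for each of the four systems \eqref{psystem}, \eqref{isentropicEulergeneral}, \eqref{isentropicEuler}, and \eqref{twoBurgers}.

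For the general statement, I would start from the parametrization of $\mathcal{K}_{f,\eta,q}$ as a two-dimensional surface in $\R^{3\times 2}$ recalled in \Cref{solutions_by_convex_integration_section}, and translate the existence of a $T_4$ in $\mathcal{K}_{f,\eta,q}$ into the existence of an ordered quadruple of states $U_1,U_2,U_3,U_4\in\mathcal{V}$ such that each consecutive pair $(U_i,U_{i+1})$ (indices mod $4$) is rank-one connected in $\mathcal{K}_{f,\eta,q}$. Because of the special form of $\mathcal{K}_{f,\eta,q}$, such rank-one connections are exactly entropy-conservative shocks (Rankine--Hugoniot together with entropy equality for the pair $(\eta,q)$); equivalently, each $U_{i+1}$ lies on one of the two shock curves $S_1(U_i),S_2(U_i)$ emanating from $U_i$. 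The $T_4$ definition further demands coefficients $\kappa_i>1$ and a common base matrix $C\in\R^{3\times 2}$ with $P_i = C+\kappa_i C_i$ for rank-one matrices $C_i$, which imposes a rigid algebraic compatibility between the four shock vectors and the four shock speeds.

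The main obstacle is precisely this general $T_4$ impossibility. I expect the argument to rest on a functional that is strictly monotone along any cycle of admissible shock connections --- natural candidates being the shock speed along a fixed shock curve, the entropy dissipation, or a signed area-type integral in state space. Genuine nonlinearity and strict convexity of $\eta$ provide sign control, while the Liu condition forces the appropriate monotonicity along a shock curve. Summing the contributions around the cycle $U_1\to U_2\to U_3\to U_4\to U_1$ should then produce a strict inequality incompatible with cycle closure. A case analysis over the $2^4$ possible assignments of shock family ($1$ or $2$) to each of the four edges seems unavoidable; strict hyperbolicity and the cyclic symmetries of the $T_4$ definition should collapse many of these, but a few configurations will require separate treatment, and this combinatorics is where I expect the bulk of the technical work to concentrate.

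Once such a general theorem is in hand, \Cref{main_theorem2} reduces to a verification of its hypotheses for each of the four specific systems. Each is strictly hyperbolic and genuinely nonlinear on its natural state space, each admits the stated strictly convex entropy, and the qualitative behaviour of the shock curves together with the Liu condition is by now classical for all four (see, e.g., \cite{dafermos_big_book}). The two-copies-of-Burgers case \eqref{twoBurgers} should provide a sanity check with completely explicit shock curves; the $p$-system case recovers the theorem of Lorent and Peng \cite{MR4144350}, giving consistency with the existing literature; and the isentropic Euler and ideal gas / shallow water cases furnish the genuinely new content of the theorem.
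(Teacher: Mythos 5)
Your two-stage blueprint (prove general no-$T_4$ theorems under hypotheses on shock curves plus the Liu condition, then verify those hypotheses for the four systems) does match the architecture of the paper, which deduces the statement from \Cref{main_theorem1}, \Cref{main_theorem} and \Cref{changeofCoordsnoT4} together with the case-by-case verifications of Section \ref{sec:systemsverify}. However, the core of your proposal rests on a misreading of what a $T_4$ configuration is, and this is a genuine gap. By \Cref{T_N_def}, the four matrices $X_1,\dots,X_4$ are required to have \emph{no} rank-one connections among them; the rank-one matrices $C_i$ appear only in the auxiliary representation $X_i=P+C_1+\cdots+C_{i-1}+\kappa_iC_i$ with $\sum_i C_i=0$, and the differences $X_i-X_j$ are never rank one. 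Your translation of a $T_4$ into a cycle $U_1\to U_2\to U_3\to U_4\to U_1$ of consecutive rank-one connections, i.e.\ entropy-conservative shocks, is therefore the opposite of the definition. Indeed, under the Liu condition the set $\mathcal{K}_{f,\eta,q}$ contains no rank-one connections at all (\Cref{norankone}, via the Lax dissipation formula \Cref{diss_lemma}), so the object you propose to rule out is empty for trivial reasons, while the actual difficulty --- that $T_4$ configurations can be embedded in sets having no rank-one connections, which is precisely why they matter for convex integration --- is not addressed. Consequently the proposed mechanism (a functional strictly monotone along the shock cycle, summed around the loop, with a $2^4$ case analysis over shock families assigned to the four ``edges'') does not engage with the real obstruction, and no amount of work on that combinatorics would close the argument.

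For comparison, the paper's proof uses only one property of $T_4$ (in fact $T_N$) configurations, namely the sign-change condition for $2\times 2$ subdeterminants of differences (\Cref{TN_change_sign}), combined with a linear-algebra reduction (\Cref{lin_combo}): after replacing $(\eta,q)$ by $(\tilde\eta,\tilde q)=(\eta+c\cdot U,\, q+c\cdot f(U))$ for a suitable $c$, all four states lie on a single level set of $\tilde\eta$ and a single level set of $\tilde q$. The contradiction then comes from global geometry of the Hugoniot locus relative to that level set: counting critical points of $\tilde q$ restricted to $\{\tilde\eta=C\}$ (using strict convexity, Smoller--Johnson, or the explicit structure for two Burgers), the Lax dissipation formula, and the Liu monotonicity of the shock speed, which together force all shocks emanating from a suitably chosen $T_4$ point to exit on one side, violating \Cref{TN_change_sign}. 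Finally, note that isentropic Euler \eqref{isentropicEulergeneral} does \emph{not} satisfy the hypotheses in Eulerian coordinates (the eigenvalue sign condition and the sector condition fail), so the paper passes to Lagrangian coordinates via Wagner's equivalence theorem (\Cref{changeofCoordsnoT4} and \Cref{LEcoordinates}); a correct proof must account for this step rather than verifying the hypotheses directly for \eqref{isentropicEulergeneral}.
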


%Choose $c_1,c_2$ such that $\nabla \tilde{q} (u^0)=0$. (If no such $u^0$ exists, then we are done by the argument below for the case of large $\norm{\nabla \tilde{q}}$.) Then due to the assumption that the Jacobian of the flux $Df$ has at each point in state space eigenvalues of opposite signs and both eigenvalues are uniformly bounded away from zero, we have that $\nabla\tilde\eta(u^0)=0$.

%Then, $D^2 \tilde q=D(\nabla\tilde\eta Df)=D^2\tilde\eta Df+\nabla\tilde\eta+D^2f=D^2\eta Df$, where we used that $D^2\tilde\eta=D^2\eta$.  Note that $D^2\eta Df$ has one positive eigenvalue and one negative eigenvalue (remark that $\det(D^2\eta Df)=\det(D^2\eta)\det(Df)<0$).

%Remark that for $d_1,d_2,d_3,d_4\in\mathbb{R}$ with $d_1>0,d_2<0$, the level sets of the function
%\begin{align}\label{hyperbola_fun}
 %  (x,y)\mapsto d_1 x^2+d_2y^2+d_3x+d_4y
%\end{align}
%are hyperbolas with connected components which are strictly convex. 

%\vspace{.1in}

\subsection{Differential inclusions}\label{solutions_by_convex_integration_section}
 \hspace{.04in}
 
 For a given flux $f$, entropy $\eta$, and entropy-flux $q$, following \cite{MR2008346}, we consider stream functions $\psi(x,t)\colon\mathbb{R}^2\to\mathbb{R}^3$ such that 
 \begin{equation}
 \begin{aligned}\label{streaming}
 (u_1,-f_1(U))&=((\psi_1)_x,-(\psi_1)_t)\\
  (u_2,-f_2(U))&=((\psi_2)_x,-(\psi_2)_t)\\
 (\eta(U),-q(U))&=((\psi_3)_x,-(\psi_3)_t),
 \end{aligned}
 \end{equation}
 where we write  $U=(u_1,u_2)$, $\psi=(\psi_1,\psi_2,\psi_3)$ for the components of $\psi$, and we write $f$ in terms of its components $f=(f_1,f_2)$. %Then note that for $\psi$ verifying \eqref{streaming}, the function $U\colon\mathbb{R}^2\to\V$, $U\coloneqq ((\psi_1)_x,(\psi_2)_x)$, will verify $\partial_t U+\partial_x f(U)=0$ and \eqref{entropy_ineq} in a distributional sense. This is due to the commuting of partial derivatives for distributions. In particular, \eqref{entropy_ineq} will be satisfied as an exact equality.
In terms of $\psi$, the system
\begin{equation}
\begin{aligned}\label{e:system}
	\partial_tU+\partial_xf(U)&=0\\
	\partial_t\eta(U)+\partial_xq(U)&=0
\end{aligned}
\end{equation}
is equivalent to the first order differential inclusion
\begin{align}\label{dffinclusion}
D\psi\in \mathcal{K}_{f,\eta,q}, 
\end{align}
where the constitutive set $\mathcal{K}_{f,\eta,q}\subset \R^{3\times 2}$ is given by 
\begin{align}\label{manifold_K}
\mathcal{K}_{f,\eta,q}\coloneqq \Bigg\{
G(U) : (u_1,u_2)=U\in\mathcal{V}
\Bigg\},
\end{align}
and 
\begin{align}\label{matrixmap}
G(U)\coloneqq
\begin{bmatrix}
u_1 & f_1(U) \\
u_2 & f_2(U) \\
\eta(U) & q(U)
\end{bmatrix}.
\end{align}

\subsubsection{Rank-one connections}

For the general program of compactness for approximate solutions to differential inclusions of the type \eqref{dffinclusion} we refer to \cite{MR2008346,MR1731640}. Roughly speaking, compactness properties of the inclusion \eqref{dffinclusion} are closely linked to the rank-one geometry of the set $\mathcal{K}_{f,\eta,q}$. The simplest example arises if there exist $A,B\in\mathcal{K}_{f,\eta,q}$ such that $A-B$ is a rank-one matrix - such pairs are called \emph{rank-one connections}. In this case it is easy to see that the inclusion admits plane-wave solutions oscillating between $A$ and $B$, leading to a loss of compactness. In our context such configurations would correspond to jump discontinuities with zero entropy production (i.e. equality in \eqref{entropy_ineq}). 

In the elliptic context studied in \cite{MR1983780,MR2048569,MR4350101}, rank-one connections can be easily ruled out by looking at the tangent space of the associated set $K$: (linearized) ellipticity implies that the tangent cannot contain rank-one matrices. However, as noted in the survey \cite{MR2008346}, the set $\mathcal{K}_{f,\eta,q}$ is necessarily degenerate in the sense that the tangent space will always contain rank-one matrices (this follows from \eqref{entropy_compat_convention912}), and therefore ruling out rank-one connections ``in the large'' is more subtle.  
In the case of scalar equations or for the $2\times 2$ $p$-system, assuming strict hyperbolicity and genuinely nonlinearity (which are higher-order local conditions), it is not hard to check by direct calculation that such rank-one connections cannot occur in the associated $\mathcal{K}_{f,\eta,q}$. However, in the general case these conditions do not seem to suffice. Although we believe that this is well-known, we will dedicate \Cref{sec:norankone} to show that certain standard conditions in the literature, which apply to a large class of systems (e.g. the Liu entropy condition) are sufficient to rule out rank-one connections (see \Cref{norankone} in the next subsection).

\subsubsection{$T_N$ configurations}\label{sec:TNdef}

Even when no rank-one connections exist in $\mathcal{K}_{f,\eta,q}$, compactness might be lost. This fact has been observed in the context of compensated compactness by Tartar \cite{MR1320538}, in the context of elliptic regularity by Scheffer \cite{MR2624766}, and later crucially used in \cite{MR1983780,MR2048569}. The prime example is given by a $T_N$ configuration, defined as follows.
\begin{definition}[$T_N$ configuration]\label{T_N_def}
An ordered set of $N\geq 4$ matrices $\{X_i\}_{i=1}^N\subset\mathbb{R}^{m\times n}$ without rank-one connections to said to form a $T_N$ \emph{configuration} if there exist matrices $P$, $C_i\in\mathbb{R}^{m\times n}$ and real numbers $\kappa_i>1$ such that 
\begin{equation}
\begin{aligned}\label{T_N_param}
X_1&=P+\kappa_1C_1\\
X_2&=P+C_1+\kappa_2C_2\\
\vdots\\
X_N&=P+C_1+\cdots+C_{N-1}+\kappa_N C_N,
\end{aligned}
\end{equation}
and moreover $\rm{rank}(C_i)=1$ and $\sum_{i=1}^N C_i=0$.
\end{definition}

The relevance of this definition is the following fact, used in a number of papers including \cite{MR1983780,MR2048569,MR2008346,MR2118899}: if a set $K$ contains $N$ elements $X_1,\dots,X_N$ which form a $T_N$ configuration, then the associated differential inclusion exhibits loss of compactness of approximate solutions. Moreover, under some additional non-degeneracy conditions, in this case the inclusion will admit (non-unique) Lipschitz solutions which are nowhere $C^1$ - with the identification \eqref{streaming} this would correspond to bounded measurable weak solutions of \eqref{e:system} which are extremely rough in the sense that for a fixed time $t$, the solution with this fixed time $U(\cdot,t)$ at any point $x$ doesn't have strong one-sided traces (in space)  in $L^1$ (c.f. \cite{MR4487515}). 

In light of this, an important question, besides eliminating rank-one connections, is to eliminate $T_N$ configurations in $\mathcal{K}_{f,\eta,q}$, as has been done for the $p$-system for $N=4$ in \cite{MR4144350} and for $N=5$ in \cite{https://doi.org/10.48550/arxiv.2208.10979}, and in the elliptic context in
\cite{MR4350101}. The proof in \cite{https://doi.org/10.48550/arxiv.2208.10979} hinges on the fact that the $p$-system can be reformulated such that the matrix \eqref{matrixmap} (when restricted to the first two rows) is symmetric. Thus, the rank-one matrices $C_i$ in the definition of the $T_N$ configuration (see \eqref{T_N_param}) will be symmetric (again, when restricted to the first two rows). Further, due to being rank-one, each of the $C_i$ (when restricted to the first two rows) will have at least one zero eigenvalue. Thus each $C_i$ has a sign. The proof then proceeds by casework, considering each of the $2^N$ possible cases ($N=4,5$) for the signs of the $C_i$ matrices in a $T_N$ configuration. However, this proof does seem to be sensitive to the specific structure of the $p$-system, and seems difficult to generalize to perturbations, e.g.~\eqref{generalpsystem} or \eqref{twoBurgers}, because in both cases the symmetric structure is lost.

On the other hand, the techniques of Lorent-Peng \cite{MR4144350}, whilst also algebraic, have some hidden geometric structure, which we are able to utilize in our more general setting. In \Cref{sec:lorentpengcomp} we will discuss and compare similarities and differences between our methods and those of \cite{MR4144350}.

For more information on $T_N$ configurations we refer to \cite{MR2118899,MR2048569}. The only property of $T_N$ configurations that we will need is the following well-known proposition:
\begin{proposition}\label{TN_change_sign}
Let $\{X_i\}_{i=1}^N\subset \mathbb{R}^{m\times n}$ be a $T_N$ configuration. For every $2\times 2$ subdeterminant $M:\R^{m\times n}\to \R$ and for every $i$ the set $\{M(X_i-X_j)\colon j\neq i\}$ necessarily changes sign.
\end{proposition}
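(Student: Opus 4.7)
The plan is to reduce the statement to a cyclic positivity argument by exploiting two structural features: first, any $2\times 2$ subdeterminant $M$ vanishes on rank-one matrices and is therefore \emph{affine} along any rank-one direction; second, a $T_N$ configuration is naturally attached to a closed polygon with ``corners'' $P_j := P + C_1 + \cdots + C_{j-1}$ (indices taken cyclically mod $N$, so that $P_{N+1} = P_1$), whose edge $L_j$ from $P_j$ to $P_{j+1}$ has rank-one direction $C_j$, and $X_j = P_j + \kappa_j C_j$ sits on $L_j$ at parameter $\kappa_j > 1$, past the endpoint $P_{j+1}$.

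Fix $i$ and define $\Phi(Y) := M(X_i - Y)$. Expanding $M(A + tC) = M(A) + t\,B(A, C) + t^2 M(C)$ with $B$ the symmetric bilinear form associated to $M$, and using $M(C_j) = 0$ for each $j$, one sees that $t \mapsto \Phi(P_j + tC_j)$ is affine on all of $\mathbb{R}$. Setting $a_j := \Phi(P_j) = M(X_i - P_j)$, affine interpolation (from $a_j$ at $t = 0$ to $a_{j+1}$ at $t = 1$, extrapolated to $t = \kappa_j$) gives the central identity
\begin{equation*}
    M(X_i - X_j) \;=\; (1 - \kappa_j)\, a_j + \kappa_j\, a_{j+1}, \qquad j = 1, \dots, N.
\end{equation*}
Moreover, on the distinguished edge $L_i$ the function $\Phi$ vanishes identically: $\Phi(P_i + sC_i) = M((\kappa_i - s)\, C_i) = (\kappa_i - s)^2 M(C_i) = 0$, forcing $a_i = a_{i+1} = 0$.

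The proof then becomes a cyclic induction. Suppose for contradiction that $M(X_i - X_j) > 0$ for every $j \neq i$. Applied at $j = i+1$ with $a_{i+1} = 0$, the identity gives $\kappa_{i+1}\, a_{i+2} > 0$, so $a_{i+2} > 0$. Inductively, if $a_{i+k} > 0$, the identity at $j = i+k$ yields $a_{i+k+1} > (1 - 1/\kappa_{i+k})\, a_{i+k} > 0$; thus $a_{i+k} > 0$ for $k = 2, \dots, N-1$, and in particular $a_{i-1} > 0$. But then
\begin{equation*}
    M(X_i - X_{i-1}) \;=\; (1 - \kappa_{i-1})\, a_{i-1} + \kappa_{i-1}\, a_i \;=\; (1 - \kappa_{i-1})\, a_{i-1} \;<\; 0,
\end{equation*}
contradicting our assumption. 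The case in which all values are negative is symmetric, yielding the sign change.

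The main obstacle, as I see it, is conceptual rather than computational: one has to recognize that all the information needed is encoded in the ``corner values'' $a_j$, two of which are forced to vanish by the rank-one geometry on $L_i$, and that the hypothesis $\kappa_j > 1$ is precisely what produces the sign flip in the concluding step---for a hypothetical configuration with some $\kappa_j \in (0,1)$ the argument would fail, mirroring the fact that such a configuration does not qualify as a $T_N$.
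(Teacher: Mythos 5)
Your argument is correct, and in fact the paper itself offers no proof of this proposition (it is quoted as well-known, with references); so the comparison is with the standard literature argument, which yours essentially reproduces: the two ingredients are exactly the rank-one affinity of a $2\times 2$ minor along the edges $P_j+tC_j$ of the polygon attached to the $T_N$, the vanishing of the two corner values $a_i=a_{i+1}=0$ (since $X_i-P_i=\kappa_iC_i$ and $X_i-P_{i+1}=(\kappa_i-1)C_i$ are of rank at most one), and the cyclic induction driven by $\kappa_j>1$. I verified the central identity $M(X_i-X_j)=(1-\kappa_j)a_j+\kappa_j a_{j+1}$ and the induction; both are sound, and the closing step indeed uses $\kappa_{i-1}>1$ to flip the sign. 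One small interpretive remark: what you prove is that the values $M(X_i-X_j)$, $j\neq i$, cannot all be strictly positive nor all strictly negative; this is the correct reading of ``changes sign,'' since for a general minor of $m\times n$ matrices zero values can occur (e.g.\ a planar $T_4$ padded with a zero row makes $\det_{13}$ vanish identically), and it is precisely the form in which the paper applies \Cref{TN_change_sign}, namely to contradict a uniform strict sign of the relevant subdeterminants.
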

In our setting the set $\mathcal{K}_{f,\eta,q}$ consists of $3\times 2$ matrices. It will be convenient to introduce the following notation for the relevant subdeterminants: for a matrix $A\in \R^{3\times 2}$ denote by $\det_{rs}A$ the $2\times 2$ subdeterminant consisting of the rows $r,s\in \{1,2,3\}$ of $A$.

%In this paper, we show that the set $\mathcal{K}_{f,\eta,q}$ does not contain $T_4$ configurations for a large class of $2\times 2$ hyperbolic systems of conservation laws. In particular, we are able to consider the class of all admissible systems which are genuinely nonlinear, strictly hyperbolic, have signed characteristic speeds, enjoy a sector condition on their eigenvectors, and have strictly convex rarefaction curves (the Smoller-Johnson condition), along with a mild assumption on the choice of entropy. We are also able to relax many of these assumptions, including the Smoller-Johnson condition, if we have additional control on the entropy, entropy-flux pair. As an immediate consequence of our results, we can consider all genuinely nonlinear systems of the form
%\begin{align}
%    \begin{cases}\label{generalpsystemintro}
%    \partial_t v+\partial_x \eta_u(v,u)=0\\
%    \partial_t u +\partial_x \eta_v(v,u)=0,
%    \end{cases}
%\end{align}
%which also verify Smoller-Johnson, where $\eta\colon \mathbb{R}^2\to\mathbb{R}$ is strictly convex. The entropy for the system is given by $\eta$ itself, with entropy-flux $q(v,u)=\eta_u(v,u)\eta_v(v,u)$. For more on this system, see \Cref{sec:dirapp}. 
%We in fact prove much more: we show nonexistence of $T_4$ for two general classes of systems (including \eqref{generalpsystemintro} as an application), in either Eulerian or Langrangian coordinates -- see \Cref{main_theorem1}, \Cref{main_theorem}, and \Cref{changeofCoordsnoT4}. 

\subsection{Hyperbolicity, entropies, shock waves}\label{sec:shocks}

Let us now introduce some basic concepts from the theory of hyperbolic conservation laws. First we discuss standard conditions in the theory of systems of hyperbolic conservation laws, which can be stated as local (differential) constraints. Then, we will need to consider global conditions as well. 

\subsubsection{Admissible systems}

Following \cite{K-K} for our notation, the system  \eqref{system} is called \emph{strictly hyperbolic} if for every $U\in\mathcal{V}$ ($U=(u_1,u_2)$) the matrix $Df$ has two distinct real eigenvalues $\lambda_1(U)$ and $\lambda_2(U)$. We assume $\lambda_1<\lambda_2$. The eigenvalues are called \emph{characteristic speeds}. The corresponding eigenvectors $r_1$ and $r_2$ are then linearly independent. The eigenvectors are vector fields, and the integral curves $R^1$ and $R^1$ of these respective direction fields are called \emph{rarefaction curves}. We denote the curve $R^k$ which passes through a point $U_0$ in $U$-space by $R^k_{U_0}$.

The system \eqref{system} is said to be \emph{genuinely nonlinear} if each characteristic speed $\lambda_i$ varies strictly monotonically along the rarefaction curves $R^i$ of the same family, i.e. $r_i\cdot\nabla \lambda_i\neq0$. We choose the following normalizations:
\begin{align}
\abs{r_i}=1,\hspace{.4in} r_i\cdot\nabla\lambda_i>0, \hspace{.4in} i=1,2,
\end{align}
and similarly for the left eigenvectors $l_1,l_2$ of $Df$,
\begin{align}
\abs{l_i}=1,\hspace{.4in} l_i r_i>0, \hspace{.4in} i=1,2.
\end{align}
Remark that $l_ir_j=0$ for $i\neq j$.

We will sometimes use the \emph{Smoller-Johnson condition}, which can be expressed in terms of the eigenvectors:
\begin{align}
   l_j D^2 f(r_i ,r_i)>0, \hspace{.4in} j\neq i, \label{SJcond}
\end{align}
where $D^2 f(r_i ,r_i)=(r_i\cdot \nabla Df)r_i$ denotes the second Fréchet derivative of $f$ in the direction of $r_i$.

Condition \eqref{SJcond} was introduced by Smoller and Johnson \cite{MR236527}. They showed that it is in fact equivalent to the Glimm-Lax shock interaction condition \cite{MR0265767}. Geometrically, \eqref{SJcond} says that all rarefaction curves of both families are strictly convex. Indeed, if $\abs{\kappa_i}$ is the curvature of $R^i$, then
\begin{align}\label{kappa_formula}
    \kappa_i=\frac{l_j D^2 f(r_i,r_i)}{\lambda_i - \lambda_j},
\end{align}
and $\kappa_1<0,\kappa_2>0$ due to strict hyperbolicity and \eqref{SJcond}\footnote{To study the compactness of exact solutions in $BV_{\text{loc}}$ with large oscillation, DiPerna requires \cite[p.~38]{MR684413} that the system \eqref{system} admit a coordinate system of quasi-convex Riemann invariants in order to guarantee the existence of a strictly convex entropy in the large. We remark this is related to the Smoller-Johnson condition \eqref{SJcond}: A Riemann invariant is quasi-convex if and only if the system has convex rarefaction curves. On the other hand, \eqref{SJcond} ensures that the rarefaction curves are \emph{strictly} convex.}. Thus, we see that the $R^1$ curves bend toward $-l_2$ (away from $r_2$), while the $R^2$ curves bend toward $l_1$ (and $r_1$) (see \cite[p.~448]{K-K} for details).

We then have the definition for a large class of systems we will consider in this paper,

\begin{definition}[Admissible systems]\label{admsystem}
A $2\times2$ system of conservation laws which is strictly hyperbolic, genuinely nonlinear, and verifies the Smoller-Johnson condition \eqref{SJcond} will be called \emph{admissible}.
\end{definition}

Remark that we do not have to look far for a system which does not verify the Smoller-Johnson condition: two coupled copies of Burgers (see \eqref{twoBurgers}) have straight rarefaction curves and thus do not verify Smoller-Johnson. We will therefore introduce a different set of hypotheses (Hypotheses $(\mathcal{H}2)$ below), which does not need the Smoller-Johnson condition.

\subsubsection{Shock solutions and the Hugoniot locus}\label{sec:Hugoniotlocus}

For left- and right-hand states $U_L,U_R\in\mathcal{V}$, and $\sigma\in\mathbb{R}$, the function
\begin{align}
U(x,t)\coloneqq
  \begin{cases}
    U_L       & \quad \text{if } x<\sigma t,\\
    U_R  & \quad \text{if } x>\sigma t,
  \end{cases}\label{shocksol}
\end{align}
is a weak solution to the system \eqref{system} if and only if the triple $(U_L,U_R,\sigma)$ verify the Rankine-Hugoniot jump condition,
\begin{align}\label{RHcond}
    \sigma(u_L-u_R)=f(u_L)-f(u_R),
\end{align}
in which case we call \eqref{shocksol} a \emph{shock solution} (and $(U_L,U_R)$ is a \emph{shock}) with \emph{shock speed} $\sigma$. The solution \eqref{shocksol} is entropic for the entropy $\eta$ if and only if
\begin{align}\label{entropicshock}
    q(U_R)-q(U_L)\leq \sigma (\eta(U_R)-\eta(U_L)).
\end{align}

%Often it is the case that a shock $(U_L,U_R)$ (or $(U_R,U_L)$) will verify \eqref{entropicshock} with a strict inequality. In particular, in this paper we are largely concerned with the case
%\begin{align}\label{entropicshockneq}
%    q(U_R)-q(U_L) \neq \sigma (\eta(U_R)-\eta(U_L)).
%\end{align}

At the level of the set $\mathcal{K}_{f,\eta,q}$ (see \eqref{manifold_K}), $(U_L,U_R)$ verifying the Rankine-Hugoniot condition can be equivalently stated as
\begin{equation*}
  \textrm{det}_{12}(G(U_L)-G(U_R))=0,
\end{equation*}
where recall \eqref{matrixmap} and that we denote by $\det_{12}$ the subdeterminant consisting of the first two rows. On the other hand observe that if the shock verifies \eqref{entropicshock} with strict inequality, then $G(U_L)-G(U_R)$ is not rank-one.

From the Rankine-Hugoniot jump condition, at a point $U_0$ we can define the \emph{Hugoniot locus} as
\begin{align}\label{locus}
    H(U_0)\coloneqq \{\hspace{.04in}U \hspace{.04in}|\hspace{.04in} \exists \sigma : \sigma(U-U_0)=f(U)-f(U_0)\}.
\end{align}
For any $2\times 2$ strictly hyperbolic system of conservation laws, locally around $U_0$, the set $H(U_0)$ consists of the union of two smooth curves $S^1_{U_0}$ and $S^2_{U_0}$,  each passing through the point $U_0$ (for details, see for example \cite[Theorem 2, p.~583]{MR1625845}). These are the \emph{shock curves} of the Hugoniot locus. For many physical systems, the shock curves will exist globally. We will only consider such systems. In this paper, we will always smoothly parameterize $S^1_{U_0}$ and $S^2_{U_0}$ as follows: $S^1_{U_0}=S^1_{U_0}(s)$ and $S^2_{U_0}=S^2_{U_0}(s)$ with $S^1_{U_0}(0)=S^2_{U_0}(0)=U_0$. We also choose a smooth parameterization for the $\sigma$, i.e. $\sigma^1_{U_0}$ and $\sigma^2_{U_0}$ such that
\begin{align}
    \sigma^k_{U_0}(s)(S^k_{U_0}(s)-U_0)=f(S^k_{U_0}(s))-f(U_0),
\end{align}
for $k=1,2$. 

\hspace{.1in}

In \Cref{sec:hyp} below, we will introduce two sets of hypotheses on the system \eqref{system}, in order to cover a large range of physical systems. 
 For such systems we will be able to conclude that \eqref{entropicshock} always holds with strict inequality. Consequently: 
\begin{proposition}[No rank-one connections in $\mathcal{K}_{f,\eta,q}$]\label{norankone}
Consider any system \eqref{system} with global shock curves, verifying the global Liu entropy condition, and endowed with a strictly convex entropy $\eta$ with associated  entropy-flux $q$. Then the set $\mathcal{K}_{f,\eta,q}$ does not contain rank-one connections.
\end{proposition}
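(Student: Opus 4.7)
The plan is to convert the existence of a rank-one connection in $\mathcal{K}_{f,\eta,q}$ into a non-trivial shock of the system with \emph{vanishing} entropy dissipation in \eqref{entropicshock}, and then rule out such shocks using the Liu entropy condition together with strict convexity of $\eta$.

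First I would unpack the rank-one condition. Suppose $G(U_L) - G(U_R)$ has rank at most one with $U_L \neq U_R$. Every $2\times 2$ subdeterminant then vanishes. The identity $\det_{12}(G(U_L)-G(U_R))=0$ is precisely the Rankine-Hugoniot condition \eqref{RHcond}, producing a shock speed $\sigma$ with $U_R\in H(U_L)$. Since $U_L-U_R\neq 0$, the first two rows of the difference matrix span a line, and the vanishing of $\det_{13}$ and $\det_{23}$ forces the third row onto this line, giving
\begin{equation*}
q(U_L) - q(U_R) \;=\; \sigma\bigl(\eta(U_L) - \eta(U_R)\bigr).
\end{equation*}
Thus a rank-one connection is exactly a non-trivial shock realizing equality in \eqref{entropicshock}. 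The rest of the proof reduces to ruling out such a shock.

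Next I would parameterize one branch of $H(U_L)$ as $s\mapsto S^k_{U_L}(s)$ with shock speed $\sigma^k_{U_L}(s)$, and introduce the scalar entropy-dissipation function
\begin{equation*}
D_k(s) \;:=\; q(S^k_{U_L}(s)) - q(U_L) - \sigma^k_{U_L}(s)\bigl(\eta(S^k_{U_L}(s))-\eta(U_L)\bigr),
\end{equation*}
so that $D_k(0) = 0$ and the vanishing of entropy production at the target shock corresponds to $D_k(s)=0$ at the target parameter. Differentiating $D_k$, invoking the compatibility relation \eqref{entropy_compat_convention912}, and using the differentiated Rankine-Hugoniot relation $(Df - \sigma^k_{U_L} I)(S^k_{U_L})' = (\sigma^k_{U_L})'\bigl(S^k_{U_L} - U_L\bigr)$ to eliminate $(S^k_{U_L})'(s)$, gives after simplification the clean identity
\begin{equation*}
D_k'(s) \;=\; (\sigma^k_{U_L})'(s)\,B_\eta\bigl(S^k_{U_L}(s),\,U_L\bigr),
\end{equation*}
where $B_\eta(V,W) := \nabla\eta(V)\cdot(V-W) - (\eta(V)-\eta(W))$ is the Bregman divergence associated with $\eta$. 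Strict convexity of $\eta$ makes $B_\eta(S^k_{U_L}(s),U_L)$ strictly positive whenever $s\neq 0$.

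Finally, the global Liu entropy condition pins down a definite, strict sign for $(\sigma^k_{U_L})'(s)$ along the admissible branch of the shock curve away from $s=0$. Combined with the positivity of the Bregman term, this gives $D_k'(s)$ a strict sign on the admissible branch; integrating from $s=0$, where $D_k(0)=0$, yields $D_k(s)\neq 0$ for every non-trivial admissible shock. This precludes equality in \eqref{entropicshock} and therefore excludes rank-one connections in $\mathcal{K}_{f,\eta,q}$. I expect the main difficulty to lie not in the local derivation above, which is essentially a reorganization of classical identities, but rather in the global bookkeeping: one must match the sign convention of the Liu condition to the chosen parametrization of each branch of $S^k_{U_L}$, verify that $(\sigma^k_{U_L})'$ has no interior zero on the admissible branch, and confirm that the hypotheses set out in \Cref{sec:hyp} (global existence of the shock curves together with a globally-valid form of the Liu condition) are genuinely strong enough to propagate the strict-sign conclusion all the way along each full branch rather than only in a neighborhood of $U_L$.
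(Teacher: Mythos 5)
Your proposal is correct and follows essentially the same route as the paper: reduce a rank-one connection to a nontrivial shock with equality in \eqref{entropicshock}, then combine the Liu condition with strict convexity of $\eta$ via Lax's entropy dissipation formula (\Cref{diss_lemma}), whose differential form is exactly your identity $D_k'(s)=(\sigma^k_{U_L})'(s)\,B_\eta(S^k_{U_L}(s),U_L)$, your Bregman divergence being the paper's relative entropy $\eta(U_L|S^k_{U_L}(s))$. The ``global bookkeeping'' you flag is handled in the paper simply by the stated hypotheses: the Liu condition gives $(\sigma^k_{U_L})'\neq 0$, hence a constant sign on each connected branch, which is all the integration from $s=0$ requires.
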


\begin{remark}
For the \emph{Liu entropy condition}, see \cite[Section 8.4]{dafermos_big_book}. We also introduce it below (see Hypotheses $(\mathcal{H}2)$ \eqref{item1}). Roughly speaking, it says that the shock speed is varying monotonically along the shock curves in the Hugoniot locus.  For a broad class of conservation laws, it is possible to show the equivalence of genuine nonlinearity and the strict and global Liu entropy condition (as suggested by Liu \cite{MR369939}).
\end{remark}

The proof of \Cref{norankone} is in \Cref{sec:norankone}. Remark \Cref{norankone} generalizes one direction of the result \cite[Proposition 4]{MR4144350}, which applies only to the $p$-system. A key idea in this paper is to use global knowledge of the geometry of shock curves and the Hugoniot locus $H(U_0)$ to divide state space into four connected components, where in each component $\det_{12}(G(U)-G(U_0))$ has constant sign (recall \eqref{matrixmap}), and to combine this with \Cref{TN_change_sign} to derive a contradiction.

\subsection{Genuine nonlinearity and the Liu entropy condition}
\hspace{.04in}

One question we were not able to answer in this paper is
\begin{question}\label{questionc}
Does there exist a strictly hyperbolic $2\times 2$ system \eqref{system} with flux $f$, endowed with a strictly convex entropy $\eta$, and associated entropy-flux $q$, such that the Liu entropy condition holds along all shock curves, and the  constitutive set for this system (see \eqref{manifold_K}) admits a $T_4$ configuration?
\end{question}

While the present paper is able to answer in the negative for a large variety of systems, a proof of a negative result for a general system remains elusive. We conjecture the answer is \emph{no}.

\Cref{questionc} is of considerable interest. If we could construct an $f$, $\eta$, and $q$ that would allow for solutions via convex integration-type techniques, as discussed above, such solutions would not have any sort of strong traces, and in particular would exhibit  oscillation of order one in every open subset of space-time (see e.g. \cite{MR2048569}).

For scalar conservation laws in one space dimension, genuine nonlinearity, coupled with just a \emph{single strictly convex entropy}, causes solutions with arbitrary $L^\infty$ initial data to be instantaneously regularized to $BV_{\text{loc}}$, which in particular means the solutions have, for each fixed time, left and right limits in space (see \cite{panov_uniquness,delellis_uniquneness,2017arXiv170905610K}).

%Further work \cite{MR2017887}, following the initial work of Vasseur \cite{MR1869441}, considers the scalar conservation laws in multiple space dimensions, for solutions entropic for the large family of Kruzhkov entropies (see \cite{MR0267257}).  Similarly to the one space dimension case, the authors are able to show that the genuine nonlinearity of the flux, along with control on the dissipation rate of entropy of solutions, has a regularizing effect preventing extreme oscillations in solutions, and in particular entropic solutions have traces -- see the related discussion \cite[p.~138-140]{MR2017887}. However, unlike in the one space dimension case, to gain this regularity the works \cite{MR2017887,MR1869441} utilize the  kinetic formulation of conservation laws \cite{MR1284790}, and this necessitates the use of infinitely many entropies.

On the other hand, in the case of \emph{systems} of conservation laws in one space dimension, the situation is not so clear cut. In an active research program Vasseur and collaborators aim to show selection criteria to get uniqueness of solutions. The program considers systems having properties related to the Liu entropy condition, with a strictly convex entropy. The solutions under consideration are not assumed to be small in $BV$, and may in fact be large $L^2$ perturbations. However, they are assumed to have \emph{strong traces}, a property that is certainly not satisfied by solutions constructed via convex integration. For a survey, see \cite{MR3475284} -- and see also \cite{MR4487515,Leger2011}. 

Of particular note, our work in the present paper applies to the equation of isentropic flow of an ideal gas (see \eqref{isentropicEuler}) with $\gamma=3$ -- a genuinely nonlinear system. This particular system has been the focus of intense study. In \cite{MR1720782}, Vasseur is able to show that solutions to the system are regular in time. Moreover, interesting recent work by Golding \cite{golding} proves that the system exhibits additional regularizing effects on solutions. In \cite{golding}, it is shown that this particular system with $\gamma=3$ has a regularizing effect on solutions similar to the genuinely nonlinear, multidimensional scalar case.  The result gives the first example of a non-degenerate system, i.e. not Temple class, which has the property that $L^\infty$ initial data is regularized, leading to entropic solutions with substantial regularity. It should be noted that the results of Golding \cite{golding} do not quite manage to show that solutions automatically regularize enough to have the strong traces. Moreover, the works \cite{MR1720782,golding} utilize the kinetic formulation, and they require an infinite family of entropies to derive their results. 

To conclude this digression, a positive answer to \Cref{questionc} would suggest that the works \cite{MR1720782,golding} cannot extend to the systems case with a single entropy.  It would also suggest the hypothesis that solutions have strong traces is truly necessary and not merely a technical condition. 

\subsection{Plan for the paper}

The plan for the rest of the paper is the following. In \Cref{sec:hyp}, we introduce the two sets of hypotheses on the system \eqref{system} which we will work with in this paper (Hypotheses $(\mathcal{H}1)$ and $(\mathcal{H}2)$) and state our main results (\Cref{main_theorem1}, \Cref{main_theorem}, and \Cref{changeofCoordsnoT4}). In \Cref{sec:systemsverify}, we will check that various systems verify the conditions of one of these sets of hypotheses or the other. Finally, in \Cref{sec:proofshyp}, we present the proofs that the Hypotheses $(\mathcal{H}1)$ or $(\mathcal{H}2)$ imply nonexistence of $T_4$.

 %and contain the point $u^0$ on the `inside' because $\tilde\eta$ is strictly convex and $\nabla\tilde\eta(u^0)=0$. 

%Thus, each level set of $\tilde\eta$ intersects each level set of \eqref{hyperbola_fun} at most 4 times (because a concave function and a convex function intersect at most twice). 

%\textbf{Notation:} For a matrix $A\in\mathbb{R}^{p\times q}$, we denote the element  of $A$ in the $i$th row and $j$th column as $A_{i,j}$.

\section{Main Theorems: Nonexistence of $T_4$ under Hypotheses $(\mathcal{H}1)$ or $(\mathcal{H}2)$}\label{sec:hyp}

In this section we will introduce two sets of Hypotheses ($(\mathcal{H}1)$ and $(\mathcal{H}2)$) which we will consider for the system \eqref{system}. We introduce these particular hypotheses for three reasons: (1) they use conditions which are used in the literature, see for example \cite{K-K,dafermos_big_book,serre_book}, (2) physical systems will fall under one set of the hypotheses or the other (see \Cref{sec:systemsverify}), and (3) under either Hypotheses $(\mathcal{H}1)$ or $(\mathcal{H}2)$, the constitutive set $\mathcal{K}_{f,\eta,q}$ will not contain any rank-one connections (see \Cref{norankone}, introduced earlier).

\subsection{Sector condition}

The class of admissible systems (\Cref{admsystem}) is a very general class. Systems arising from physics have additional properties which we will need to utilize to deal with the differential inclusion \eqref{dffinclusion}.

\begin{definition}[\protect{Sector condition \cite[p.~472]{K-K}}]\label{sector_def}
An admissible system verifies the \emph{sector condition} if there exists two fixed linearly independent vectors $w_1$ and $w_2$ such that for all $U$ the following inequalities hold: 
\begin{align*}
r_1(U)\cdot w_1 <0,& \hspace{.5in} r_1(U)\cdot w_2>0 \\
r_2(U)\cdot w_i >0,& \hspace{.5in} i=1,2. 
\end{align*}
\end{definition}

The sector condition is a global condition. In practice, for many systems it may be easier to check a local condition called the \emph{opposite variation condition} \cite[p.~450]{K-K}.  A large class of conservation laws enjoy opposite variation, including all nonlinear wave equations. For admissible systems in which rarefaction curves of opposite families always intersect \cite[p.~451]{K-K}, opposite variation implies the sector condition -- see \cite[Theorem 7.5]{K-K}. As noted by \cite[p.~451]{K-K}, most conservation laws possess this intersection property. However, Smoller \cite{MR247283} has produced examples of systems which do not.

%We have the following characterization of the sector condition.
%\begin{lemma}[\protect{Characterization of sector condition \cite[Theorem 7.4]{K-K}}]\label{sector_characterization}
%For a system \eqref{system}, the sector condition holds if and only if 
%\begin{align*}
%    r_1(U_1)\neq \pm r_2(U_2)
%\end{align*}
%for every pair of points $U_1$ and $U_2$.
%\end{lemma}

\hspace{.1in}

Let us now consider an admissible system verifying the sector condition (\Cref{sector_def}) and fix a strictly convex set $K$ with smooth boundary. We can partition the boundary of $K$ into four connected pieces: $\Romanbar{I}$, $\Romanbar{II}$, $\Romanbar{III}$ and $\Romanbar{IV}$, with the properties that
\begin{align}
    \begin{cases}\label{decomp_K_def}
        \bullet \text{ } \partial K=\Romanbar{I}\cup \Romanbar{II} \cup \Romanbar{III} \cup \Romanbar{IV},\\
        \bullet \text{ For any $U\in\Romanbar{I}$, $r_2(U)$ points inward into $K$,}\\
        \bullet \text{ For any $U\in\Romanbar{II}$, $r_1(U)$ points inward into $K$,}\\
        \bullet \text{ For any $U\in\Romanbar{III}$, $-r_2(U)$ points inward into $K$,}\\
        \bullet \text{ For any $U\in\Romanbar{IV}$, $-r_1(U)$ points inward into $K$.}
    \end{cases}
\end{align}
If $K$ is noncompact, then $\Romanbar{I}$, $\Romanbar{II}$, $\Romanbar{III}$ and/or $\Romanbar{IV}$ may be the empty set.

Such a partition is not unique, but will always exist. Indeed, one possible choice is given by
\begin{equation}
\begin{aligned}\label{canonical_decomp}
    \Romanbar{I}^{\mathrm{o}} \coloneqq & \Big\{U\in\partial K \hspace{.04in}|\hspace{.04in} \nu(U) \in W^{--}\Big\},\\
    \Romanbar{II}^{\mathrm{o}} \coloneqq & \Big\{U\in\partial K \hspace{.04in}|\hspace{.04in} \nu(U) \in W^{+-}\Big\},\\
    \Romanbar{III}^{\mathrm{o}} \coloneqq & \Big\{U\in\partial K \hspace{.04in}|\hspace{.04in} \nu(U) \in W^{++}\Big\},\\    
    \Romanbar{IV}^{\mathrm{o}} \coloneqq & \Big\{U\in\partial K \hspace{.04in}|\hspace{.04in} \nu(U) \in W^{-+}\Big\},
\end{aligned}
\end{equation}
where $\nu(U)$ is the outward-pointing unit normal vector to the boundary of $K$ at the point $U$ and 
\begin{align*}
    W^{++}\coloneqq & \Big\{ \beta_1 w_1+\beta_2 w_2 \,|\, \beta_1>0, \beta_2>0\Big\},\\
    W^{+-}\coloneqq & \Big\{ \beta_1 w_1+\beta_2 w_2 \,|\, \beta_1>0, \beta_2<0\Big\},\\
    W^{-+}\coloneqq & \Bigg\{ \beta_1 w_1+\beta_2 w_2 \,|\, \beta_1<0, \beta_2>0\Big\},\\
    W^{--}\coloneqq & \Bigg\{ \beta_1 w_1+\beta_2 w_2 \,|\, \beta_1<0, \beta_2<0\Big\}.
\end{align*}
Here $w_1, w_2$ are the fixed vectors given by the sector condition in Definition \ref{sector_def}. Observe that $\Romanbar{I}^{\mathrm{o}}$,\dots,$\Romanbar{IV}^{\mathrm{o}}$ are by definition relatively open connected subsets of $\partial K$; to define $\Romanbar{I}$,\dots$\Romanbar{IV}$ one merely needs to assign the boundary points in a cyclic manner.

The choice \eqref{canonical_decomp} is presented to show that a decomposition with properties \eqref{decomp_K_def} always exists. However, in practice such decomposition is not unique (indeed, the choice of $w_1,w_2$ may not be unique), and we will make use of this flexibility to help ensure that our hypotheses (e.g.~Hypotheses $(\mathcal{H}1)$ \eqref{item51}) will hold.

\subsection{Hypotheses $(\mathcal{H}1)$ and $(\mathcal{H}2)$}

Consider an entropy $\eta$ and entropy-flux $q$ for the system \eqref{system}. For $c=(c_1,c_2)\in\mathbb{R}^2$, the following functions will play an important role in Hypotheses $(\mathcal{H}1)$ and $(\mathcal{H}2)$:
\begin{equation}
\begin{aligned}\label{tdef}
\tilde{\eta}(U)&\coloneqq \eta(U)+c\cdot U,\\
\tilde{q}(U)&\coloneqq q(U)+c\cdot f(U).
\end{aligned}
\end{equation}
Observe that $\tilde \eta, \tilde q$ are also an entropy, entropy-flux pair for our system, and any weak solution of \eqref{system}-\eqref{entropy_ineq} automatically satisfies \eqref{entropy_ineq} also with $\tilde\eta,\tilde q$.

We then consider the following hypotheses on the system \eqref{system}.

\textbf{Hypotheses $(\mathcal{H}1)$ on the system}
\begin{enumerate}[(i)]
\item The system is admissible (\Cref{admsystem}). \label{item11}
\item The system verifies the sector condition (\Cref{sector_def}) with two vectors $w_1,w_2\in\mathbb{R}^2$.\label{item21}
\item The characteristic speeds verify $\lambda_1(U) \leq 0 \leq \lambda _2(U)$ for all $U$.\label{item61}
\item The system is endowed with a strictly convex entropy $\eta$. \label{item41}
\item For all $c_1,c_2\in\mathbb{R}$, each level set $\{\tilde\eta=C\}$ (for $C\in\mathbb{R}$ which is not a global minimum of $\tilde\eta$) can be decomposed into four pieces as in \eqref{decomp_K_def} with the property that among the four sets $\tilde q (\Romanbar{I})$, $\tilde q (\Romanbar{II})$, $\tilde q (\Romanbar{III})$, and $\tilde q (\Romanbar{IV})$, the only nontrivial intersections are between the pairs $\tilde q (\Romanbar{I}),\tilde q (\Romanbar{III})$ and $\tilde q (\Romanbar{II}),\tilde q (\Romanbar{IV})$. Remark that for a set $K\subset\mathcal{V}$, we define $\tilde q(K)\coloneqq \{\hspace{.04in} \tilde q(U) \hspace{.04in}| \hspace{.04in}U\in K\}$. \label{item51}
\end{enumerate}

\begin{remark}

\hspace{.1in}

\begin{itemize}
    \item The level sets of $\tilde\eta$ are convex sets (which are potentially unbounded). Furthermore, since $\tilde\eta$ is strictly convex, it has at most one critical point and hence for any $C>\inf_{\R^2}\tilde\eta$ the level set $\{\tilde\eta=C\}$ is a one-dimensional manifold which is the boundary of the convex set $\{\tilde\eta<C\}$. 

   % \item We have the flexibility to choose any decomposition of the level set $\{\tilde\eta=C\}$ which allows us to verify both \eqref{decomp_K_def} and Hypotheses $(\mathcal{H}1)$ \eqref{item51}. 
    
    %\item We can consider each of the sets $\Romanbar{I}$, $\Romanbar{II}$, $\Romanbar{III}$ and $\Romanbar{IV}$ as subsets of the one-dimensional manifold $\{\tilde\eta=C\}$. Viewed as subsets of the one-dimensional manifold $\{\tilde\eta=C\}$, each of the interiors of the four sets,  $\Romanbar{I}^{\mathrm{o}}$, $\Romanbar{II}^{\mathrm{o}}$, $\Romanbar{III}^{\mathrm{o}}$, and $\Romanbar{IV}^{\mathrm{o}}$ has at most two boundary points in this topology.  If one of these boundary points is in the closure of two of the sets $\Romanbar{I}$, $\Romanbar{II}$, $\Romanbar{III}$ and $\Romanbar{IV}$, we can make a choice about which of the two sets the boundary point should belong to. This is because the $\Romanbar{I}$, $\Romanbar{II}$, $\Romanbar{III}$ and $\Romanbar{IV}$ are connected. This is one of the ways we have flexibility to ensure that $\Romanbar{I}$, $\Romanbar{II}$, $\Romanbar{III}$ and $\Romanbar{IV}$ verify Hypotheses $(\mathcal{H}1)$ \eqref{item51}. 
    
    \item Item \eqref{item51} is a natural assumption on the entropy, entropy-flux pair. Indeed, assume that the global minimum of $\tilde\eta$ exists and consider a close-by level set of $\tilde\eta$. Since $\nabla^2\tilde q = \nabla^2 \tilde\eta Df +\nabla\tilde\eta D^2f$, at the global minimum we have $\nabla\tilde\eta=0$, $\nabla \tilde q=0$ and $\nabla^2\tilde q=\nabla^2 \tilde\eta Df$. Thus, for any general system with a strictly convex entropy and strictly signed characteristic speeds (one positive, one negative -- compare with Hypotheses $(\mathcal{H}1)$ \eqref{item61}), $\det \nabla^2\tilde q<0$ at this point and we have a saddle point, with level sets locally looking like hyperbolas. Remark that at the saddle point,
    \begin{align}
    r_i^\top \nabla^2\tilde q r_i = \lambda_i r_i^\top\nabla^2 \tilde\eta r_i,
\end{align}
for $i=1,2$. Thus, recalling that $\nabla \tilde q=0$ and that $\eta$ is strictly convex (which makes $\nabla^2 \tilde\eta$ positive-definite) we see that $\tilde q$ is concave in the direction of $r_1$ and convex in the direction of $r_2$. Thus, we can zoom in at this fixed saddle point, and we can choose vectors $w_1$ and $w_2$ (as in \Cref{sector_def}) such that $r_1$ and $r_2$, at this fixed point, meet the sector condition \emph{and} we have a decomposition of the local level sets of $\tilde\eta$ as in Hypotheses $(\mathcal{H}1)$ \eqref{item51}. We conclude that locally we will always have Hypotheses $(\mathcal{H}1)$ \eqref{item51}. By requiring Hypotheses $(\mathcal{H}1)$ \eqref{item51}, we are simply asking that this condition hold not only locally, but also globally for our entropy, entropy-flux pair.
\end{itemize}
\end{remark}

Our first main result is the following:
\begin{theorem}[Nonexistence of $T_4$ for systems verifying Hypotheses $(\mathcal{H}1)$] \label{main_theorem1}
For any conservation law \eqref{system} with entropy $\eta$ and entropy-flux $q$ and verifying Hypotheses $(\mathcal{H}1)$, the corresponding constitutive set \eqref{manifold_K} does not contain a $T_4$ configuration (see \Cref{T_N_def}).
\end{theorem}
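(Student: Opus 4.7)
The plan is to argue by contradiction, assuming $\{X_i\}_{i=1}^4 \subset \mathcal{K}_{f,\eta,q}$ forms a $T_4$ configuration with $X_i = G(U_i)$, and then to combine the sign-change property from \Cref{TN_change_sign} with a global geometric decomposition of state space along the lines suggested in the excerpt after \Cref{norankone}.

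\textbf{Hugoniot decomposition.} For each fixed base state $U_0 \in \mathcal{V}$, the zero set of the map $U\mapsto\det_{12}(G(U_0)-G(U))$ coincides with the Hugoniot locus $H(U_0)=S^1_{U_0}\cup S^2_{U_0}$. Using admissibility of the system together with the sector condition \eqref{item21} and the sign condition on the characteristic speeds \eqref{item61}, I would first show that the two global shock curves meet transversally only at $U_0$, so that $\mathcal{V}\setminus H(U_0)$ has exactly four connected components on which $\det_{12}(G(U_0)-G(\cdot))$ has constant alternating sign. For each such $U_0$ I would then choose the shift vector $c$ so that the modified entropy $\tilde\eta$ from \eqref{tdef} is minimised at $U_0$ (that is, $c=-\nabla\eta(U_0)$). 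Strict convexity of $\tilde\eta$ foliates $\mathcal{V}\setminus\{U_0\}$ by convex level sets, each decomposed into four arcs $\Romanbar{I},\ldots,\Romanbar{IV}$ as in \eqref{decomp_K_def}, and item \eqref{item51} restricts which of the four $\tilde q$-images can intersect. Noting that passing from $(\eta,q)$ to $(\tilde\eta,\tilde q)$ shifts $\det_{13}$ by $c_2\det_{12}$ and $\det_{23}$ by $-c_1\det_{12}$, I would then show that each of the four Hugoniot components of $\mathcal{V}\setminus H(U_0)$ carries a globally well-defined sign triple $(\mathrm{sgn}\,\det_{12},\mathrm{sgn}\,\det_{13},\mathrm{sgn}\,\det_{23})$ determined by the rarefaction-field geometry.

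\textbf{Combinatorial contradiction.} Applying \Cref{TN_change_sign} at each of the four centres $U_i$ and for each of the three subdeterminants yields twelve sign-change constraints on the assignment of the three non-centre states among the four Hugoniot components of $\mathcal{V}\setminus H(U_i)$. I would then verify that these constraints are combinatorially incompatible for any choice of the four states and any such assignment, producing the desired contradiction.

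\textbf{Main obstacle.} The delicate step is making the sign analysis of $\det_{13}$ and $\det_{23}$ \emph{global}, not just local near each $U_0$. Close to $U_0$ the signs are determined by a direct linear-algebraic computation using strict convexity of $\eta$ and the Smoller-Johnson condition, but extending this to all of $\mathcal{V}\setminus H(U_0)$ relies precisely on the restricted-overlap condition \eqref{item51} to prevent the $\tilde q$-values from wrapping around and introducing spurious sign changes in the interior of Hugoniot regions. Once this global rigidity is established, the combinatorial contradiction in the final step should reduce to a finite, though not entirely trivial, case analysis over which quadrant each $U_j$ lies in relative to each $U_i$.
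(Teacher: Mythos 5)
Your outline diverges from the paper's argument at the decisive point, and the divergence is a genuine gap rather than an alternative route. The paper's proof has one algebraic hinge that your proposal never supplies: using the $T_4$ structure itself ($\sum_i C_i=0$ together with a rank argument on the concatenated matrix $B-\Pi$) one shows in \Cref{lin_combo} that the third row of $B-\Pi$ is a linear combination of the first two; the coefficients of that combination give the constants $c_1,c_2$ in \eqref{tdef}, and with this choice \emph{all four} states $U_1,\dots,U_4$ land on one common level set of $\tilde\eta$ and one common level set of $\tilde q$. Your choice $c=-\nabla\eta(U_0)$, made separately at each center, does nothing of the kind, and it discards the only step where the $T_4$ equations beyond \Cref{TN_change_sign} are used. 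Without the common-level-set reduction, Hypotheses $(\mathcal{H}1)$ \eqref{item51} --- which constrains the $\tilde q$-images of the four arcs of a \emph{single} $\tilde\eta$-level set --- has no grip on the configuration, and the quadrant bookkeeping you propose is essentially unconstrained: a generic quadruple of states realizes a sign change of $\det_{12}$ at every center, so no purely combinatorial incompatibility can be extracted at that level of generality. (Note also that \Cref{lin_combo} needs the non-perverse Hugoniot locus condition, obtained under $(\mathcal{H}1)$ through \Cref{h1h2equiv}, together with Lax's dissipation formula \Cref{diss_lemma}, to handle the degenerate collinear case.)

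The second problem is the claimed ``globally well-defined sign triple'' on the components of $\mathcal{V}\setminus H(U_0)$. Only $\det_{12}(G(U_0)-G(\cdot))$ vanishes exactly on the Hugoniot locus; the zero sets of $\det_{13}$ and $\det_{23}$ are different curves, so there is no reason their signs are constant on Hugoniot components, and condition \eqref{item51} does not deliver such constancy --- you correctly flag this as the delicate step, but it is not merely delicate, it is unsupported and not how \eqref{item51} functions in the paper. In the actual proof the contradiction is obtained with $\det_{12}$ alone: after the level-set reduction one counts critical points of $\tilde q$ on $\{\tilde\eta=C\}$ via Lagrange multipliers and the Smoller--Johnson condition (at most one extremum on each of the two relevant arcs), and then a genuinely geometric case analysis --- using \Cref{geo_facts} (star-shapedness, Lax E-condition, Liu condition), \Cref{ordering_q}, \Cref{intersection_lemma}, and the dragging argument of \Cref{curve_perturb} --- shows that some center $U_i$ has the other three states in a single component of $\mathcal{V}\setminus H(U_i)$, contradicting \Cref{TN_change_sign}. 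To repair your proposal you would need to (a) prove the linear-algebra reduction of \Cref{lin_combo} (or an equivalent), and (b) replace the asserted twelve-constraint combinatorics by an argument that actually exploits the shock-curve geometry, since the sign information for $\det_{13},\det_{23}$ you hope to use is not available on Hugoniot components.
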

\begin{remark}
The result also holds when the system \eqref{system} verifies the Smoller-Johnson condition \eqref{SJcond} but with the sign flipped, i.e. $l_j D^2 f(r_i ,r_i)<0$.
\end{remark}

We postpone the proof of \Cref{main_theorem1} to \Cref{sec:proofmain_theorem1}.

\vspace{.1in}

For systems not falling in the framework of Hypotheses $(\mathcal{H}1)$, we start with a very weak condition on the Hugoniot locus, which aims to rule out some strange behavior of shock curves which is not exhibited by any physical systems we are aware of. See \Cref{perversefig} for an example of the type of behavior we are trying to avoid.

\begin{figure}[tb]
 % \centering
      \includegraphics[width=.5\textwidth]{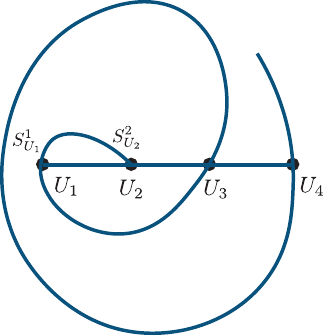}%\hspace{.8in}
  \caption{An example of the type of perverse behavior of shock curves which the non-perverse Hugoniot locus condition (\Cref{nonperverse}) rules out.}\label{perversefig}
\end{figure}

\begin{definition}[Non-perverse Hugoniot locus]\label{nonperverse}
We say the system of conservation laws \eqref{system} verifies the \emph{non-perverse Hugoniot locus} condition if the following holds:

If four points $U_1,\ldots,U_4$ in $U$-space are on a straight line, and each point is connected to the other three by a shock (see \eqref{RHcond}), then there is a fixed $k\in\{1,2\}$ such that the image of $S^k_{U_i}$ contains $U_j$, for all $i\neq j$. Furthermore, along each of the curves $S^k_{U_i}$, the Liu entropy condition holds, i.e. $\frac{\text{d}}{\text{ds}}\sigma^k_{U_i}(s)\neq 0$ for $s\neq 0$, and the Lax E-condition holds, i.e.
\begin{align}\label{laxeHyP}
    \sigma^k_{U_i}(s)\in I[\lambda_k(S^k_{U_i}(s)),\lambda_k(U_i)],
\end{align}
for all $s$ and where $I[a,b]$ denotes the closed interval with endpoints $a$ and $b$. Lastly, we require that for each of the shock curves $S^k_{U_i}$, at least one of the first or second coordinates (in $U$-space) of the shock curve are monotonic functions of $s$. 
\end{definition}

We require far less knowledge about the ``hyperbolic part'' of the system \eqref{system} (i.e., knowledge of the flux $f$: geometric knowledge of the shock curves, knowledge of eigenvalues, et cetera) if we can impose a stricter condition on the entropy $\eta$ and the entropy-flux $q$. In particular, we consider the following hypotheses.

\textbf{Hypotheses $(\mathcal{H}2)$ on the system}
\begin{enumerate}[(i)]
\item (global shock curves) For a fixed $U_0$, the Hugoniot locus $H(U_0)$ (see \eqref{locus}) is the union of two smooth curves $S^1_{u_L}$ and $S^2_{u_L}$, whose union cuts $\mathcal{V}$ into four connected pieces. Furthermore, the curves admit smooth parameterizations defined on the real line: $S^1_{U_0}=S^1_{U_0}(s)$ and $S^1_{U_0}=S^2_{U_0}(s)$ with $S^1_{U_0}(0)=S^2_{U_0}(0)=U_0$ and such that the map $(U_0,s)\mapsto S^k_{U_0}(s)$ is continuous (for $k=1,2$). There is also a smooth parameterization for the shock speed, i.e. $\sigma^1_{U_0}$ and $\sigma^2_{U_0}$ such that
\begin{align}
    \sigma^k_{U_0}(s)(S^k_{U_0}(s)-U_0)=f(S^k_{U_0}(s))-f(U_0),
\end{align}
for $k=1,2$. \label{item2},
\item (the Liu entropy condition) $\frac{\text{d}}{\text{ds}}\sigma^k_{u_L}(s)\neq 0$ for $k=1,2$ and for all $u_L$. \label{item1}
\item The non-perverse Hugoniot locus condition (\Cref{nonperverse}) is verified. \label{item5}
\item The system is endowed with a strictly convex entropy $\eta$. \label{item4}
\item For all $c_1,c_2\in\mathbb{R}$, the function $\tilde q$ restricted to a level set of $\tilde\eta$ (see \eqref{tdef}) has at most four critical points which are (local) extrema if the level set of $\tilde\eta$ is bounded, and less than four otherwise.  \label{item3}
\end{enumerate}

\begin{remark}
\hspace{.1in}

\begin{itemize}
    \item  Under Hypotheses $(\mathcal{H}2)$, we do not require the system be admissible, and we do not require the sector condition. In particular, we do not require genuine nonlinearity or the Smoller-Johnson condition. Instead, we only require the Liu entropy condition. Remark that we require Hypotheses $(\mathcal{H}2)$ \eqref{item2} because we are requiring so little on the system, not even strict hyperbolicity, that knowledge of the shock curves must be assumed. 
    \item For an example of some of the behavior of shock curves which is allowed under Hypotheses $(\mathcal{H}2)$, see \Cref{four_crit_proof_fig}.
    \item For more on the \emph{Liu entropy condition}, see \cite[Section 8.4]{dafermos_big_book}. 
    \item Due to the Liu entropy condition (Hypotheses $(\mathcal{H}2)$ (\ref{item1})), for each $k$ and $u$ the shock curves $S^k_u$ cannot self-intersect.     
\end{itemize}
\end{remark}
 %(Or, perhaps the fact that the union of the $S^1$ and $S^2$ curves globally cuts the plane into four pieces (with each piece having an alternating sign between positive and negative for the determinant) prevents self-intersection? )

Our second main result is the following:
\begin{theorem}[Nonexistence of $T_4$ for systems verifying Hypotheses $(\mathcal{H}2)$] \label{main_theorem}
For a $2\times2$ system of conservation laws with flux $f$, entropy $\eta$ and entropy-flux $q$ verifying Hypotheses $(\mathcal{H}2)$, the set $\mathcal{K}_{f,\eta,q}$ cannot contain a $T_4$ configuration (see \Cref{T_N_def}).
\end{theorem}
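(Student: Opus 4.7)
The plan is to argue by contradiction. Suppose matrices $X_i = G(U_i)$, $i=1,\ldots,4$, form a $T_4$ configuration, so
\[
X_i = P + C_1 + \cdots + C_{i-1} + \kappa_i C_i,
\]
with $\kappa_i>1$, $\mathrm{rank}(C_i)=1$, and $\sum_{i=1}^4 C_i=0$. The main tool is \Cref{TN_change_sign}, applied both to the three coordinate $2\times 2$ subdeterminants $\det_{12}$, $\det_{13}$, $\det_{23}$, and, by invariance of the $T_4$ property under invertible row operations, to any linear combination of these. In particular, replacing $(\eta,q)$ by $(\tilde\eta,\tilde q)$ from \eqref{tdef} amounts to such a row operation, so we may freely choose $c=(c_1,c_2)\in\mathbb{R}^2$ without destroying the $T_4$. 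By \Cref{norankone}, which applies under Hypotheses $(\mathcal{H}2)$, none of the differences $X_i-X_j$ can be rank-one.

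The first step is to use $\det_{12}$. The key observation is that $\det_{12}(G(U)-G(V))$ vanishes exactly when $(U,V)$ is an RH shock, and by Hypothesis $(\mathcal{H}2)$ \eqref{item2}, for each $U_i$ the Hugoniot locus $H(U_i)=S^1_{U_i}\cup S^2_{U_i}$ cuts $\mathcal{V}$ into four connected regions, in each of which $\det_{12}(G(\cdot)-G(U_i))$ has constant nonzero sign. The sign-change requirement from \Cref{TN_change_sign} with $M=\det_{12}$ severely restricts how the other three $U_j$'s distribute relative to $H(U_i)$, for each $i$. I would split into two cases: (a) some pair $(U_i,U_j)$ is RH-shocked, in which case \Cref{nonperverse} (activated by Hypothesis \eqref{item5}), together with the Liu and Lax E-conditions, forces the four points to be collinear and to lie on a common shock curve of a fixed family; or (b) no pair is RH-shocked, so all twelve values $\det_{12}(X_i-X_j)$ are strictly nonzero with the forced alternating sign patterns. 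In both cases, combining the shock-curve geometry with these combinatorial constraints narrows the possible configurations of the tuple $(U_1,\ldots,U_4)$ down to a short list.

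Next I would bring the entropy into play via $\det_{13}$, $\det_{23}$, and the combinations $\det_{13}-\sigma\det_{12}$, $\det_{23}-\sigma\det_{12}$, which are themselves subdeterminants of row-equivalent matrices. A direct computation gives, whenever $(U_i,U_j)$ is RH-shocked with speed $\sigma_{ij}$,
\[
\det_{13}(X_i-X_j) = (u_{i,1}-u_{j,1})\bigl[q(U_i)-q(U_j)-\sigma_{ij}(\eta(U_i)-\eta(U_j))\bigr],
\]
so the sign of $\det_{13}(X_i-X_j)$ is recorded by the sign of the entropy production across the shock times a coordinate difference, and likewise for $\det_{23}$. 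Choosing $c$ cleverly, arrange for a level set $\{\tilde\eta=C\}$ of the modified entropy to be positioned usefully relative to the configuration (e.g.\ passing through one of the $U_i$'s in case (a), or through an appropriate intermediate point in case (b)). The sign-change conclusions of \Cref{TN_change_sign} for the above (modified) subdeterminants then translate into forced alternations of the scalar function $\tilde q$ along this level set, which I would convert into a lower bound on the number of local extrema of $\tilde q|_{\{\tilde\eta=C\}}$. The goal is to exhibit more such extrema than are allowed by Hypothesis \eqref{item3} (at least five on a bounded level set, or at least four on an unbounded one), producing the desired contradiction.

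The hard part will be this final extremum count. The algebraic sign conditions from $T_4$ only yield a handful of constraints, and turning them into enough local extrema of $\tilde q$ on a level curve of $\tilde\eta$ requires careful bookkeeping --- especially in case (b), where no pair is RH-shocked and the shock-curve geometry enters only indirectly through the partition of $\mathcal{V}$ by each $H(U_i)$. The non-perverse Hugoniot locus condition plays a subtle but essential role: it rules out pathological shock-curve configurations (such as that in \Cref{perversefig}) which could otherwise satisfy the sign patterns without creating extra extrema of $\tilde q$ on the relevant level set. The gauge freedom in choosing $c$ is what makes the level-set analysis flexible enough to handle all configurations uniformly, and is the reason Hypothesis \eqref{item3} is imposed on \emph{every} $\tilde\eta$, $\tilde q$ rather than only on $\eta$, $q$ themselves.
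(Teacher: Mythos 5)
Your proposal has a genuine gap at its core: you correctly observe that replacing $(\eta,q)$ by $(\tilde\eta,\tilde q)$ is a row operation preserving the $T_4$ property, but you never supply a mechanism that pins down the choice of $c=(c_1,c_2)$, and ``choose $c$ cleverly so that a level set passes through one of the $U_i$'' is far too weak. The paper's key step here is \Cref{lin_combo}: writing $C_i=a_i\otimes n_i$, the absence of rank-one connections forces the covectors $n_i$ to span $\mathbb{R}^2$, so the concatenated matrix $B-\Pi$ has rank at most $2$; the degenerate possibility that its first two rows are proportional is then excluded using the non-perverse Hugoniot locus condition together with the Lax E-condition, the Liu condition and Lax's dissipation formula (\Cref{diss_lemma}). (Note also that your case (a) misreads \Cref{nonperverse}: one RH-connected pair does \emph{not} force collinearity; the condition is an implication that is only invoked when all four $U_i$ are collinear and pairwise shocked, which is exactly this degenerate subcase.) The conclusion is that there exist $c_1,c_2$ such that \emph{all four} points lie simultaneously on a single level set of $\tilde\eta$ and a single level set of $\tilde q$ --- this is the structure that makes Hypothesis $(\mathcal{H}2)$ \eqref{item3} bite, and nothing in your sketch produces it.

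Your endgame is also not viable as stated: you aim to force at least five local extrema of $\tilde q$ on a level curve of $\tilde\eta$, but \eqref{item3} permits exactly four on a bounded level set, and four $T_4$ points interleaved with four extrema is perfectly consistent with that bound, so no extremum-count contradiction is available --- indeed you concede the count is not carried out. The paper instead gets the contradiction from $\det_{12}$ alone at this stage: after the reduction, the four points alternate with the (at most) four extrema of $\tilde q|_{\{\tilde\eta=C\}}$; choosing $U_1$ adjacent to both the global maximum and the global minimum, the continuity/``dragging'' argument (\Cref{curve_perturb}) combined with \Cref{diss_lemma} and the Liu condition shows that every shock curve emanating from $U_1$ makes its last exit from $\{\tilde\eta\leq C\}$ adjacent to $U_1$, so the Hugoniot locus $H(U_1)$ does not separate $U_2,U_3,U_4$ and $\det_{12}(X_1-X_j)$ has constant sign in $j$, contradicting \Cref{TN_change_sign}. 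To repair your proof you would need both the rank-reduction lemma placing the points on common level sets and this separation argument (or a genuine substitute for it), neither of which is present in the proposal.
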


We remark that the requirement that the Liu entropy condition (Hypotheses $(\mathcal{H}2)$ \eqref{item1}) hold can be weakened somewhat. The Liu entropy condition implies the nonexistence of rank-one connections in the set $\mathcal{K}_{f,\eta,q}$ (see \Cref{norankone}). However, for the proof of \Cref{main_theorem}, it is enough to simply assume this consequence, which is, as explained earlier in Section \ref{sec:Hugoniotlocus}, equivalent to a strict inequality in \eqref{entropicshock} for every shock $(U_L,U_R)$ with $U_L\neq U_R$. Indeed, such a condition is also considered by DiPerna in \cite[p.~39]{MR684413}, where he notes that this is ``typically the situation in mechanics.'' However, to simplify the statement of our results, we choose to include the Liu entropy condition in our Hypotheses $(\mathcal{H}2)$ as it is easily verified in all examples of interest.

The proof of \Cref{main_theorem} is again deferred to \Cref{sec:proofmain_theorem}.

\subsection{The hyperbolic part of Hypotheses $(\mathcal{H}1)$ implies the hyperbolic part of  $(\mathcal{H}2)$}

In this section, we show that the ``hyperbolic part'' of Hypotheses $(\mathcal{H}1)$ (parts \eqref{item11},\eqref{item21}, \eqref{item61})  implies the ``hyperbolic part'' of Hypotheses $(\mathcal{H}2)$ (parts \eqref{item2},\eqref{item1}, \eqref{item5}).

We have the following result,
\begin{lemma}[Hyperbolic part of Hypotheses $(\mathcal{H}1)$ implies the hyperbolic part of  $(\mathcal{H}2)$]\label{h1h2equiv}
A system \eqref{system} verifying Hypotheses $(\mathcal{H}1)$ parts \eqref{item11},\eqref{item21}, \eqref{item61} will also verify  Hypotheses $(\mathcal{H}2)$ parts \eqref{item2},\eqref{item1}, \eqref{item5}.
\end{lemma}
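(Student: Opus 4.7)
The plan is to verify the three components of the ``hyperbolic portion'' of Hypotheses $(\mathcal{H}2)$ one at a time, leveraging standard results about shock curves for admissible systems satisfying a sector condition \cite{K-K}.

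For the \emph{global shock curves} (Hypothesis $(\mathcal{H}2)$\eqref{item2}), I would first obtain local existence and smoothness of $S^k_{U_0}$ and $\sigma^k_{U_0}$ near $U_0$ via the implicit function theorem applied to the Rankine--Hugoniot system, using strict hyperbolicity. The substantive point is extension to a smooth parameterization on all of $\mathbb{R}$: under Smoller--Johnson, formula \eqref{kappa_formula} forces $\kappa_k$ to have a fixed sign, so the shock curves are strictly convex; combined with the sector condition, which traps each $r_k$ in a fixed open sector of $\mathbb{R}^2$, this prevents blow-up of the arclength parameterization and ensures the tangent to $S^k_{U_0}(s)$ remains within a fixed open cone for all $s$, yielding global smooth existence by a continuation argument. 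The decomposition of $\mathcal{V}$ into four connected pieces follows because $S^1_{U_0}$ and $S^2_{U_0}$ are transverse at $U_0$ (distinct eigenvalues), strictly convex, and unbounded in opposite directions thanks to the sector condition, hence cannot reintersect. The \emph{Liu entropy condition} (Hypothesis $(\mathcal{H}2)$\eqref{item1}) then follows from the classical Lax identity $2\dot{\sigma}^k_{U_0}(0) = l_k(U_0) D^2 f(r_k,r_k)(U_0)$: genuine nonlinearity gives nonvanishing at the base point, and the uniform sign coming from Smoller--Johnson propagates along the curve via the same continuation argument, giving $\dot{\sigma}^k_{U_0}(s) \neq 0$ for all $s \neq 0$.

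For the \emph{non-perverse Hugoniot locus} (Hypothesis $(\mathcal{H}2)$\eqref{item5}), my approach is to show that the premise of \Cref{nonperverse} is vacuous under $(\mathcal{H}1)$, so the definition holds trivially. Suppose $U_1,\ldots,U_4$ lie on a line $L \subset \mathbb{R}^2$ and are pairwise shock-connected. By the global strict convexity of shock curves established above, $L$ meets each $S^k_{U_1}$ in at most two points, so, counting $U_1$ as one such intersection, at most one of $\{U_2,U_3,U_4\}$ can lie on $S^1_{U_1}$ and at most one on $S^2_{U_1}$. This gives at most two shock-connections from $U_1$ to collinear points, contradicting the pairwise connection hypothesis. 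Hence no such configuration exists.

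The main obstacle I expect is the global control required in the first step: upgrading the purely local curvature information in \eqref{kappa_formula} to \emph{global} strict convexity of shock curves, which is the linchpin of both the extension to $s \in \mathbb{R}$ and the vacuity argument for non-perverseness. I would handle this by tracking the tangent angle $\theta(s)$ of $S^k_{U_0}(s)$ along the curve, using the uniform sign of $l_j D^2 f(r_i,r_i)$ from Smoller--Johnson to control $\dot\theta$ and the sector condition to rule out the tangent rotating by $\pi$; once in hand, this simultaneously prevents $S^1_{U_0}$ from reintersecting $S^2_{U_0}$, secures the $4$-piece decomposition of $\mathcal{V}$, and yields the convexity used in the non-perverse argument.
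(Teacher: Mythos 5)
There is a genuine gap, and it sits exactly where you flagged your ``main obstacle''. Everything in your sketch --- the global extension of the parameterization, the non-reintersection of $S^1_{U_0}$ and $S^2_{U_0}$, the four-piece decomposition, the global Liu condition, and the vacuity of the premise of \Cref{nonperverse} --- is made to rest on upgrading \eqref{kappa_formula} to \emph{global strict convexity of the shock curves}, with the tangent angle controlled by the Smoller--Johnson quantity $l_jD^2f(r_i,r_i)$. But \eqref{kappa_formula} is the curvature of the \emph{rarefaction} curves; the shock curve agrees with $R^k_{U_0}$ only to third order at the base point, and away from $U_0$ its tangent and curvature are governed by the differentiated Rankine--Hugoniot relation $(Df(S^k_{U_0}(s))-\sigma^k_{U_0}(s))\frac{d}{ds}S^k_{U_0}(s)=\frac{d}{ds}\sigma^k_{U_0}(s)\,(S^k_{U_0}(s)-U_0)$, in which the base point enters globally and which is not computed by $l_jD^2f(r_i,r_i)$. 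For the same reason, ``the uniform sign from Smoller--Johnson propagates along the curve'' for $\frac{d}{ds}\sigma^k_{U_0}$ is an assertion, not an argument: genuine nonlinearity gives the sign at $s=0$, but global monotonicity of $\sigma^k$, star-shapedness, and the fact that $S^k_{U_0}$ never crosses its tangent line at $U_0$ are precisely the nontrivial global theorems of Keyfitz--Kranzer collected in \Cref{geo_facts} (parts \eqref{prop4}, \eqref{prop5}, \eqref{prop1}), and they yield star-shapedness and the tangent-line property, \emph{not} global strict convexity of shock curves. The paper does not attempt your continuation argument: it invokes these cited results, and for \Cref{nonperverse} it needs only the weaker facts that $S^k_{U}$ does not cross its tangent line at $U$ (\Cref{geo_facts} parts \eqref{prop1}, \eqref{prop5}), that the two shock curves meet only at $U$ (part \eqref{prop0}), and the sector separation (part \eqref{prop10}); these already force a straight line through $U$ to meet $H(U)$ at most twice including $U$ itself, so the four-point premise is vacuous --- the same conclusion you aim for, reached without the unproven convexity.

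A second omission: Hypotheses $(\mathcal{H}2)$ \eqref{item2} also requires continuity of the map $(U_0,s)\mapsto S^k_{U_0}(s)$ in the base point, which your sketch never addresses (and the implicit function theorem at the base point itself is delicate, since $U=U_0$ solves Rankine--Hugoniot for every $\sigma$). The paper proves this continuity by applying the implicit function theorem at a point $U\neq U_0$ of the curve, where invertibility of $Df(U)-\bar\sigma I$ follows from the Lax E-condition (\Cref{geo_facts} parts \eqref{prop6}, \eqref{prop7}) combined with the sign hypothesis $\lambda_1\le 0\le\lambda_2$ of $(\mathcal{H}1)$ \eqref{item61}; your proposal never uses that hypothesis anywhere, which is a further sign that global structure is being assumed rather than derived.
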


\begin{figure}[tb]
 % \centering
      \includegraphics[width=.6\textwidth]{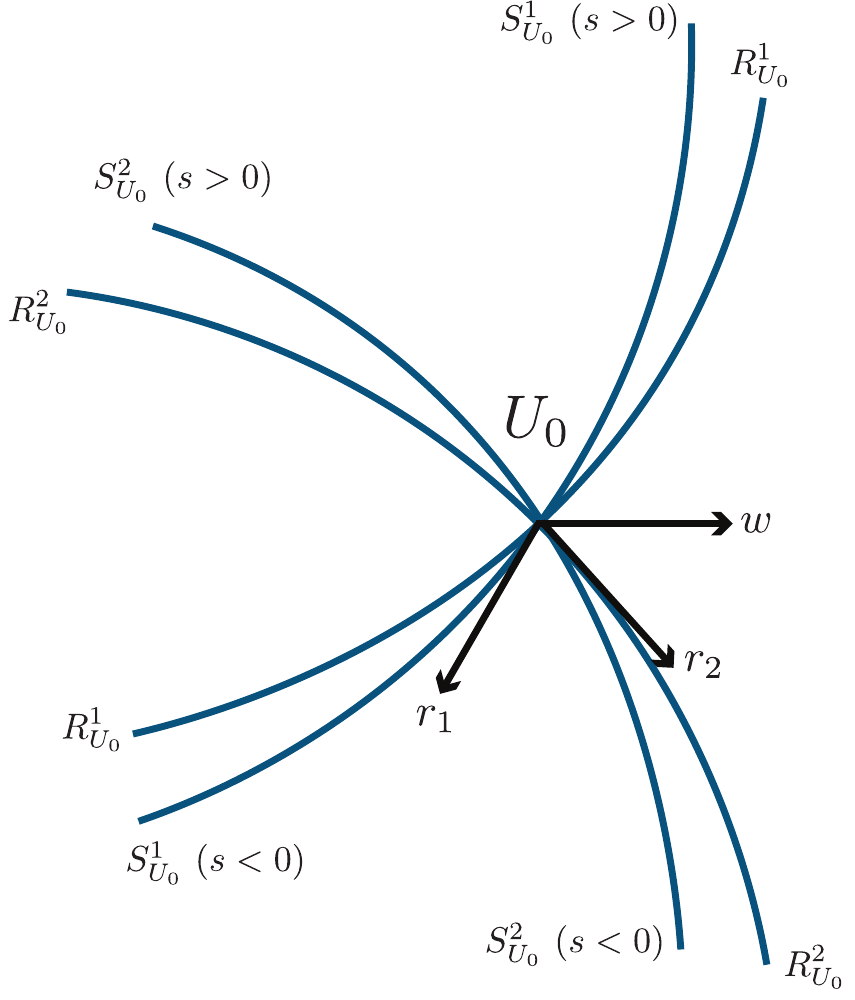}%\hspace{.8in}
  \caption{An illustration of \Cref{geo_facts} (based on \cite[Figure 4]{K-K}).}\label{geo_curves_diag}
\end{figure}

Before we prove \Cref{h1h2equiv}, we collect some facts about the geometry of the Hugoniot locus:

\begin{lemma}[\protect{Geometry of the Hugoniot locus \cite[Theorems 4.4-4.7, Theorem 5.1, Theorem 5.4]{K-K}}]\label{geo_facts}

\vspace{.1in}
For an admissible system (\Cref{admsystem}) verifying the sector condition (\Cref{sector_def}):
\begin{enumerate}[(i)]
    \item (global shock curves) For $U_0\in\mathcal{V}$, the Hugoniot locus $H(U_0)$ is exactly the union of two simple arcs $S^1_{U_0}$ and $S^2_{U_0}$ extending to infinity, each of which cuts the $U$-plane into two pieces and contains $U_0$. The shock curves, as they are part of the Hugoniot locus, cannot intersect each other except at $U_0$. For $k=1,2$, we can parameterize $S^k_{U_0}$ as follows: $S^k_{U_0}=S^k_{U_0}(s)$ where for $s>0$ the curves leave $U_0$ in the $-r_k$ direction (corresponding to decreasing $\lambda_k$). We can also choose a smooth parameterization for the $\sigma$, i.e. $\sigma^1_{U_0}$ and $\sigma^2_{U_0}$ such that
\begin{align}
    \sigma^k_{U_0}(s)(S^k_{U_0}(s)-U_0)=f(S^k_{U_0}(s))-f(U_0),
\end{align}
for $k=1,2$. \label{prop0}
    \item The shock curve $S^k_{U_0}$, $k=1,2$, makes third-order contact at $U_0$ with the corresponding rarefaction curve $R^k_{U_0}$, i.e. $S^k$ and $R^k$ have the same tangent and curvature at $U_0$. Recall from \eqref{kappa_formula} that the $R^k$ curves are strictly convex and the $R^1$ curves bend toward $-l_2$ (away from $r_2$), while the $R^2$ curves bend toward $l_1$ (and $r_1$). \label{prop1}
    \item  For $s>0$, $S^k$ lies entirely inside $R^1$ and entirely outside $R^2$. \label{prop2}
    \item  For $s<0$, $S^k$ lies entirely outside $R^1$ and entirely inside $R^2$. \label{prop3}
    \item  The point $U\in S^k (U_0)([0,\infty))$ if and only if $U_0\in S^k (U)((-\infty,0])$. \label{propsymmetry}
    \item  (the Liu entropy condition) For $s\neq 0$, $\frac{d}{ds} \sigma^{k}_{U_0} <0$.  \label{prop4}
    \item  The curves $S^k_{U_0}$ are star-shaped with respect to $U_0$. \label{prop5}
    \item (the Lax E-condition) For $s>0$, $\lambda_k(S^{k}_{U_0}(0))>\sigma^{k}_{U_0}(s)>\lambda_k(S^{k}_{U_0}(s))$.\label{prop6}
    \item For $s<0$, $\lambda_k(S^{k}_{U_0}(0))<\sigma^{k}_{U_0}(s)<\lambda_k(S^{k}_{U_0}(s))$.\label{prop7}
    \item For $s\in\mathbb{R}$, define $T\coloneqq S^{k}_{U_0}(s)-U_0$. Then $w_1\cdot T>0$ (where $w_1$ is from \Cref{sector_def}) when $k=1$ and $s>0$ or $k=2$ and $s<0$. Moreover, $w_1\cdot T<0$ when $k=2$ and $s>0$ or $k=1$ and $s<0$. \label{prop10}
\end{enumerate}
\end{lemma}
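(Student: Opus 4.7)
The lemma compiles classical results on the Hugoniot locus for admissible systems satisfying the sector condition, cited from Keyfitz-Kranzer. My plan is to outline the unified argument, whose ingredients are (a) Taylor expansion of the Rankine-Hugoniot equation around $U_0$, (b) a scalar ODE for $\sigma^k(s)$ driven by the Smoller-Johnson quantity, and (c) use of the sector condition to globalize.

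First I would set up the local structure. Writing a candidate shock curve as $U(s)=U_0+s\tau+\tfrac{s^2}{2}\tau_2+O(s^3)$ with $|\tau|=1$ and plugging into $f(U)-f(U_0)=\sigma(U-U_0)$, the $O(s)$ terms force $\tau$ to be an eigenvector of $Df(U_0)$; the implicit function theorem then produces two smooth local branches tangent to $r_1$ and $r_2$. Matching $O(s^2)$ terms against the Taylor expansion of the rarefaction curve $\dot U=r_k(U)$ yields that the curvature of $S^k$ at $U_0$ equals that of $R^k$, giving part \eqref{prop1}, and also delivers $\sigma^k(s)=\lambda_k(U_0)+c_k s+O(s^2)$ with $c_k\neq 0$ by genuine nonlinearity. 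The symmetry statement \eqref{propsymmetry} is immediate from the symmetric role of $U_0,U$ in Rankine-Hugoniot.

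Next I would propagate these first-order facts globally. Differentiating Rankine-Hugoniot twice in $s$ and taking the $l_j$-component ($j\neq k$) yields an ODE for $d\sigma^k/ds$ whose driving term is proportional to $l_j D^2 f(r_k,r_k)$ --- the Smoller-Johnson quantity --- and hence is sign-definite by \eqref{SJcond}. Integrating, one obtains $d\sigma^k/ds<0$ for all $s\neq 0$ (the Liu entropy condition \eqref{prop4}), and combined with $\sigma^k(0)=\lambda_k(U_0)$ and the sign of $r_k\cdot\nabla\lambda_k$, this yields the Lax E-condition \eqref{prop6}-\eqref{prop7}. The sector condition now globalizes: projecting Rankine-Hugoniot against $w_1,w_2$ gives an a priori lower bound on the components of $S^k(s)-U_0$ in these directions, precluding finite-$s$ breakdown or reversal and yielding the global existence \eqref{prop0} and the direction statement \eqref{prop10}.

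The remaining geometric parts \eqref{prop2}, \eqref{prop3}, \eqref{prop5} are continuation/comparison arguments built on what precedes. For \eqref{prop2}-\eqref{prop3}, third-order contact means $S^k$ and $R^k$ agree to second order at $U_0$, while strict convexity of rarefactions \eqref{kappa_formula} fixes the direction each $R^k$ bends; at a first hypothetical crossing of $S^k$ with $R^1$ or $R^2$ at some $s^\star\neq 0$, the tangent of $S^k$ would have to coincide with $r_1$ or $r_2$, which together with the Rankine-Hugoniot identity forces $\sigma^k=\lambda_i$ for some $i$, contradicting monotonicity of $\sigma^k$ via the ODE above. Star-shapedness \eqref{prop5} follows by showing that $s\mapsto\arg(S^k(s)-U_0)$ is monotone, which reduces, again by differentiating Rankine-Hugoniot and projecting on a suitable direction, to a sign condition supplied by Smoller-Johnson and the sector hypothesis. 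The main obstacle throughout is the globalization step: all local facts are routine Taylor-expansion exercises, but ensuring $S^k$ extends to all of $\mathbb R$ without self-intersection and remains on the prescribed side of $R^1,R^2$ is precisely where the sector condition is genuinely needed, and where one must appeal most carefully to the Keyfitz-Kranzer continuation analysis.
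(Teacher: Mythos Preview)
The paper does not prove this lemma at all: it is stated with a citation to Keyfitz--Kranzer \cite[Theorems 4.4--4.7, Theorem 5.1, Theorem 5.4]{K-K} and followed only by a remark (noting that the weaker half-plane condition already suffices). So there is no ``paper's own proof'' to compare against; your sketch is a stand-alone attempt to reconstruct the Keyfitz--Kranzer arguments.

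As a roadmap your outline is broadly reasonable---the local structure via Taylor expansion, third-order contact, and the role of genuine nonlinearity for the leading behaviour of $\sigma^k$ are all classical and correct, and you correctly identify globalization as the real issue. A few of your mechanisms are imprecise, however. Your claim that differentiating Rankine--Hugoniot twice and projecting on $l_j$ produces an ODE for $d\sigma^k/ds$ ``driven by'' the Smoller--Johnson quantity is not quite how the argument runs: a single differentiation gives $(Df(U)-\sigma I)\dot U=\dot\sigma(U-U_0)$, and the global sign of $\dot\sigma$ is obtained in \cite{K-K} by a more delicate combination of this identity with the star-shapedness and the confinement of $S^k$ between rarefaction curves, rather than by directly isolating $l_jD^2f(r_k,r_k)$. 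Likewise, your crossing argument for \eqref{prop2}--\eqref{prop3} asserts that at a first intersection of $S^k$ with $R^i$ the tangents must coincide, but an intersection of two curves does not force tangency; the actual comparison in \cite{K-K} uses the convexity of the rarefaction curves together with the sector/half-plane condition to trap $S^k$ in a region from which it cannot escape. These are not fatal objections---the ingredients you list are the right ones---but the logical dependencies among \eqref{prop2}--\eqref{prop7} are more intertwined than your linear presentation suggests, and a self-contained proof would need to follow the Keyfitz--Kranzer bootstrap more closely.
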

\begin{remark}

\hspace{.1in}

\begin{itemize}
    \item See \Cref{geo_curves_diag} for an illustration of the geometric facts from \Cref{geo_facts}.
    \item The Lemma does not require the sector condition. It holds also under the weaker half-plane condition -- see \cite[p.~449]{K-K}.
    \end{itemize}
\end{remark}

We can now prove \Cref{h1h2equiv}.

\begin{proof}[Proof of \Cref{h1h2equiv}]
\hspace{.1in}

\uline{Hypotheses $(\mathcal{H}2)$ \eqref{item2}}: Most of Hypotheses $(\mathcal{H}2)$ \eqref{item2} follows from \Cref{geo_facts} \eqref{prop0}. What is left to show is the continuous dependence of the shock curve $S^k_{U_0}$ on the point $U_0$. 

From \Cref{geo_facts} \eqref{prop0}, we know that  for any admissible system verifying the sector condition, 
 the Hugoniot locus is exactly the union of two simple arcs $S^1_{U_0}$ and $S^2_{U_0}$ extending to infinity, each of which cuts the $U$-plane into two pieces and contains ${U_0}$. 

We now show the continuous dependence of the shock curve $S^k_{U_0}$ on the point $U_0$. Choose $U\neq U_0$ on the curve. We then have the Rankine-Hugoniot condition
\begin{align}
    \bar{\sigma}(U-U_0)=f(U)-f(U_0),
\end{align}
for some $\bar{\sigma}\in\mathbb{R}$.

We appeal to the Implicit Function Theorem. 

From Hypotheses $(\mathcal{H}1)$ \eqref{item61} and \Cref{geo_facts} \eqref{prop6} (the Lax E-condition) and \eqref{prop7}, we have that the matrix $Df(U)-\bar\sigma$ is invertible. Thus, locally around the point $U$, we have a parameterization of the curve $S^k_{\tilde{U}_0}$ for small $\abs{U_0-\tilde{U}_0}$. The parameterization parameter is the shock speed.

\uline{Hypotheses $(\mathcal{H}2)$ \eqref{item1}}: Follows from \Cref{geo_facts} \eqref{prop4}.

\uline{Hypotheses $(\mathcal{H}2)$ \eqref{item5}}: Firstly, remark that due to \Cref{geo_facts} part \eqref{prop1} and part \eqref{prop5}, the curves $S^k_{U}$, for all $s$ and for all $U$, do not cross their tangent line at $U$. Note also \Cref{geo_facts} part \eqref{prop10}, and the fact that the $S^1$ curves cannot cross the $S^2$ curves (\Cref{geo_facts} part \eqref{prop0}). Thus, for each $U$, the union of the images of the $S^1_{U}$ and $S^2_{U}$ shock curves will only intersect a straight line going through $U$ at most twice (including the point $U$ itself). 

\end{proof}

\subsection{Ruling out rank-one connections}\label{sec:norankone}

Given an entropy $\eta$, for $a,b\in\mathcal{V}$ we can define the relative entropy,
\begin{align}\label{rel_entropy}
    \eta(a|b)\coloneqq \eta(a)-\eta(b)-\nabla\eta(b)\cdot(a-b).
\end{align}

Note that if $\eta$ is convex, then $\eta(a|b)\geq 0$. Moreover, if $\eta$ is strictly convex then $\eta(a|b)=0 \iff a=b$.

To rule out rank-one connections we will use the following Lemma, which gives an estimate on the amount of entropy which is dissipated along a shock.

\begin{lemma}[Lax's entropy dissipation formula] \label{diss_lemma}
\hspace{.1in}

For $k=1,2$ and for any shock $({u_L},S_{u_L}^k(s),\sigma_{u_L}^k(s))$,
\begin{align}\label{diss_form}
q(S_{u_L}^k(s))-\sigma_{u_L}^k(s)\eta(S_{u_L}^k(s))=q(u_L)-\sigma_{u_L}^k(s)\eta(u_L) +\int\limits_0^s \frac{\text{d}}{\text{ds}}\big[\sigma_{u_L}^k\big](\tau)\eta({u_L}|S_{u_L}^k(\tau))\,d\tau.
\end{align}
\end{lemma}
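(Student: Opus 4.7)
The plan is to reduce the integral identity to a pointwise ODE identity along the shock curve and then integrate from $0$ to $s$. To streamline the notation, fix $k\in\{1,2\}$ and $u_L$, and write $S(\tau)=S^k_{u_L}(\tau)$, $\sigma(\tau)=\sigma^k_{u_L}(\tau)$. Define
\begin{equation*}
\Phi(s):=q(S(s))-\sigma(s)\eta(S(s))-q(u_L)+\sigma(s)\eta(u_L).
\end{equation*}
Since $S(0)=u_L$, we have $\Phi(0)=0$, so the identity \eqref{diss_form} is equivalent to proving $\Phi'(s)=\sigma'(s)\,\eta(u_L\mid S(s))$ for all $s$, which I would then integrate from $0$ to $s$.

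The key step is to compute $\Phi'(s)$ using two ingredients. First, the entropy compatibility relation \eqref{entropy_compat_convention912} gives $\nabla q(S)\cdot S'=\nabla\eta(S)\,Df(S)\,S'$. Second, differentiating the Rankine-Hugoniot relation $\sigma(\tau)(S(\tau)-u_L)=f(S(\tau))-f(u_L)$ in $\tau$ yields
\begin{equation*}
Df(S)\,S'=\sigma'\,(S-u_L)+\sigma\,S'.
\end{equation*}
Combining these two identities gives
\begin{equation*}
\nabla q(S)\cdot S'=\sigma'\,\nabla\eta(S)\cdot(S-u_L)+\sigma\,\nabla\eta(S)\cdot S'.
\end{equation*}
Plugging this into the direct differentiation
\begin{equation*}
\Phi'(s)=\nabla q(S)\cdot S'-\sigma'\eta(S)-\sigma\,\nabla\eta(S)\cdot S'+\sigma'\eta(u_L),
\end{equation*}
the two terms involving $\sigma\,\nabla\eta(S)\cdot S'$ cancel, and the remaining terms organize themselves into
\begin{equation*}
\Phi'(s)=\sigma'(s)\bigl[\eta(u_L)-\eta(S(s))-\nabla\eta(S(s))\cdot(u_L-S(s))\bigr]=\sigma'(s)\,\eta(u_L\mid S(s)),
\end{equation*}
using the definition \eqref{rel_entropy}. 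Integrating from $0$ to $s$ and using $\Phi(0)=0$ then yields \eqref{diss_form}.

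This is essentially a one-line calculation once the bookkeeping is set up, and I do not anticipate a genuine obstacle: everything in sight is smooth in $\tau$ by the regularity of the parameterization assumed in Hypotheses $(\mathcal{H}2)$\eqref{item2} (or \Cref{geo_facts}\eqref{prop0}), so there are no subtleties about differentiating under the integral. The only mild care required is to remember that $\sigma$ depends on $s$ when computing $\Phi'$—that is precisely what produces the $\sigma'(s)\eta(u_L\mid S(s))$ term, and, together with the cancellation coming from the differentiated Rankine-Hugoniot condition, this is why the formula takes the clean relative-entropy form.
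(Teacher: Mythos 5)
Your proposal is correct and is essentially the paper's own argument: both reduce \eqref{diss_form} to a pointwise identity in $s$ by differentiating, using the compatibility relation $\nabla q=\nabla\eta\,Df$ together with the differentiated Rankine--Hugoniot condition, and then integrating from $0$ to $s$ with the trivial equality at $s=0$. The only cosmetic difference is that the paper compares two functions $\mathcal{F}_1,\mathcal{F}_2$ with equal derivatives, whereas you show directly that $\Phi'(s)=\sigma'(s)\,\eta(u_L|S(s))$; the computation is the same.
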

\begin{remark}

\hspace{.05in}

\begin{itemize}
    \item \Cref{diss_lemma} can be dated back to Lax \cite{MR0393870}. The result is in fact general and holds for $n\times n$ systems as well. \Cref{diss_lemma} follows immediately from the Rankine-Hugoniot jump condition \eqref{RHcond}. A simple proof is given in \cite{Leger2011}. For the reader's convenience, a proof is given in the Appendix (\Cref{proof_app}). 

    \item Intuitively, for genuinely nonlinear systems, a shock of the $k^{\text{th}}$ characteristic family dissipates entropy because the characteristic speed $\lambda_k(U_L)$ will be different from the characteristic speed $\lambda_k(S_{u_L}^k(s))$, and so intuitively, at a shock,  characteristics from the left are colliding with characteristics from the right,  thus causing loss of information, and an increase of physical entropy (or a decrease of mathematical entropy, as encoded in \eqref{entropy_ineq} and \eqref{entropicshock}). Compare this with the Lax E-condition (\Cref{geo_facts} \eqref{prop6}).
    \end{itemize}
\end{remark}

We can now state and prove a slightly more precise version of \Cref{norankone}.

\begin{proposition}
Consider any system \eqref{system} with global shock curves and verifying the Liu entropy condition (Hypotheses $(\mathcal{H}2)$ \eqref{item2} and \eqref{item1}) and endowed with a strictly convex entropy $\eta$ with associated  entropy-flux $q$. Then the set \eqref{manifold_K} does not contain any rank-one connections.
\end{proposition}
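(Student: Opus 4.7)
\medskip

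\noindent\textbf{Proof proposal.} The plan is to reduce the statement to showing that every nontrivial shock admits strict inequality in the entropy condition \eqref{entropicshock}, and then to extract this strict inequality from Lax's entropy dissipation formula (\Cref{diss_lemma}) using strict convexity of $\eta$ and the Liu entropy condition.

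First I would unpack the meaning of a rank-one connection in $\mathcal{K}_{f,\eta,q}$. The matrix $G(U_L)-G(U_R)$ has two columns in $\mathbb{R}^3$; its rank is at most one exactly when these columns are proportional. Excluding the trivial case $U_L=U_R$, this is equivalent to the existence of $\sigma\in\mathbb{R}$ such that simultaneously $\sigma(U_L-U_R)=f(U_L)-f(U_R)$ (the Rankine--Hugoniot condition \eqref{RHcond}) and $\sigma(\eta(U_L)-\eta(U_R))=q(U_L)-q(U_R)$ (equality in \eqref{entropicshock}). Thus ruling out rank-one connections is equivalent to showing that on every nontrivial shock triple $(U_L,U_R,\sigma)$ the entropy inequality holds \emph{strictly}.

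Next, I would fix such a shock and use the global shock curves hypothesis to write $U_R=S^k_{U_L}(s)$ and $\sigma=\sigma^k_{U_L}(s)$ for some $k\in\{1,2\}$ and some $s\neq 0$. Applying \Cref{diss_lemma} and rearranging yields
\begin{equation*}
q(U_R)-q(U_L)-\sigma\bigl(\eta(U_R)-\eta(U_L)\bigr)=\int_0^s \frac{\mathrm{d}}{\mathrm{d}\tau}\bigl[\sigma^k_{U_L}\bigr](\tau)\,\eta\bigl(U_L\,\big|\,S^k_{U_L}(\tau)\bigr)\,d\tau.
\end{equation*}
A rank-one connection would force the left-hand side to vanish, so the proof reduces to showing that this integral is nonzero whenever $s\neq 0$.

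The final step is a sign argument on the integrand. By strict convexity of $\eta$, the relative entropy $\eta(U_L\,|\,S^k_{U_L}(\tau))$ is strictly positive for every $\tau$ in the open interval between $0$ and $s$, since along this interval $S^k_{U_L}(\tau)\neq U_L$ (using that the Liu entropy condition forbids shock curves from returning to $U_L$). By the Liu entropy condition, $\tfrac{\mathrm{d}}{\mathrm{d}\tau}\sigma^k_{U_L}(\tau)\neq 0$ on the same open interval, and by continuity of this derivative (from the smooth parameterization of the shock curves) it has constant sign there. Hence the integrand has constant nonzero sign on the whole open interval of integration, and the integral is strictly nonzero; this gives a strict inequality in \eqref{entropicshock} and contradicts the existence of the supposed rank-one connection.

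The only delicate point is verifying that the derivative of $\sigma^k_{U_L}$ is continuous and has constant sign away from $\tau=0$ -- this follows from the smooth parameterization and the nonvanishing guaranteed by the Liu condition together with the connectedness of each interval $(0,s)$ or $(s,0)$, so I expect no substantive obstacle beyond carefully tracking these regularity assumptions inherited from ``global shock curves.''
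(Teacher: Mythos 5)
Your proposal is correct and follows essentially the same route as the paper: the paper also deduces the result immediately from Lax's dissipation formula (\Cref{diss_lemma}) together with the strict positivity of the relative entropy coming from strict convexity of $\eta$ and the nonvanishing of $\tfrac{\mathrm{d}}{\mathrm{d}s}\sigma^k_{U_L}$ from the Liu condition. You merely spell out details the paper leaves implicit (the equivalence of rank-one connections with zero entropy production along a Rankine--Hugoniot shock, and the constant sign of the integrand), so no changes are needed.
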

\begin{remark}
By \Cref{h1h2equiv}, the Proposition covers all systems verifying either Hypotheses $(\mathcal{H}1)$ or Hypotheses $(\mathcal{H}2)$.
\end{remark}
\begin{proof}
Due to the strict convexity of $\eta$, the relative entropy \eqref{rel_entropy} will be nonnegative. Thus, the result follows immediately from \Cref{diss_lemma}.
\end{proof}

\subsection{Change of coordinates}

The Hypotheses $(\mathcal{H}1)$ and Hypotheses $(\mathcal{H}2)$ do not cover all interesting systems. But we have this extra freedom: we can change coordinates. In particular, we can go from the Eulerian to the Lagrangian perspective (or vice versa). This allows us to consider a new class of systems. We have the following result.

\begin{theorem}\label{changeofCoordsnoT4}
Assume that the conservation law \eqref{system} with flux $f$ verifies the non-perverse Hugoniot locus condition (\Cref{nonperverse}), is endowed with a strictly convex entropy $\eta$, with associated entropy-flux $q$,  and after a change of coordinates (either from Eulerian to Langrangian or vice-versa, see \Cref{LEcoordinates}), the transformed system verifies either Hypotheses $(\mathcal{H}2)$ or Hypotheses $(\mathcal{H}1)$.  Then, the set $\mathcal{K}_{f,\eta,q}$ cannot contain a $T_4$ configuration..
\end{theorem}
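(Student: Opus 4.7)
We argue by contradiction. Suppose $\mathcal{K}_{f,\eta,q}$ contains a $T_4$ configuration $\{X_i\}_{i=1}^4$ with $X_i = G(U_i)$ for states $U_i \in \mathcal{V}$. Let $\Phi\colon\mathcal{V}\to\tilde{\mathcal{V}}$ denote the Eulerian--Lagrangian state map (from \Cref{LEcoordinates}), set $\tilde U_i = \Phi(U_i)$, and define $\tilde X_i = \tilde G(\tilde U_i)$, where $\tilde G$ is the matrix map \eqref{matrixmap} for the transformed flux, entropy, and entropy-flux. The plan is to show that $\{\tilde X_i\}_{i=1}^4$ is a $T_4$ configuration in $\mathcal{K}_{\tilde f, \tilde\eta, \tilde q}$, contradicting the hypothesis that the transformed system satisfies $(\mathcal{H}1)$ or $(\mathcal{H}2)$ by \Cref{main_theorem1} and \Cref{main_theorem}.

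The first step is to verify that no rank-one connections appear among the $\{\tilde X_i\}$. A rank-one connection $\tilde X_i - \tilde X_j$ would encode a shock $(\tilde U_i, \tilde U_j)$ in the transformed system with zero entropy dissipation. Under the L-E change of coordinates, Rankine--Hugoniot conditions transform into Rankine--Hugoniot conditions (with a new shock speed depending on the Jacobian of the coordinate change), and the entropy dissipation is preserved up to a nonzero multiplicative factor. Thus, pulling back, we would obtain a rank-one connection $X_i - X_j$ in $\mathcal{K}_{f,\eta,q}$, which contradicts the $T_4$ property of $\{X_i\}$.

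The second step is to construct the $T_4$ parameters $\tilde P, \tilde C_i, \tilde \kappa_i$ for $\{\tilde X_i\}$ satisfying \eqref{T_N_param}, with rank-one $\tilde C_i$, $\sum \tilde C_i = 0$, and $\tilde \kappa_i > 1$. At the level of the stream-function formulation \eqref{streaming}, the L-E coordinate change amounts to swapping one of the components (the ``density'' stream function in the original system becoming the Lagrangian coordinate, and the spatial coordinate becoming the first Lagrangian stream function); this gives an explicit, state-dependent matrix substitution between $\mathcal{K}_{f,\eta,q}$ and $\mathcal{K}_{\tilde f, \tilde\eta, \tilde q}$. Using the non-perverse Hugoniot locus condition (\Cref{nonperverse}) on the original system, the four states $U_i$ and their mutual shock-curve relationships are geometrically constrained enough to let us carry out this substitution pointwise and verify that the rank-one structure of the $C_i$, the affine relations, and the inequalities $\kappa_i > 1$ are preserved.

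The main obstacle is the second step: because the L-E map is nonlinear and state-dependent, the auxiliary intermediate matrices $P + C_1 + \cdots + C_k$ (which lie outside $\mathcal{K}_{f,\eta,q}$) do not transport in an obvious way. The non-perverse Hugoniot locus condition is used precisely to control how the rank-one matrices $C_i$, interpreted as shock-curve directions in the original state space, transform under $\Phi$ into rank-one matrices in the transformed constitutive set; this is the heart of the argument and is where one expects the bulk of the technical work to lie.
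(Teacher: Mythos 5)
There is a genuine gap, and it sits exactly where you placed the ``bulk of the technical work'': your Step 2 is the entire content of the theorem and is not proved, and in fact there is no reason to expect it to be true as stated. The Eulerian--Lagrangian change of variables acts on the differential inclusion by a nonlinear, state-dependent substitution (at the level of $D\psi$ it amounts to right-composition with a Jacobian that varies from point to point), so applying it to the four matrices $X_i$ does not respect the affine chain \eqref{T_N_param}: the auxiliary matrices $P$ and $C_i$ are not elements of $\mathcal{K}_{f,\eta,q}$, the $C_i$ are \emph{not} differences of elements of $\mathcal{K}_{f,\eta,q}$ and hence do not correspond to shocks or shock-curve directions at all (the $X_i$ in a $T_4$ are pairwise \emph{not} rank-one connected), and the conditions $\mathrm{rank}\,C_i=1$, $\sum_i C_i=0$, $\kappa_i>1$ have no invariant meaning under such a pointwise nonlinear map. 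Moreover, your appeal to the non-perverse Hugoniot locus condition to ``control how the $C_i$ transform'' misreads its role: in this paper that condition enters only through \Cref{lin_combo}, where it rules out the degenerate situation of four collinear, mutually shock-connected states, and thereby allows one to normalize the $T_4$ points onto common level sets of $\tilde\eta$ and $\tilde q$. It says nothing about transporting rank-one structures.

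The paper's proof deliberately avoids claiming that the transformed points form a $T_4$ in $\mathcal{K}_{\tilde f,\tilde\eta,\tilde q}$. Instead: apply \Cref{lin_combo} to the \emph{original} system (this is where non-perversity of the original Hugoniot locus is used) to replace $(\eta,q)$ by $(\tilde\eta,\tilde q)$ so that the four states lie on common level sets, and normalize $\tilde\eta=0$ there. By \Cref{LEcoordinates}, the Rankine--Hugoniot relations are preserved under the coordinate change and, since $\hat\eta=\tilde\eta/u_1$ and $\hat q=\tilde q-f_1\hat\eta$, the transformed states again lie on common level sets of $\hat\eta$ and $\hat q$. One then observes that the proofs of \Cref{main_theorem1} and \Cref{main_theorem}, after the level-set reduction, no longer use any property of being a $T_4$: they are purely geometric statements about four points on a level set together with their shock curves, and they output that for some index $j$ the Rankine--Hugoniot minors $\det(Y_i-Y_j)$ (first two rows, in the new coordinates) all have one sign. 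Because the shock/level-set structure is preserved by the transformation, this sign conclusion pulls back to $\det_{12}(X_i-X_j)$ in the original coordinates, contradicting \Cref{TN_change_sign} applied to the original, genuine $T_4$. If you want to salvage your proposal, this is the reorganization you need: transport only the shock curves and level sets (which Wagner's theorem controls), obtain the constant-sign conclusion in the transformed picture, and derive the contradiction back in $\mathcal{K}_{f,\eta,q}$ rather than trying to build a $T_4$ in the transformed constitutive set. (Your Step 1, non-preservation of zero-dissipation connections, is fine but is not the issue.)
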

\begin{remark}
For the convenience of the reader, in the Appendix \Cref{equivappendix} we reproduce in \Cref{LEcoordinates} nearly verbatim the main theorem on the equivalence of weak solutions for the  Eulerian and Langrangian equations of gas dynamics from \cite[Theorem 2]{wagnergasdynamics}. As a consequence of \Cref{LEcoordinates} the Rankine-Hugoniot condition and thus the structure of shock sets is preserved under such a transformation. Moreover, level sets of the entropy and entropy-flux are preserved. Due to the nature of our proofs of \Cref{main_theorem1} and \Cref{main_theorem}, these properties of the transformation allow our proofs to still go through. Other transformations are also possible, including transformations of both space and time -- for a discussion of this see \cite[p.~134]{wagnergasdynamics}.
\end{remark}

The proof of \Cref{changeofCoordsnoT4} is in \Cref{sec:proofchangeofCoordsnoT4}.

\subsection{Comparison with Lorent-Peng}\label{sec:lorentpengcomp}

In contrast to Lorent-Peng \cite{MR4144350} and Johansson-Tione \cite{https://doi.org/10.48550/arxiv.2208.10979}, our methods are completely geometric and thus less rigid. In particular, our techniques do not depend on the choice of Lagrangian or Eulerian coordinates for the system under consideration. Thus, we are also able to show $\mathcal{K}_{f,\eta,q}$ does not contain $T_4$ configurations for a large class of systems, including the isentropic Euler system in Eulerian coordinates for a large class of  pressure laws.

However, we find the result of Lorent-Peng \cite{MR4144350}, while algebraic, has some hidden geometric structure which we were able to generalize for the present paper. For example, \cite[Lemma 22]{MR4144350} and \cite[Lemma 23]{MR4144350} are the algebraic equivalent, in the case of the $p$-system, for the geometric argument for Case 1 in our proof of \Cref{main_theorem1} (see \Cref{case1}). We also utilize \Cref{TN_change_sign} in a similar spirit to \cite{MR4144350}.

And in the particular case of the $p$-system, \cite[Lemma 21]{MR4144350} is equivalent to our use of the Smoller-Johnson condition \eqref{SJcond} to eliminate possible cases in the proof of our \Cref{main_theorem} by calculating the total possible number of critical points of $\tilde q $ in the regions $\Romanbar{I}$ and $\Romanbar{IV}$ (see the proof for more details). See also the proof of our \Cref{psystemthird}.

Due to the geometric nature of the casework in the proof of \Cref{main_theorem1}, there is hope our techniques may provide insight into showing nonexistence of $T_N$ configurations for $N>4$. We also highlight that the $T_4$ nonexistence test we utilize in this paper (\Cref{TN_change_sign}) is in fact a nonexistence test for general $T_N$ configurations.

\section{Systems verifying Hypotheses $(\mathcal{H}1)$ or $(\mathcal{H}2)$}\label{sec:systemsverify}

%Before we can start verifying that particular systems meet Hypotheses $(\mathcal{H}1)$ or Hypotheses $(\mathcal{H}2)$, we will introduce the following basic fact about the geometry of convex sets. It will be useful when we study the level sets of the convex function $\tilde \eta$.

%\begin{lemma}[Geometry of convex sets]\label{geo_convex}
%For fixed vector $v\in\mathbb{R}^2$ and a strictly convex set $K\subset\mathbb{R}^2$ with smooth boundary, the vector $v$ is tangent to the boundary of the set $K$ at most twice.
%\end{lemma}
%\begin{proof}

%Consider two distinct points $x,y\in \partial K$ such that $v$ is tangent to the boundary of $K$ at both $x$ and $y$. Let $L$ denote the line connecting $x$ and $y$. Remark that due to the strict convexity of $K$, the boundary of $K$ can be viewed as the graph of two functions $\Gamma_1,\Gamma_2$ defined along the line $L$. Due to strict convexity, the slopes of the functions $\Gamma_i$ are strictly monotonic, thus preventing the existence of a third point on $\partial K$ where the vector $v$ is tangent to the boundary.

%Let $r\colon\mathbb{R}\to\mathbb{R}^2$ denote a parameterization of the boundary of $K$, with $r(0)=r(1)$ if $K$ is compact. Note that due to strict convexity, the angle $\theta(t)$ the tangent vector $r'(t)$ makes with the vector $(1,0)$ verifies $\theta'(t)\neq 0$ for all $t$. Thus each angle is attained at most once. There are two angles corresponding to places where each vector $v$ might be tangent (consider $v$ and $-v$). This proves the Lemma.
%\end{proof}

\subsection{A direct application of Hypotheses $(\mathcal{H}1)$}\label{sec:dirapp}

We consider the following large class of systems of conservation laws. Given $\eta\colon \mathbb{R}^2\to\mathbb{R}$ strictly convex, we can write the conservation law
\begin{align}
    \begin{cases}\label{generalpsystem}
    \partial_t v+\partial_x \eta_u(v,u)=0,\\
    \partial_t u +\partial_x \eta_v(v,u)=0,
    \end{cases}
\end{align}
with an entropy given by $\eta$ itself, and entropy-flux $q(v,u)=\eta_u(v,u)\eta_v(v,u)$. As we will show, this is a wide generalization of the $p$-system, in particular allowing for entropy functionals which have nonvanishing mixed partial derivatives.

For this system, the characteristic speeds are given by
\begin{equation}
\begin{aligned}\label{generalpcharspeeds}
    \lambda_1(v,u)=\eta_{vu}-\sqrt{\eta_{uu}\eta_{vv}},\\
    \lambda_2(v,u)=\eta_{vu}+\sqrt{\eta_{uu}\eta_{vv}},
\end{aligned}
\end{equation}
Note that due to the strict convexity of $\eta$, $\eta_{uu}\eta_{vv}>0$ and thus \eqref{generalpcharspeeds} tells us that the system \eqref{generalpsystem} is always strictly hyperbolic.

Moreover, the right eigenvectors (of the Jacobian of the flux) are given by 
\begin{equation}
    \begin{aligned}\label{eigenvectorsgeneralp}
    r_1(v,u)&= \begin{bmatrix}
       -\sqrt{\frac{\eta_{uu}}{\eta_{vv}}}\\[0.3em]
       -1 
     \end{bmatrix},\hspace{.3in}
       r_2(v,u)&= \begin{bmatrix}
       \sqrt{\frac{\eta_{uu}}{\eta_{vv}}}\\[0.3em]
       -1 
     \end{bmatrix},
\end{aligned}
\end{equation}
and the left eigenvectors are given by 
\begin{equation}
    \begin{aligned}
    l_1(v,u)&= \begin{bmatrix}
       -1 & -\sqrt{\frac{\eta_{uu}}{\eta_{vv}}}
     \end{bmatrix},\hspace{.3in}
      l_2(v,u)&= \begin{bmatrix}
       1 & -\sqrt{\frac{\eta_{uu}}{\eta_{vv}}}
     \end{bmatrix},
\end{aligned}
\end{equation}
where for simplicity we have not normalized them.

A direct computation shows that the system \eqref{generalpsystem} is genuinely nonlinear when
\begin{align}\label{gennonlincond}
    \frac{1}{\sqrt{\eta_{vv}}}\Big[\eta_{vvv}\eta_{uu}+3\eta_{vv}\eta_{uuv}\Big]\pm \frac{1}{\sqrt{\eta_{uu}}}\Big[\eta_{vv}\eta_{uuu}+3\eta_{uu}\eta_{vvu}\Big] \neq 0,
\end{align}
where this must hold for both the $+$ and $-$ in the case of the $\pm$ used in \eqref{gennonlincond}.

Another direct computation shows that the system \eqref{generalpsystem} verifies the Smoller-Johnson condition \eqref{SJcond} when
\begin{align}\label{SJcondgeneralp}
\pm \frac{\eta_{uu}}{\eta_{vv}}\eta_{vvu}+\sqrt{\frac{\eta_{uu}}{\eta_{vv}}}\eta_{vuu}\mp\eta_{uuu}-\Big(\frac{\eta_{uu}}{\eta_{vv}}\Big)^{\frac{3}{2}}\eta_{vvv} >0,
\end{align}
where when $+$ is chosen in $\pm$, $-$ must be chosen in $\mp$ and vice-versa (to verify the Smoller-Johnson condition \eqref{SJcondgeneralp} must hold for both of these cases).

\begin{figure}[tb]
 % \centering
      \includegraphics[width=.3\textwidth]{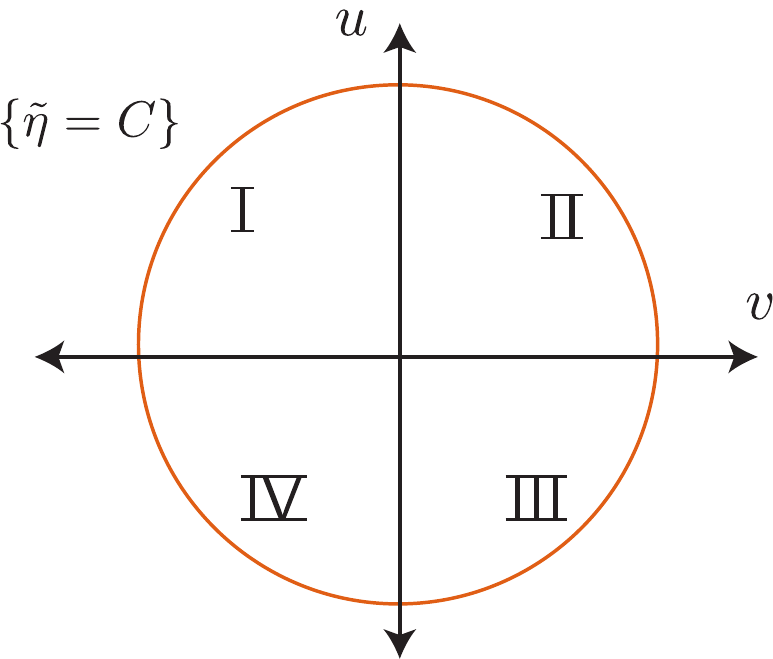}%\hspace{.8in}
  \caption{From the proof of \Cref{generalpmeetsconds}, an illustration of the decomposition of the level set of $\tilde\eta$ into the pieces $\Romanbar{I},\ldots,\Romanbar{IV}$.}\label{psystemdecompfig}
\end{figure}

\begin{lemma}\label{generalpmeetsconds}
Assume the system \eqref{generalpsystem} is  genuinely nonlinear, i.e. \eqref{gennonlincond} holds, and verifies the Smoller-Johnson condition, i.e. \eqref{SJcondgeneralp} holds. Further, assume that the characteristic speeds \eqref{generalpcharspeeds}  satisfy $\lambda_1(v,u) \leq 0 \leq \lambda _2(v,u)$ for all $v,u$. Then the system \eqref{generalpsystem} verifies Hypotheses $(\mathcal{H}1)$.
\end{lemma}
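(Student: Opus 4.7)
The plan is to verify each of the five items of Hypotheses $(\mathcal{H}1)$ in turn, with essentially all the actual work concentrated on items \eqref{item21} and \eqref{item51}. Items \eqref{item11}, \eqref{item61}, \eqref{item41} are immediate: strict hyperbolicity follows from the strict convexity of $\eta$ (which gives $\eta_{uu}\eta_{vv}>0$) combined with the explicit formulas \eqref{generalpcharspeeds}; genuine nonlinearity and the Smoller-Johnson condition are precisely the standing hypotheses \eqref{gennonlincond} and \eqref{SJcondgeneralp}; the sign condition $\lambda_1\le 0\le\lambda_2$ is in the statement of the Lemma; and $\eta$ itself is the required strictly convex entropy.

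For the sector condition \eqref{item21}, I would exploit the fact that by \eqref{eigenvectorsgeneralp} both right eigenvectors $r_1,r_2$ have the same (nonzero) second component while their first components have opposite signs. This motivates the constant choice
\begin{align*}
w_1=(1,0),\qquad w_2=(0,-1),
\end{align*}
and a one-line check gives $r_1\cdot w_1<0$, $r_1\cdot w_2>0$, and $r_2\cdot w_i>0$ for $i=1,2$, matching \Cref{sector_def} (up to the sign convention $r_i\cdot\nabla\lambda_i>0$, which only possibly flips $w_1$).

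The heart of the proof is item \eqref{item51}, and it rests on a single algebraic observation. Since the flux of \eqref{generalpsystem} is $f=(\eta_u,\eta_v)$ and the entropy-flux is $q=\eta_u\eta_v$, one computes
\begin{align*}
\tilde q \;=\; q+c_1 f_1+c_2 f_2 \;=\; \eta_u\eta_v+c_1\eta_u+c_2\eta_v \;=\; (\eta_v+c_1)(\eta_u+c_2)-c_1c_2 \;=\; \tilde\eta_v\,\tilde\eta_u-c_1c_2.
\end{align*}
I would then apply the canonical decomposition \eqref{canonical_decomp} of the convex curve $\{\tilde\eta=C\}$ with the vectors $w_1,w_2$ above. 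Since the outward unit normal at a regular point is proportional to $\nabla\tilde\eta=(\tilde\eta_v,\tilde\eta_u)$, the four pieces $\Romanbar{I}^{\mathrm{o}},\ldots,\Romanbar{IV}^{\mathrm{o}}$ correspond exactly to the four open quadrants of the $(\tilde\eta_v,\tilde\eta_u)$-plane. Reading off the signs yields $\tilde\eta_v\tilde\eta_u<0$ on $\Romanbar{I}^{\mathrm{o}}\cup\Romanbar{III}^{\mathrm{o}}$ and $\tilde\eta_v\tilde\eta_u>0$ on $\Romanbar{II}^{\mathrm{o}}\cup\Romanbar{IV}^{\mathrm{o}}$, so
\begin{align*}
\tilde q(\Romanbar{I}^{\mathrm{o}}),\,\tilde q(\Romanbar{III}^{\mathrm{o}})\subset(-\infty,-c_1c_2),\qquad \tilde q(\Romanbar{II}^{\mathrm{o}}),\,\tilde q(\Romanbar{IV}^{\mathrm{o}})\subset(-c_1c_2,+\infty).
\end{align*}
These two open half-lines are disjoint, so the only pairs whose $\tilde q$-images can intersect nontrivially are $(\Romanbar{I},\Romanbar{III})$ and $(\Romanbar{II},\Romanbar{IV})$, which is exactly \eqref{item51}; see \Cref{psystemdecompfig}.

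I expect the only delicate point to be a small bookkeeping remark about the (finitely many) boundary points where $\nabla\tilde\eta$ lies on one of the coordinate axes and $\tilde q=-c_1c_2$: when passing from the open pieces $\Romanbar{I}^{\mathrm{o}},\ldots$ to the closed pieces $\Romanbar{I},\ldots$ one assigns such points cyclically, which keeps all ``wrong'' intersections trivial. If $\{\tilde\eta=C\}$ is unbounded for some choices of $c_1,c_2$, then some of the four pieces may be empty, which only simplifies the conclusion. The essential content of the proof is the factorization $\tilde q=\tilde\eta_u\tilde\eta_v-c_1c_2$, which is where the special structure $f=(\eta_u,\eta_v)$ of \eqref{generalpsystem} really enters and turns \eqref{item51} into the immediate sign-of-quadrant statement above.
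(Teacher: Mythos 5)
Your proposal is correct and follows essentially the same route as the paper: items \eqref{item11}, \eqref{item61}, \eqref{item41} are immediate, the sector condition is verified with the axis-parallel vectors $w_1=(1,0)$, $w_2=(0,-1)$, and item \eqref{item51} is read off from the sign of $\tilde\eta_u\tilde\eta_v$ on the four pieces of the canonical decomposition \eqref{canonical_decomp}, with the same brief bookkeeping for the transition points where $\nabla\tilde\eta$ is axis-parallel. The only cosmetic difference is that the paper reduces to $c_1=c_2=0$ by absorbing $c\cdot U$ into the (arbitrary) strictly convex entropy and then compares the sign of the level-set slope with $\tilde q=\eta_u\eta_v$, whereas you keep general $c$ via the explicit factorization $\tilde q=\tilde\eta_u\tilde\eta_v-c_1c_2$; these are the same argument.
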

\begin{remark}
In general, without the assumptions in Lemma \ref{generalpmeetsconds}, the system  \eqref{generalpsystem} does not have to satisfy Hypotheses $(\mathcal{H}1)$.% but does verify Hypotheses $(\mathcal{H}2)$. For example, perhaps \eqref{generalpcharspeeds} do not verify Hypotheses $(\mathcal{H}1)$ \eqref{item61}, or perhaps \eqref{SJcondgeneralp} does not hold. For a particular case when the $p$-system \emph{does verify} Hypotheses $(\mathcal{H}2)$, see \Cref{psystemthird}.
\end{remark}
\begin{proof}[Proof of \Cref{generalpmeetsconds}]

Hypotheses $(\mathcal{H}1)$ \eqref{item11}, \eqref{item61}, \eqref{item41} are immediate.

Remark that the  $r_1$ eigenvector always has positive slope, while the $r_2$ eigenvector always has negative slope (see \eqref{eigenvectorsgeneralp}). Thus, \eqref{generalpsystem} verifies the sector condition (\Cref{sector_def}) with $w_1=(1,0)$ and $w_2=(0,-1)$ parallel to the coordinate axes. This shows Hypotheses $(\mathcal{H}1)$ \eqref{item21}. 

Given the entropy $\eta$ and entropy-flux $q$, consider $\tilde \eta$ and $\tilde q$ (see \eqref{tdef}). Remark that due to the lack of assumptions on $\eta$ and $q$, we can also assume without loss of generality that in fact $c_1=c_2=0$ in \eqref{tdef}.

We now show Hypotheses $(\mathcal{H}1)$ property \eqref{item51}. Consider the level set $\{\tilde\eta=C\}$ where $C\in\mathbb{R}$ is not a global minimum of $\tilde \eta$. 

Using the vectors $w_1$ and $w_2$ we decompose the level set $\{\tilde\eta=C\}$ as in \eqref{canonical_decomp}, see \Cref{psystemdecompfig}. 
%By using \Cref{geo_convex}, we can choose the pieces $\Romanbar{I},\ldots,\Romanbar{IV}$ such that in the interior of  each of the pieces $\Romanbar{I},\ldots,\Romanbar{IV}$ (viewed as subsets of the one dimensional manifold $\{\tilde\eta=C\}$), the slope of the level set is either everywhere positive or everywhere negative. By the remarks after Hypotheses $(\mathcal{H}1)$, we do not worry about the boundary points of the sets $\Romanbar{I},\ldots,\Romanbar{IV}$. Note that on the interiors of the pieces $\Romanbar{I},\ldots,\Romanbar{IV}$, no point has a tangent line which is horizontal or vertical. 
Then, locally, for each $(v_0,u_0)\in\Romanbar{I}^{\mathrm{o}}\cup\Romanbar{II}^{\mathrm{o}}\cup\Romanbar{III}^{\mathrm{o}}\cup\Romanbar{IV}^{\mathrm{o}}$ (away from points with vertical tangents), we can write the level set $\{\tilde\eta=C\}$ locally as a graph of a function, and talk about the slope of this graph. More precisely, the Implicit Function Theorem gives a real-valued function $g$ defined locally around $v_0$ such that $\tilde\eta(v,g(v))=C$ for $v$ near $v_0$. Furthermore, the derivative is given by
\begin{align}
    g'(v)=\frac{\tilde\eta_v(v,g(v))}{\tilde\eta_u(v,g(v))}.
\end{align}
Remark that the sign of $g'(v)$ is the same as the sign of $\tilde q=\tilde\eta_v\tilde\eta_u$.

At the points on $\{\tilde\eta=C\}$ with horizontal or vertical tangents, $\tilde q$ must be zero by continuity. If any two of the sets $\Romanbar{I}^{\mathrm{o}}$, $\Romanbar{II}^{\mathrm{o}}$, $\Romanbar{III}^{\mathrm{o}}$ and $\Romanbar{IV}^{\mathrm{o}}$ share a boundary point, we can make a choice about which of the two sets the boundary point should belong to. We make the choice in such a way to ensure that Hypotheses $(\mathcal{H}1)$ property \eqref{item51} holds.

%Thus, recalling that in each of the pieces $\Romanbar{I},\ldots,\Romanbar{IV}$ the slope of the level set is either everywhere positive or everywhere negative, we have shown Hypotheses $(\mathcal{H}1)$ property \eqref{item51}. 
\end{proof}

\subsection{$p$-system}

Consider the $p$-system, written (following Smoller \cite{MR1301779})
\begin{align}\label{psystem}
    \begin{cases}
    \partial_t v-\partial_x u=0,\\
    \partial_t u +\partial_x p(v)=0,
    \end{cases}
\end{align}
for $t>0$ and $x\in\mathbb{R}$ and where $p'<0$ and $p''>0$. The natural entropy is 
\begin{align}
    \eta(v,u)&\coloneqq \frac{u^2}{2}-\int\limits^{v}p(s)\,ds,
\shortintertext{and the associated entropy-flux is}
q(v,u)&\coloneqq up(v).
\end{align}

For the $p$-system, we can take $\mathcal{V}\coloneqq \mathbb{R}^{2}$. The $p$-system is in Lagrangian coordinates.

Remark that the $p$-system is a special case of \eqref{generalpsystem}, but with flux $(-\eta_u,-\eta_v)$ in place of $(\eta_u,\eta_v)$ (thus, switching the sign in \eqref{SJcondgeneralp}). We conclude from \eqref{generalpcharspeeds} that the $p$-system with $p'<0$ is strictly hyperbolic. From \eqref{gennonlincond}, we conclude that with $p''>0$ or $p''<0$, the $p$-system is genuinely nonlinear. And lastly, from \eqref{SJcondgeneralp} (and remembering to switch the sign), we conclude that the $p$-system verifies the Smoller-Johnson condition when $p''<0$. When $p''>0$, the inequality in the Smoller-Johnson condition is flipped, i.e. $l_j D^2 f(r_i ,r_i)<0$. 

Thus, from \Cref{generalpmeetsconds} and \Cref{main_theorem1} we conclude that 
\begin{corollary}
    When strictly hyperbolic, i.e. $p'<0$, and genuinely nonlinear, i.e. $p''\neq0$, the $p$-system does not allow for $T_4$ configurations in its constitutive set.
\end{corollary}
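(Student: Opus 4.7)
The strategy is to reduce directly to \Cref{main_theorem1} via \Cref{generalpmeetsconds}, using the observation noted in the paragraphs preceding the corollary that the $p$-system is a special case of \eqref{generalpsystem} (modulo the sign flip coming from writing the flux as $(-\eta_u,-\eta_v)$ rather than $(\eta_u,\eta_v)$). I would proceed by verifying the hypotheses of \Cref{generalpmeetsconds} one by one for the natural entropy $\eta(v,u)=\tfrac12 u^2-\int^v p(s)\,ds$.

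Since $\eta_{uu}=1$, $\eta_{vv}=-p'(v)>0$, and $\eta_{uv}=0$, the entropy is strictly convex on $\R^2$. Formula \eqref{generalpcharspeeds} gives characteristic speeds $\pm\sqrt{-p'(v)}$, which are distinct (so strict hyperbolicity holds) and symmetric about zero, hence $\lambda_1\le 0\le \lambda_2$ as required. Moreover, the only nonvanishing third partial of $\eta$ entering the genuine nonlinearity criterion \eqref{gennonlincond} is $\eta_{vvv}=-p''(v)$, so that criterion reduces precisely to $p''\neq 0$, which is our standing hypothesis.

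It remains to handle the Smoller-Johnson condition. After the sign swap induced by the flux sign flip, \eqref{SJcondgeneralp} collapses to a single inequality whose sign is dictated by $p''$. When $p''<0$ the condition \eqref{SJcondgeneralp} holds as stated, \Cref{generalpmeetsconds} applies directly to yield Hypotheses $(\mathcal{H}1)$, and \Cref{main_theorem1} excludes $T_4$ configurations in $\mathcal{K}_{f,\eta,q}$. When $p''>0$ the Smoller-Johnson inequality holds with reversed sign, i.e.\ $l_jD^2f(r_i,r_i)<0$; in this subcase I would invoke the obvious sign-flipped variant of \Cref{generalpmeetsconds} to obtain the corresponding sign-flipped analogue of Hypotheses $(\mathcal{H}1)$, and then appeal to the remark immediately following \Cref{main_theorem1}, which asserts that the $T_4$-nonexistence conclusion is insensitive to the sign of the Smoller-Johnson inequality.

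There is no substantive obstacle here, since the heavy lifting has been done in \Cref{generalpmeetsconds} and \Cref{main_theorem1}; the only care required is consistent bookkeeping of the sign flip and the split into the two subcases $p''<0$ and $p''>0$, the latter being the physically standard convention for the $p$-system.
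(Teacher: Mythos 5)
Your proposal is correct and follows essentially the same route as the paper: verifying strict hyperbolicity, genuine nonlinearity via \eqref{gennonlincond}, the sign condition on the characteristic speeds, and the Smoller--Johnson condition \eqref{SJcondgeneralp} (with the sign switch coming from the flux $(-\eta_u,-\eta_v)$), then invoking \Cref{generalpmeetsconds} and \Cref{main_theorem1}, using the remark after \Cref{main_theorem1} for the case $p''>0$ where the Smoller--Johnson inequality is reversed. No gaps.
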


Even though this corollary applies to the $p$-system in general, in order to gain intuition for Hypotheses $(\mathcal{H}2)$ (in particular item \eqref{item3}), we now show that in certain cases the $p$-system also satisfies Hypotheses $(\mathcal{H}2)$.

\begin{lemma}\label{psystemthird}
The $p$-system \eqref{psystem} with  $p',p''<0$ and $p'''>0$ verifies Hypotheses $(\mathcal{H}2)$.
\end{lemma}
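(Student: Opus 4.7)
The plan is to verify hypotheses $(\mathcal{H}2)$ (i)--(v) for the $p$-system under these sign assumptions. Items (i)--(iii) are classical consequences of strict hyperbolicity ($p'<0$) and genuine nonlinearity ($p''\neq 0$): explicit global parameterizations of the two shock curves emanating from any state in Lagrangian coordinates, together with the Lax and Liu entropy conditions, may be taken directly from standard references on the $p$-system (e.g.~\cite{MR1301779}), and non-perverseness follows from the known monotonicity of shock curves in one coordinate direction. Item (iv) is immediate since the Hessian $\nabla^{2}\eta = \mathrm{diag}(-p'(v),\, 1)$ is positive definite under $p' < 0$.

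The main work is item (v). First I would rewrite the level set $\{\tilde\eta = C\}$ in the form $(u+c_{2})^{2} = F(v)$ with
\[
F(v) \;:=\; 2(C + P(v) - c_{1} v) + c_{2}^{2},\qquad P'=p;
\]
since $F''(v) = 2p'(v) < 0$, the set $\{F > 0\}$ is an open interval $(v_{-}, v_{+})$ (possibly unbounded) and the level set is bounded if and only if this interval is bounded. Computing $\nabla\tilde q$ and $\nabla\tilde\eta$ and eliminating the Lagrange multiplier reduces the critical-point condition $\nabla\tilde q \parallel \nabla\tilde\eta$ to the scalar equation
\[
G(v) \;:=\; (p(v) - c_{1})^{2} + p'(v)\, F(v) \;=\; 0.
\]
Each zero of $G$ in the open interval $(v_{-}, v_{+})$ corresponds to \emph{two} critical points of $\tilde q$ on the level set (the two values of $u + c_{2} = \pm\sqrt{F(v)}$), while a zero at an endpoint forces both $F(v_{\pm}) = 0$ and $p(v_{\pm}) = c_{1}$ and gives just one.

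The key step is a sign analysis of $G$ and its derivatives, using the strict monotonicity of $p$. Let $v_{0}$ denote the (unique, when it exists) point with $p(v_{0}) = c_{1}$. For a bounded level set the strict concavity of $F$ together with $F'(v_{0}) = 2(p(v_{0}) - c_{1}) = 0$ places $v_{0}$ in $(v_{-}, v_{+})$, with $G(v_{0}) = p'(v_{0}) F(v_{0}) < 0$ and $G(v_{\pm}) = (p(v_{\pm})-c_{1})^{2} > 0$. Direct differentiation yields
\[
G'(v) = 4(p - c_{1}) p' + p'' F, \qquad G''(v) = 4(p')^{2} + 6(p - c_{1}) p'' + p''' F.
\]
On $(v_{-}, v_{0})$ one has $p - c_{1} > 0$, so both summands in $G'$ are negative (using $p', p'' < 0$ and $F \geq 0$); hence $G$ is strictly decreasing there and admits exactly one zero. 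On $(v_{0}, v_{+})$ one has $p - c_{1} < 0$, so $6(p - c_{1}) p'' > 0$, while $4(p')^{2} > 0$ and $p''' F \geq 0$ by the remaining hypotheses; hence $G$ is strictly convex on $(v_{0}, v_{+})$ and, going from $G(v_{0}) < 0$ to $G(v_{+}) > 0$, again admits exactly one zero. This gives precisely two zeros of $G$ in $(v_{-}, v_{+})$, no boundary zeros, and therefore exactly four critical points of $\tilde q$ on any bounded level set.

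For unbounded level sets at least one of the endpoints $v_{\pm}$ is $\pm\infty$, which destroys the endpoint positivity $G(v_{\pm}) > 0$ used above. The plan is to argue that on the ``missing'' side the same monotonicity/convexity of $G$ forces either the elimination of the corresponding zero or its collapse into the remaining phase, yielding strictly fewer than four critical points. The main obstacle I anticipate is precisely this unbounded case: one must handle the degenerate configuration in which $v_{0}$ leaves the domain $(v_{-}, v_{+})$, as well as the limits $G(v) \to ?$ at infinity, cleanly enough to conclude ``strictly less than four''; the strict convexity on $(v_{0}, v_{+})$ combined with the strict monotonicity on $(v_{-}, v_{0})$ should make this bookkeeping tractable.
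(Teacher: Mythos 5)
Your reduction of item (v) to the scalar equation $G(v)\coloneqq (p(v)-c_1)^2+p'(v)F(v)=0$ on the interval $\{F>0\}$ is a genuinely different, and in the bounded case more explicit, route than the paper's: the paper characterizes critical points via Lagrange multipliers as tangencies of the level set with the eigenvector fields, splits the level set into the four arcs I--IV of the sector decomposition, computes $r_i^\top\nabla^2\tilde q\, r_i$ and uses $p'''>0$ to show each arc carries at most one extremal critical point, and then handles unbounded level sets by observing that at least one arc is empty. Your computations $G'=4(p-c_1)p'+p''F$, $G''=4(p')^2+6(p-c_1)p''+p'''F$ are correct, the sign analysis on $(v_-,v_0)$ and $(v_0,v_+)$ is sound, and the accounting of interior versus endpoint zeros (two points $u+c_2=\pm\sqrt{F}$ per interior zero, one at an endpoint, endpoint zeros forcing $p(v_\pm)=c_1$) is right; so the bounded case is complete and even gives the exact count of four, matching the paper's example.

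The genuine gap is the unbounded case, which you flag but do not close, and it is not mere bookkeeping: Hypotheses $(\mathcal{H}2)$ (v) demands \emph{strictly fewer than four} extremal critical points precisely there. Your tools do finish the case where the level set is unbounded on the side where $p>c_1$ (there $F'>0$ forces $p>c_1$ on the whole interval, so $G'<0$, at most one zero, at most two critical points). But when the interval extends to $-\infty$ one necessarily has $p<c_1$ throughout (otherwise concavity of $F$ forces $F\to-\infty$), so $G''>0$ on the \emph{entire} interval and $G>0$ at the finite right endpoint; strict convexity alone then allows two interior zeros, i.e.\ four critical points, which is exactly the configuration you must exclude. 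To rule it out you need additional input — e.g.\ control of $G$ (equivalently of $(p-c_1)^2$ versus $-p'F$) as $v\to-\infty$, or an argument that two of the four points cannot both be local extrema — or you can fall back on the paper's mechanism: on such an unbounded level set one branch has everywhere negative slope and the other everywhere positive slope, so at least one of the four arcs is empty, and the per-arc ``at most one extremum'' bound (which in the paper uses $p'''>0$ through the monotonicity of $r_1^\top\nabla^2\tilde q\,r_1$ along the level set) yields the strict count. As written, the proposal does not establish item (v) for unbounded level sets. The treatment of items (i)--(iii) by appeal to standard facts about the $p$-system is acceptable in spirit (the paper instead invokes Lemma \ref{h1h2equiv}, which applies here because $p''<0$ gives the Smoller--Johnson condition), though the non-perverse Hugoniot locus condition deserves at least the monotonicity and Lax/Liu statements spelled out rather than a bare citation.
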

\begin{proof}
Recall from Lemma \ref{h1h2equiv} that the hyperbolic part of Hypotheses $(\mathcal{H}2)$ (items \eqref{item2},\eqref{item1}, \eqref{item5}) follow from those of Hypotheses $(\mathcal{H}1)$ (items \eqref{item11},\eqref{item21}, \eqref{item61}). Therefore, in light of the Corollary above, it suffices to check item \eqref{item3} from Hypotheses $(\mathcal{H}2)$.

Consider the level set $\{\tilde\eta=C\}$ for some $C\in\mathbb{R}$. We decompose this level set into four pieces  $\Romanbar{I}$, $\Romanbar{II}$, $\Romanbar{III}$ and $\Romanbar{IV}$ as in \eqref{decomp_K_def}. Remark that for the $p$-system the eigenvector $r_1$ always has a strictly positive slope, while the eigenvector $r_2$ always has a strictly negative slope. Thus we can choose $\Romanbar{I}$, $\Romanbar{II}$, $\Romanbar{III}$ and $\Romanbar{IV}$ such that the slope of $\Romanbar{I}$ and $\Romanbar{III}$ is always nonnegative, and the slope of $\Romanbar{II}$ and $\Romanbar{IV}$ is always nonpositive.

For a fixed $C\in\mathbb{R}$, we have the following parameterizations of the level sets  $\{\tilde\eta=C\}$,
\begin{align}\label{params}
    u=-c_2\pm\sqrt{c_2^2-2(-\int\limits^v p(s)\,ds+c_1 v-C)}.
\end{align}

Note that from \eqref{params}, along the ``top'' (take the $+$ in \eqref{params}) of $\{\tilde\eta=C\}$, the slope of the level set is negative when $(p(v)-c_1)<0$. Similarly, along the ``bottom'' of $\{\tilde\eta=C\}$, the slope of the level set is negative when $(p(v)-c_1)>0$.

We remark that the following analysis of critical points is similar to what will come next in the proof of \Cref{main_theorem1}.

We want to find critical points of $\tilde q$ restricted to a level set of the function $\tilde \eta$. This is a constrained optimization problem, and we use Lagrange multipliers. Thus, $U\in\mathcal{V}$ is a critical point if and only if
\begin{align}
    \nabla \tilde q(U) =\lambda \nabla\tilde\eta (U).
\end{align}
for some $\lambda\in\mathbb{R}$. Due to the compatibility condition $\nabla\tilde q= \nabla\tilde \eta Df$ between $\tilde \eta$ and $\tilde q$, we have $\nabla\tilde \eta(U) Df(U)=\lambda \nabla\tilde\eta (U)$. Thus, $\nabla\tilde\eta(U)$ is a left eigenvector of $Df$.

Note then that due to $l_ir_j=0$ for $i\neq j$, the critical points of $\tilde q$ restricted to a level set of the function $\tilde \eta$ occur when one of the eigenvector fields $r_i$ is parallel with the level set of $\tilde\eta$.

Note that 
\begin{align} 
    r_1^\top \nabla^2\tilde q r_1&=(u+c_2)p''(v) +2p'(v)\sqrt{-p'(v)} \label{secondr1}\\
    r_2^\top \nabla^2\tilde q r_2&=(u+c_2)p''(v) -2p'(v)\sqrt{-p'(v)}\label{secondr2},
\end{align}

Then, from \eqref{secondr2} it is clear that $\tilde q$ will have at most one critical point in $\Romanbar{IV}$. This is because, by definition of the set $\Romanbar{IV}$, the $r_1$ vector will always be transverse to the boundary of the level set of $\tilde\eta$. Hence, critical points will occur only at points of $\Romanbar{IV}$ where the level set of $\tilde\eta$ is parallel to $r_2$. And we know that at one of these critical points, $\tilde q$ will be convex (when restricted to the level set of $\tilde\eta$).  Furthermore, we cannot have a function which is strictly convex at two adjacent critical points. Thus, we conclude there is at most one critical point in $\Romanbar{IV}$. This critical point, if it exists, will be a minimum. 

Likewise, due to \eqref{secondr1}, $\tilde q$ will have at most one critical point in $\Romanbar{I}$. This critical point, if it exists, will be a maximum.

Thus, if $\tilde q$ has a critical point in $\Romanbar{III}$, at least one such critical point will be at a point where $r_1^\top \nabla^2\tilde q r_1 \leq 0$. Recall, we cannot have a function which is strictly convex at two adjacent critical points.

Note also, in $\Romanbar{III}$
\begin{align}
    \frac{\textrm{d}}{\textrm{d}v}\Big[ r_1^\top \nabla^2\tilde q r_1 \Big]
    =\frac{\textrm{d}u}{\textrm{d}v}p''(v)+(u+c_2)p'''(v)+3\sqrt{-p'(v)}p''(v) < 0,
\end{align}
where we view $u$ as a function of $v$ (according to \eqref{params}), and we have used \eqref{params} to show that $u+c_2\leq 0$ in $\Romanbar{III}$.

Thus, once we hit a critical point in $\Romanbar{III}$ such that $r_1^\top \nabla^2\tilde q r_1 \leq 0$, we have $r_1^\top \nabla^2\tilde q r_1 < 0$ for all other potential critical points in $\Romanbar{III}$. Remark that as above, we cannot have a function which is strictly concave at two adjacent critical points.

Similarly, in $\Romanbar{II}$
\begin{align}
    \frac{\textrm{d}}{\textrm{d}v}\Big[ r_2^\top \nabla^2\tilde q r_2 \Big]
    =\frac{\textrm{d}u}{\textrm{d}v}p''(v)+(u+c_2)p'''(v)-3\sqrt{-p'(v)}p''(v) > 0.
\end{align}

Thus $\tilde q$ has at most one extremal critical point in each of the four pieces $\Romanbar{I}$, $\Romanbar{II}$, $\Romanbar{III}$ and $\Romanbar{IV}$. Moreover, if the level set of $\tilde\eta$ is unbounded, at least one of these four pieces will be empty. This shows Hypotheses $(\mathcal{H}2)$ \eqref{item3}.
\end{proof}
\begin{remark}
 Instead of using the direct computation \eqref{secondr1} and \eqref{secondr2}, we can use the Smoller-Johnson condition (see \eqref{SJcond}) to give a general proof of the number of critical points in $\Romanbar{I}$ and $\Romanbar{IV}$. In fact, we do this in the proof of \Cref{main_theorem1} (see \Cref{sec:proofmain_theorem1}).
\end{remark}

\subsection{Isentropic Euler}

We also consider, in Eulerian coordinates, isentropic Euler,
\begin{align}\label{isentropicEulergeneral}
     \begin{cases}
    \partial_t \rho-\partial_x (\rho v)=0,\\
    \partial_t (\rho v) +\partial_x [\rho v^2 +P(\rho)]=0.
    \end{cases}
\end{align}

We consider a large class of pressure laws $P$. We assume $P\colon [0,\infty)\to\mathbb{R}$ is smooth. The condition $P'(\rho)>0$ ensures the system \eqref{isentropicEulergeneral} is strictly hyperbolic, and $[\rho P(\rho]''\neq 0$ ensures the system is genuinely nonlinear.  The vacuum state is $\rho=0$. We work away from vacuum, and only consider $\rho>0$.

The characteristic speeds for this system are 
\begin{align}
    \lambda_1(\rho,\rho v)=v-\sqrt{P'(\rho)}, \hspace{.3in} \lambda_2(\rho,\rho v)=v+\sqrt{P'(\rho)}.
\end{align}

The corresponding right eigenvectors are given by, 

\begin{equation}
    \begin{aligned}\label{eigenvectorsisentropicEuler}
    r_1(\rho,\rho v)&= \begin{bmatrix}
       -1\\[0.3em]
       -v+\sqrt{P'(\rho)} 
     \end{bmatrix}, \hspace{.3in}
       r_2(\rho,\rho v)&= \begin{bmatrix}
       1\\[0.3em]
       v+\sqrt{P'(\rho)} 
     \end{bmatrix},
\end{aligned}
\end{equation}
where for simplicity we have not normalized them. Observe that the system \eqref{isentropicEulergeneral} does not verify the sector condition (\Cref{sector_def}) and, moreover, does not verify the sign condition on the eigenvalues required by Hypotheses $(\mathcal{H}1)$ \eqref{item61}.

The system \eqref{isentropicEulergeneral} is endowed with a natural entropy, entropy-flux pair,
\begin{equation}
\begin{aligned}\label{entropyisentropicgeneral}
    \eta&=\frac{1}{2}\rho v^2 +S(\rho),\\
    q&= \frac{1}{2}\rho v^3+\rho u S'(\rho),
\end{aligned}
\end{equation}
where $S$ verifies $S''(\rho)=\rho^{-1} P'(\rho)>0$.

The system \eqref{isentropicEulergeneral} obtains its canonical form by changing the state variables from $(\rho,v)$ to $(\rho,m)$, where $m=\rho v$ is the momentum density. In the canonical variables $(\rho,m)$, the entropy $\eta$ is strictly convex.

We have as a special case of \eqref{isentropicEulergeneral}, the rectilinear isentropic flow of an ideal gas. Following Dafermos \cite[p.~230]{dafermos_big_book}, we write
\begin{align}\label{isentropicEuler}
     \begin{cases}
    \partial_t \rho-\partial_x (\rho v)=0,\\
    \partial_t (\rho v) +\partial_x [\rho v^2 +\kappa \rho^\gamma]=0,
    \end{cases}
\end{align}
for a constant $\kappa>0$, and for $\gamma>1$.  For $\gamma>1$, the system \eqref{isentropicEuler} is strictly hyperbolic and genuinely nonlinear. 

The system \eqref{isentropicEuler} is endowed with the following entropy, entropy-flux pair,
\begin{align}
    \eta&=\frac{1}{2}\rho v^2 +\frac{\kappa}{\gamma-1}\rho^\gamma,\\
    q&= \frac{1}{2}\rho v^3+\frac{\kappa\gamma}{\gamma-1}\rho^\gamma v.
\end{align}

Remark that taking $\gamma=2$ in \eqref{isentropicEuler} gives the \emph{system of shallow water waves.}

\vspace{.1in}

As remarked earlier, the system \eqref{isentropicEulergeneral} does not verify Hypotheses $(\mathcal{H}1)$. However, after changing from Eulerian to Langrangian coordinates, the transformed system does verify Hypotheses $(\mathcal{H}1)$. Thus, as a corollary of \Cref{changeofCoordsnoT4}, we have the following result:

\begin{corollary}\label{CORchangeofCoordsnoT4}
When the system \eqref{isentropicEulergeneral} is strictly hyperbolic, i.e. $P'(\rho)>0$, and genuinely nonlinear, i.e. $[\rho P(\rho]''\neq 0$, the constitutive set for this equation $\mathcal{K}_{f,\eta,q}$ (with entropy, entropy-flux given by \eqref{entropyisentropicgeneral}) does not contain $T_4$ configurations.
\end{corollary}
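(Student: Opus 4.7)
The plan is to apply Theorem \ref{changeofCoordsnoT4}. First, I transform the Eulerian isentropic Euler system \eqref{isentropicEulergeneral} into Lagrangian coordinates via \Cref{LEcoordinates}. The resulting system is the $p$-system \eqref{psystem} with specific volume $v = 1/\rho$ and pressure $p(v) = P(1/v)$. A direct computation yields
\begin{align*}
p'(v) = -\rho^2 P'(\rho), \qquad p''(v) = \rho^3\bigl[2P'(\rho) + \rho P''(\rho)\bigr] = \rho^3\,[\rho P(\rho)]'',
\end{align*}
so the hypothesis $P'(\rho) > 0$ gives $p'(v) < 0$ (strict hyperbolicity in Lagrangian form), and the hypothesis $[\rho P(\rho)]'' \neq 0$ gives $p''(v) \neq 0$ (genuine nonlinearity in Lagrangian form).

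Next, I invoke the corollary in \Cref{sec:dirapp} applied to the $p$-system: a $p$-system with $p' < 0$ and $p'' \neq 0$ verifies Hypotheses $(\mathcal{H}1)$. (This uses Lemma \ref{generalpmeetsconds}, after noting that the $p$-system is obtained from the generalized $p$-system \eqref{generalpsystem} by a sign change which flips the inequality in the Smoller-Johnson condition---but this flipped version is permitted by the remark after \Cref{main_theorem1}.) In particular, by Lemma \ref{h1h2equiv}, the Lagrangian system also satisfies the hyperbolic part of Hypotheses $(\mathcal{H}2)$, which includes the non-perverse Hugoniot locus condition of \Cref{nonperverse}.

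It remains to verify that the original Eulerian system meets the standing assumptions of Theorem \ref{changeofCoordsnoT4}, namely that it is endowed with a strictly convex entropy and satisfies the non-perverse Hugoniot locus condition. The natural entropy \eqref{entropyisentropicgeneral} is strictly convex in the canonical variables $(\rho, m)$ with $m = \rho v$, since $S''(\rho) = P'(\rho)/\rho > 0$. Per the remark following Theorem \ref{changeofCoordsnoT4}, the Eulerian--Lagrangian transformation of \cite{wagnergasdynamics} preserves the Rankine--Hugoniot condition and the structure of shock sets; hence the non-perverse Hugoniot locus condition, already verified for the Lagrangian $p$-system, transfers back to the Eulerian system \eqref{isentropicEulergeneral}. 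Applying Theorem \ref{changeofCoordsnoT4} concludes the proof.

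The only step that requires genuine care is the translation of the algebraic conditions $P'(\rho) > 0$ and $[\rho P(\rho)]'' \neq 0$ into the corresponding Lagrangian conditions on $p(v) = P(1/v)$; the remaining arguments are essentially bookkeeping, invoking machinery already assembled in the paper (\Cref{generalpmeetsconds}, \Cref{h1h2equiv}, \Cref{LEcoordinates}, and \Cref{changeofCoordsnoT4}).
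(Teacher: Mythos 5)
Your overall strategy matches the paper's: pass to Lagrangian coordinates via \Cref{LEcoordinates}, check that the transformed system is the $p$-system with $p(v)=P(1/v)$ (your computation $p'(v)=-\rho^2P'(\rho)$, $p''(v)=\rho^3[\rho P(\rho)]''$ is correct and is the right translation of the hypotheses), verify Hypotheses $(\mathcal{H}1)$ there via \Cref{generalpmeetsconds} (modulo the flipped Smoller--Johnson sign, handled by the remark after \Cref{main_theorem1}), and invoke \Cref{changeofCoordsnoT4}. However, there is a genuine gap in how you verify the hypotheses of \Cref{changeofCoordsnoT4} that concern the \emph{original Eulerian} system. You claim that the non-perverse Hugoniot locus condition, once known for the Lagrangian $p$-system, ``transfers back'' to \eqref{isentropicEulergeneral} because the Wagner transformation preserves the Rankine--Hugoniot condition and the structure of shock sets. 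But \Cref{nonperverse} is not a statement about shock \emph{sets} alone: it involves four points lying on a straight line in state space, the Liu condition $\frac{d}{ds}\sigma^k\neq 0$ on the \emph{shock speed}, the Lax E-condition \eqref{laxeHyP} on \emph{characteristic speeds}, and monotonicity of a state coordinate along the shock curve. None of these is manifestly invariant under the nonlinear change of state variables $(\rho,m)\mapsto(1/\rho,m/\rho)$: Eulerian and Lagrangian shock and characteristic speeds are genuinely different quantities (related through the mass flux across the shock), so monotonicity of $\sigma^k_{\mathrm{Lagr}}$ in $s$ does not by itself give monotonicity of $\sigma^k_{\mathrm{Euler}}$, and similarly for the Lax inequalities; and while collinearity does happen to be preserved here (the map is projective on the affine chart $\rho>0$), you neither note nor use this. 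The remark after \Cref{changeofCoordsnoT4} only asserts preservation of the Rankine--Hugoniot relation and of entropy/entropy-flux level sets, so it cannot carry the speed-dependent parts of \Cref{nonperverse}. The paper closes exactly this gap by verifying the condition \emph{directly} in Eulerian variables: it derives the explicit shock curves \eqref{shockcurveisentropic}, shows they are strictly monotone and confined to separate quadrants through $(\rho_L,v_L)$, and uses that for \eqref{isentropicEulergeneral} genuine nonlinearity is equivalent to strict monotonicity of the shock speed along each shock curve (hence Liu and Lax E hold). Your proof needs either this direct computation or a careful argument that each clause of \Cref{nonperverse} survives the change of variables.

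A secondary, more minor point: the entropy/entropy-flux pair that the Lagrangian system inherits from \eqref{entropyisentropicgeneral} via \Cref{LEcoordinates} is $\hat\eta(V)=\tfrac{v_2^2}{2}+S(v_1^{-1})v_1$ with its associated $\hat q$, which is \emph{not} literally the natural $p$-system pair (and, as the paper notes, one need not have $P(v_1^{-1})=\hat\eta_{v_1}$). So quoting the corollary of \Cref{sec:dirapp} for ``the $p$-system'' as a black box does not by itself give Hypotheses $(\mathcal{H}1)$ \eqref{item51} for the actual transformed pair; one must check, as the paper does, that the compatibility condition forces $\hat q=\hat\eta_{v_1}\hat\eta_{v_2}$ (equivalently, that $\hat\eta$ differs from the natural $p$-system entropy only by an affine function, which is harmless since \eqref{item51} is quantified over all $c_1,c_2$), after which the proof of \Cref{generalpmeetsconds} applies. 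This is fixable bookkeeping, but it should be said.
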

\begin{proof}
Our goal is to apply \Cref{changeofCoordsnoT4}.

First, we check that the non-perverse Hugoniot locus condition (\Cref{nonperverse}) is verified. 

For simplicity, let us work for the moment in $(\rho,v)$ space and not $(\rho,m)$ space.

If $\{\sigma,(\rho_L,v_L),(\rho_R,v_R)\}$ verify the Rankine-Hugoniot jump condition, then the following relation holds
\begin{align}
   (v_L-v_R)^2=\frac{(P(\rho_R)-P(\rho_L))(\rho_R-\rho_L)}{\rho_R\rho_L}.
\end{align}

Thus, for a fixed point in state space $(\rho_L,v_L)$, possible other points in state space which can  be connected to this point via a shock have the form $(\rho_R,v_R(\rho_R))$ where $\rho_R\in[0,\infty)$ and
\begin{align}\label{shockcurveisentropic}
    v_R(\rho_R)=v_L\pm \sqrt{\frac{(P(\rho_R)-P(\rho_L))(\rho_R-\rho_L)}{\rho_R\rho_L}}.
\end{align}

Due to the hyperbolicity assumption, \eqref{shockcurveisentropic} says that the two shock curves are strictly monotonic: we have the relation $\pm \sgn(\rho_R-\rho_L){v'}_{R}(\rho_R)>0$ \cite[p.~293]{Leger2011}. Furthermore, from \eqref{shockcurveisentropic} we see that the four curves given by $S^k_{(\rho_L,v_L)}$ for $k=1,2$ and for $s<0$ and $s>0$ each live in separate quadrants of $(\rho,v)$ space, determined by two lines parallel to the coordinate axes and which  go through the point $(\rho_L,v_L)$.

Moreover, genuine nonlinearity ensures that the Lax E-condition and Liu entropy condition are verified. In fact, for the system \eqref{isentropicEulergeneral}, genuine nonlinearity is equivalent to a strictly monotone shock speed along each shock curve\footnote{For more details on the Lax E-condition, Liu entropy condition, and other calculations involving the shock curves of the isentropic Euler system, many references are available. See for example \cite[p.~293]{Leger2011} and references therein.}.

Thus, the non-perverse Hugoniot locus condition is verified. We note that this  argument is similar to the proof of \cite[Theorem 5.4]{K-K}.

Remark also that the in the canonical variables $(\rho,m)$, the entropy  \eqref{entropyisentropicgeneral} is strictly convex.

We now check that when we convert the system \eqref{isentropicEulergeneral} to Langrangian coordinates, the resulting system meets the Hypotheses $(\mathcal{H}1)$.

From \Cref{LEcoordinates}, we get the corresponding Langrangian system is
\begin{align}\label{isentropicEulerconvert}
     \begin{cases}
    \partial_t v_1 -\partial_y v_2 =0,\\
    \partial_t v_2 +\partial_y P(v_1^{-1})=0.
    \end{cases}
\end{align}
Thus, we receive the $p$-system (see \eqref{psystem}).

We have also the corresponding entropy, entropy-flux
\begin{align}
    \hat\eta(V)&=\frac{v_2^2}{2}+S(v_1^{-1})v_1,\\
    \hat q (V)&=v_2 \big(\frac{S'(v_1^{-1})}{v_1}-S(v_1^{-1})\big).
\end{align}

It is immediate to check that when the system \eqref{isentropicEulergeneral} is strictly hyperbolic, i.e. $P'(\rho)>0$, then \eqref{isentropicEulerconvert} is also strictly hyperbolic. Likewise, when the system \eqref{isentropicEulergeneral} is genuinely nonlinear, i.e. $[\rho P(\rho]''\neq 0$, then \eqref{isentropicEulerconvert} is also genuinely nonlinear.

The compatibility condition $\nabla \hat{q}=\nabla\hat\eta D(-v_2,P(v_1^{-1}))$ implies that $\hat q$ can be written in the form $\hat{q} = \hat\eta_{v_1}\hat\eta_{v_2}$. However, note that we do not necessarily have that $P(v_1^{-1})=\eta_{v_1}$.

Thus, Hypotheses $(\mathcal{H}1)$ \eqref{item51} follows from (the proof) of \Cref{generalpmeetsconds}.

The result follows from \Cref{changeofCoordsnoT4}.

\end{proof}

\subsection{Two copies of Burgers}

To conclude this section, we consider two copies of Burgers, coupled only at the level of the entropy and entropy-flux:
\begin{align}\label{twoBurgers}
    \begin{cases}
  \partial_t u_1 + \partial_x f_1(u_1)=0,\\
  \partial_t u_2 + \partial_x f_2(u_2)=0,
    \end{cases}
\end{align}
for smooth functions $f_i\colon\mathbb{R}\to\mathbb{R}$. The system is strictly hyperbolic when the images of $f_1'$ and $f_2'$ are disjoint, and the system is genuinely nonlinear when $f_i''\neq 0$ for $i=1,2$. The system \eqref{twoBurgers} admits the following entropy, entropy-flux pair,
\begin{align}
    \eta(u_1,u_2)&=h(u_1)+g(u_2),\\
    q(u_1,u_2)&=\int\limits^{u_1} h'(s)f_1'(s)\,ds+\int\limits^{u_2} g'(s)f_2'(s)\,ds,
\end{align}
for any smooth and strictly convex $h$ and $g$.

When the system \eqref{twoBurgers} is strictly hyperbolic, this exhausts the set of all entropy, entropy-flux pairs for the system due to the compatibility condition $\nabla q=\nabla\eta D(f_1,f_2)$ and using that $\nabla q$ must be curl-free. 
 
 Remark that \eqref{twoBurgers} does not verify the Smoller-Johnson condition \eqref{SJcond}; its rarefaction curves are in fact straight lines parallel to the coordinate axes. Thus it does not satisfy Hypotheses $(\mathcal{H}1)$. Furthermore, we place no restrictions on the values of $f_1'$ and $f_2'$ -- they might not verify one nonpositive, one nonnegative, thus failing Hypotheses $(\mathcal{H}1)$ \eqref{item61}.

However, the straight rarefaction curves can actually help us:
\begin{lemma}\label{twocopiesH2}
Two copies of Burgers \eqref{twoBurgers}, when strictly hyperbolic and genuinely nonlinear, verifies Hypotheses $(\mathcal{H}2)$.
\end{lemma}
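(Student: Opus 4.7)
The plan is to verify the five items of Hypotheses $(\mathcal{H}2)$ in turn, exploiting the decoupled structure of \eqref{twoBurgers} to pin down the shock geometry rigidly. First I would examine the Rankine--Hugoniot condition
\[
\sigma(u_{i,L} - u_{i,R}) = f_i(u_{i,L}) - f_i(u_{i,R}), \qquad i=1,2.
\]
If both coordinates of $U_L$ and $U_R$ differ, the mean value theorem places $\sigma$ simultaneously in the images of $f_1'$ and $f_2'$, contradicting strict hyperbolicity. Hence $H(U_0)$ is exactly the horizontal and vertical line through $U_0$, which I would parameterize as $S^1_{U_0}(s) = (u_{1,0}+s, u_{2,0})$ and $S^2_{U_0}(s) = (u_{1,0}, u_{2,0}+s)$ with $\sigma^k_{U_0}(s) = [f_k(u_{k,0}+s) - f_k(u_{k,0})]/s$. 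This gives \eqref{item2}, and $\frac{d}{ds}\sigma^k_{U_0}(s) \neq 0$ for $s\neq 0$ follows from $f_k''\neq 0$, yielding \eqref{item1}. Strict convexity of $\eta = h+g$ is immediate from strict convexity of $h,g$, giving \eqref{item4}.

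For the non-perverse condition \eqref{item5}, suppose $U_1,\ldots,U_4$ are collinear and pairwise shock-connected. The previous paragraph forces each pair to share the first or second coordinate. A brief case check shows these sharings cannot mix: if $U_1,U_2$ share $u_1$ and $U_1,U_3$ share $u_2$, then $U_2,U_3$ would be forced to coincide with $U_1$. Hence all four share the same coordinate, the line is horizontal or vertical, and there is a uniform $k \in \{1,2\}$ with $U_j \in S^k_{U_i}$ for all $i\neq j$. Monotonicity of $\sigma^k$ along the shock curve has just been established; the Lax $E$-condition follows from $\sigma^k_{U_i}(s) = f_k'(\xi)$ (mean value) together with monotonicity of $f_k'$; and one coordinate is trivially monotone along $S^k$ by construction.

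The heart of the argument is \eqref{item3}. The compatibility relation and a direct calculation give
\[
\nabla\tilde q = \bigl(f_1'(u_1)\,\partial_{u_1}\tilde\eta,\; f_2'(u_2)\,\partial_{u_2}\tilde\eta\bigr),
\]
so Lagrange multipliers for $\tilde q$ restricted to $\{\tilde\eta = C\}$ require $(\partial_{u_i}\tilde\eta)(f_i'(u_i) - \lambda) = 0$ for $i=1,2$. Disjointness of the images of $f_1',f_2'$ prevents $\lambda = f_1'(u_1) = f_2'(u_2)$, so every critical point satisfies $h'(u_1) + c_1 = 0$ \emph{or} $g'(u_2) + c_2 = 0$; geometrically, it is a point at which the strictly convex level curve has vertical or horizontal tangent. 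Strict convexity of the real-valued maps $u\mapsto h(u)+c_1 u$ and $u \mapsto g(u)+c_2 u$ makes each of the lines $u_1 = u_1^*$ and $u_2 = u_2^*$ meet the level curve in at most two points, producing at most four critical points in total, and strictly fewer when unboundedness pushes at least one tangent point to infinity.

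The step I expect to demand the most care is verifying that every such critical point is a genuine local extremum rather than a degenerate inflection point. Near a horizontal-tangent critical point I would parameterize the level curve as $u_2 = u_2(u_1)$ with $u_2'(u_1^*) = 0$, use implicit differentiation of $\tilde\eta = C$ to obtain $u_2''(u_1^*) = -h''(u_1^*)/(g'(u_2(u_1^*)) + c_2)$, and substitute into the second derivative of $\tilde q(u_1, u_2(u_1))$. After using $h'(u_1^*) + c_1 = 0$, the cross terms collapse and the expression reduces to
\[
h''(u_1^*)\bigl(f_1'(u_1^*) - f_2'(u_2(u_1^*))\bigr),
\]
which is nonzero by $h'' > 0$ and by disjointness of the images of $f_1',f_2'$. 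The analogous computation at vertical-tangent points then closes \eqref{item3} and finishes the proof.
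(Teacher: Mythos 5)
Your proof is correct and takes essentially the same route as the paper: axis-parallel shock curves with the Liu and Lax conditions following from $f_k''\neq 0$, and item \eqref{item3} handled by Lagrange multipliers together with the compatibility relation, reducing critical points of $\tilde q$ on $\{\tilde\eta=C\}$ to the (at most four) horizontal/vertical tangency points of a strictly convex level curve, with fewer in the unbounded case. The only differences are that you derive explicitly what the paper asserts or cites (the structure of the Hugoniot locus from Rankine--Hugoniot plus disjointness of the images of $f_1',f_2'$, and the case check for the non-perverse condition), and you add a second-derivative verification that the critical points are genuine local extrema, a point the paper's counting argument does not address.
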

\begin{proof}
Remark that the rarefaction curves (and shock curves) are straight lines parallel to the coordinate axes.

For a reference showing that Burgers satisfies the Lax E-condition, see \cite[p.~275]{dafermos_big_book}. For the Liu entropy condition, see \cite[p.~279]{dafermos_big_book}.

Thus Hypotheses $(\mathcal{H}2)$ \eqref{item4}, \eqref{item2}, \eqref{item1} and \eqref{item5} all immediately follow. 

We now show Hypotheses $(\mathcal{H}2)$ \eqref{item3}.

 We want to find critical points of $\tilde q$ restricted to a level set of the function $\tilde \eta$. This is a constrained optimization problem, and as we do elsewhere in this paper, we use Lagrange multipliers. Thus, $U\in\mathbb{R}^{2}$ is a critical point if and only if
\begin{align}
    \nabla \tilde q(U) =\lambda \nabla\tilde\eta (U).
\end{align}
for some $\lambda\in\mathbb{R}$. Due to the compatibility condition $\nabla\tilde q= \nabla\tilde \eta D(f_1,f_2)$ between $\tilde \eta$ and $\tilde q$, we have $\nabla\tilde \eta(U) D(f_1,f_2)(U)=\lambda \nabla\tilde\eta (U)$. Thus, $\nabla\tilde\eta(U)$ is a left eigenvector of $D(f_1,f_2)$.

Remark that we have the following relation between right eigenvectors $r_1,r_2$ and left eigenvectors $l_1,l_2$,
\begin{align}\label{norm_vectBurgers}
l_i r_j = \begin{cases}
0 &\text{if } i \neq j,   \\
1 &\text{if } i=j.   \end{cases}
\end{align}

We now count the maximum number of times that the vector fields $r_i$ of right eigenvectors are parallel with a given level set of $\tilde\eta$. By using \eqref{norm_vectBurgers}, this will allow us to count critical points.

Then, due to the strict convexity of sublevel sets $\{\tilde\eta<C\}$ and the fact that the rarefaction curves are parallel to the coordinate axes, we have at most two critical points corresponding to each characteristic family. Thus, we have at most four critical points in total. Remark that if the level set of $\tilde\eta$ is unbounded, then there will be less than four critical points. This shows Hypotheses $(\mathcal{H}2)$ \eqref{item3}.
\end{proof}

\section{Proof that Hypotheses Imply Nonexistence of $T_4$ in Eulerian or Lagrangian Coordinates}\label{sec:proofshyp}

\subsection{Preliminaries}

We state the following simple Lemma. It plays an important role in our proofs in \Cref{sec:proofmain_theorem1} and \Cref{sec:proofmain_theorem}.

\begin{lemma}[The shock curve perturbation Lemma]\label{curve_perturb}
Let $\alpha\colon\mathbb{R}\to\mathcal{V}$ be a continuous curve parameterized by $t\in\mathbb{R}$.

Consider the system \eqref{system} with Hypotheses $(\mathcal{H}1)$ or Hypotheses $(\mathcal{H}2)$. Consider the four connected components of $U$-space which are determined by the Hugoniot locus at the point $\alpha(0)$. If a point $U\in\mathcal{V}$ is in one of these four connected components, then it will be in the same connected component for all $t$ unless there exists $t_0$ such that the Hugoniot locus at $\alpha(t_0)$ intersects $U$.
\end{lemma}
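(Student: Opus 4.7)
The plan is to show that on the open set $T := \{t\in\mathbb{R} : U\notin H(\alpha(t))\}$, the connected component of $\mathcal{V}\setminus H(\alpha(t))$ containing $U$ is determined by the connected component of $T$ to which $t$ belongs. Since the complement $\mathbb{R}\setminus T$ is precisely the set of times at which $H(\alpha(t))$ sweeps across $U$, this is exactly the content of the lemma. I would encode the ``label'' of the component containing $U$ by recording which side of each of the two shock curves $S^1_{\alpha(t)}$ and $S^2_{\alpha(t)}$ it lies on, and then show each of these side-labels is locally constant in $t$.

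The key analytic input I would establish first is the following uniform-avoidance statement: for $k\in\{1,2\}$ and $V_0\notin S^k_{\alpha(t_0)}$, there exist $\delta,\epsilon>0$ such that $B(V_0,\delta)\cap S^k_{\alpha(t)}=\emptyset$ for every $|t-t_0|<\epsilon$. Suppose otherwise; then there exist sequences $t_n\to t_0$ and $s_n\in\mathbb{R}$ with $S^k_{\alpha(t_n)}(s_n)\to V_0$. Under Hypotheses $(\mathcal{H}1)$, \Cref{geo_facts}\eqref{prop0} guarantees that each shock curve is a simple proper arc extending to infinity, which forces $\{s_n\}$ to be bounded. Under Hypotheses $(\mathcal{H}2)$, the same conclusion follows from the non-self-intersection guaranteed by the Liu entropy condition \eqref{item1}, combined with the monotone-coordinate clause of \Cref{nonperverse}: a smooth injective curve defined on all of $\mathbb{R}$ with a strictly monotonic coordinate cannot be confined to any bounded set. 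Extracting a subsequence $s_n\to s_*$ and passing to the limit using the joint continuity of $(U_0,s)\mapsto S^k_{U_0}(s)$ (Hypotheses $(\mathcal{H}2)$ \eqref{item2}, or the Implicit Function Theorem argument already used in the proof of \Cref{h1h2equiv} under $(\mathcal{H}1)$) yields $S^k_{\alpha(t_0)}(s_*)=V_0$, contradicting the choice of $V_0$.

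Given this uniform-avoidance statement, each simple proper arc $S^k_{\alpha(t)}$ separates $\mathcal{V}$ into two open components, which I label by $\theta_k(t,V)\in\{+,-\}$ using a convention that depends continuously on $t$ (for instance, labeling by the signed side with respect to the tangent direction of $S^k_{\alpha(t)}$ at $\alpha(t)$ given by the parametrization). The four components of $\mathcal{V}\setminus H(\alpha(t))$ are then parametrized by $(\theta_1,\theta_2)\in\{+,-\}^2$. Applied with $V_0=U$, the uniform-avoidance statement produces a product neighborhood $(t_0-\epsilon,t_0+\epsilon)\times B(U,\delta)$ which meets neither $S^1_{\alpha(t)}$ nor $S^2_{\alpha(t)}$; by connectedness of this neighborhood both $\theta_1(\cdot,U)$ and $\theta_2(\cdot,U)$ are constant on $(t_0-\epsilon,t_0+\epsilon)$. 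Hence the discrete-valued map $t\mapsto(\theta_1(t,U),\theta_2(t,U))$ is locally constant on $T$, and therefore constant on each connected component of $T$, which is the lemma. The only nontrivial ingredient is the properness used in the uniform-avoidance step; once that is in hand, the remainder is the standard topological observation that an oriented cross in the plane has four well-defined quadrants whose labels vary continuously under small perturbations of the cross.
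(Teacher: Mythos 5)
Your proposal is correct and follows essentially the same route as the paper, whose entire proof of this lemma is the observation that the claim follows from the joint continuity of $(U_0,s)\mapsto S^k_{U_0}(s)$ in Hypotheses $(\mathcal{H}2)$ \eqref{item2}, with the $(\mathcal{H}1)$ case reduced to this via \Cref{h1h2equiv}; your side-label bookkeeping and uniform-avoidance step merely make explicit what the paper leaves implicit. One caution: the monotone-coordinate clause of \Cref{nonperverse} is stated only for shock curves through four collinear, mutually shock-connected points, so invoking it for the arbitrary curves $S^k_{\alpha(t_n)}$ is not literally licensed by the hypotheses -- under $(\mathcal{H}1)$ the needed properness comes from \Cref{geo_facts} \eqref{prop0}, while under $(\mathcal{H}2)$ it is, in effect, part of what the paper implicitly assumes about global shock curves when it asserts the lemma follows from continuity alone.
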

\begin{proof}
Under Hypotheses $(\mathcal{H}2)$, this follows from the map $(U_L,s)\mapsto S^k_{u_L}(s)$ being continuous (for $k=1,2$) (see Hypotheses $(\mathcal{H}2)$ \eqref{item2}). For Hypotheses $(\mathcal{H}1)$, see \Cref{h1h2equiv}.  %is this stronger than we need actually???
\end{proof}

We now introduce the following  geometric-linear algebra Lemma.

\begin{lemma}[Geometric-linear algebra Lemma]\label{lin_combo}

Consider a system \eqref{system} endowed with a strictly convex entropy $\eta$ and associated entropy-flux $q$ and verifying the non-perverse Hugoniot locus condition (\Cref{nonperverse}).

Assume there are four points $\{X_1,\ldots,X_4\}\subset \mathcal{K}_{f,\eta,q} \subset \mathbb{R}^{3\times2}$ in $T_4$ configuration (which in particular implies no rank-one connections pairwise).

Suppose $(P,C_i,\kappa_i)$ is the parameterization of $\{X_i\}$ corresponding to \eqref{T_N_param}, in other words $(P,C_i,\kappa_i)$ is a solution to the equations \eqref{T_N_param} with the left-hand side given by $\{X_i\}$.  

Write $C_i=a_i\otimes n_i$ for $a_i\in\mathbb{R}^{3\times1},n_i\in\mathbb{R}^{1\times2}$, for $i=1,\ldots,4$.

Define the matrix $B\in\mathbb{R}^{3\times8}$  by concatenating the four elements of the $T_4$. More precisely,
\begin{align}
    B\coloneqq
     \begin{bmatrix} 
    \uparrow & \uparrow &  \uparrow &\uparrow \\
    X_1 & X_2  & X_3 & X_4 \\
    \downarrow & \downarrow  & \downarrow & \downarrow
    \end{bmatrix}.
\end{align}

Similarly, define the concatenation of four copies of $P$,
\begin{align}
    \Pi\coloneqq
     \begin{bmatrix} 
    \uparrow & \uparrow &  \uparrow &\uparrow \\
    P & P  & P & P \\
    \downarrow & \downarrow  & \downarrow & \downarrow
    \end{bmatrix} \in\mathbb{R}^{3\times8}.
\end{align}

Then, we conclude the third row of $B-\Pi$ is a linear combination of the first two rows.
\end{lemma}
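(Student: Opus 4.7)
The plan is to exploit the rank-one structure of the $C_i$ together with the constraint $\sum_i C_i=0$ and the absence of rank-one connections between the $X_i$.

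Write $C_i=a_i n_i^{\top}$ with $a_i\in\R^3$ and $n_i\in\R^2$, and assemble these into $A=[a_1\mid a_2\mid a_3\mid a_4]\in\R^{3\times 4}$ and $N=[n_1\mid n_2\mid n_3\mid n_4]\in\R^{2\times 4}$. The constraint $\sum_i C_i=0$ then reads $AN^{\top}=0$. The first step is to show $\operatorname{rank}(N)=2$: if all $n_i$ were parallel to a common $n\in\R^2$, every $X_i-X_j$ would be a sum of matrices of the form $b\otimes n$ and hence itself rank-one, contradicting the absence of rank-one connections in a $T_4$. From $AN^{\top}=0$ with $\operatorname{rank}(N)=2$ we obtain $\operatorname{rank}(A)\leq 2$, and a symmetric argument (collinear $a_i$ force every $X_i-X_j$ to have the form $a\otimes m$, hence rank-one) yields $\operatorname{rank}(A)\geq 2$. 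Therefore $\operatorname{rank}(A)=2$.

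Next, every column of each $X_i-P$ is a linear combination of the $a_j$ (with coefficients built from the $\kappa_j$ and the entries of $n_j$), so the eight columns of $B-\Pi$ all lie in the two-dimensional plane $V:=\operatorname{col}(A)\subset\R^3$. Choose $\gamma=(\gamma_1,\gamma_2,\gamma_3)\in\R^3\setminus\{0\}$ orthogonal to $V$. In the non-degenerate case $\gamma_3\neq 0$, every column $v$ of $B-\Pi$ satisfies $v_3=-\tfrac{\gamma_1}{\gamma_3}v_1-\tfrac{\gamma_2}{\gamma_3}v_2$, which is precisely the statement that the third row of $B-\Pi$ equals $-\tfrac{\gamma_1}{\gamma_3}$ times the first row plus $-\tfrac{\gamma_2}{\gamma_3}$ times the second row.

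The main obstacle is to rule out the degenerate case $\gamma_3=0$. Here $\gamma_1(a_i)_1+\gamma_2(a_i)_2=0$ for every $i$, with $(\gamma_1,\gamma_2)\neq 0$. Because the first two rows of $C_i$ are $(a_i)_1 n_i^{\top}$ and $(a_i)_2 n_i^{\top}$, this forces the first two rows of each $C_i$ to be proportional as $1\times 2$ vectors, with a common proportionality constant independent of $i$; the same proportionality survives any linear combination, so the first two rows of every $X_i-P$ and hence of every $X_i-X_j$ are linearly dependent. Consequently $\det_{12}(X_i-X_j)=0$ for all $i\neq j$. This directly contradicts \Cref{TN_change_sign}: with $M=\det_{12}$ and any fixed $i$, the set $\{M(X_i-X_j):j\neq i\}$ would consist entirely of zeros and could not change sign. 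Alternatively, $\det_{12}=0$ forces pairwise Rankine-Hugoniot relations and the four states to lie on a common line in $\mathcal{V}$; the non-perverse Hugoniot locus condition then places them on a single shock curve, and the strict entropy dissipation of \Cref{diss_lemma} produces a contradiction. Either way $\gamma_3\neq 0$ and the lemma is established.
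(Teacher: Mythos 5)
Your non-degenerate case and the rank bookkeeping are fine, and they are essentially the paper's Step~1 in different packaging: the paper shows the three rows of $B-\Pi$ lie in a common two-dimensional subspace of $\R^{1\times 8}$ by a rank--nullity argument, while you show the eight columns lie in $\mathrm{col}(A)\subset\R^3$ and take a normal vector $\gamma$; for $\gamma_3\neq 0$ the conclusions coincide, and your $\gamma_3=0$ case is exactly the paper's degenerate case ``rows $1$ and $2$ of $B-\Pi$ are linearly dependent.''

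The gap is in how you close that degenerate case. Your primary argument reads \Cref{TN_change_sign} as excluding the possibility $\det_{12}(X_i-X_j)=0$ for all $j\neq i$. That strict reading is not a true property of $T_4$ configurations: take any $T_4$ $\{Y_i\}\subset\R^{2\times 2}$ without rank-one connections and define $X_i\in\R^{3\times 2}$ by inserting a zero second row between the two rows of $Y_i$. The $C_i$ stay rank one, $\sum_i C_i=0$, every difference $X_i-X_j$ has rank two (so there are still no rank-one connections), yet $\det_{12}(X_i-X_j)\equiv 0$. The correct, provable form of the proposition (via the laminate supported on the $X_j$ with barycenter $P_i$ and quasiaffinity of $2\times 2$ minors) only forbids the values $\det_{12}(X_i-X_j)$, $j\neq i$, from all having the same \emph{strict} sign; an all-zero family is allowed. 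So pure linear algebra cannot finish here --- which is precisely why the lemma's hypotheses include the non-perverse Hugoniot locus condition and strict convexity, and why the paper spends its Step~2 on this case; note such degenerate $X_i$ simply cannot sit inside $\mathcal{K}_{f,\eta,q}$ under those hypotheses.

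Your one-sentence ``alternatively'' is the right route (and the paper's), but as written it is an assertion rather than a proof. After observing that the four states are collinear and pairwise Rankine--Hugoniot connected, so that \Cref{nonperverse} puts them on a single $S^k$-family, one still has to: (i) choose the base point $U_I$ maximizing $\lambda_k(U_i)$, so that the Lax E-condition \eqref{laxeHyP} (together with $\sigma^k_{U_I}(0)=\lambda_k(U_I)$ and the Liu condition) forces all remaining states to lie at parameters $s_i$ of one common sign; (ii) use the assumed monotonicity of a coordinate of the shock curve to fix the sign of the factor $(X_I)_{1,1}-(X_i)_{1,1}$ (or switch to the other row if it degenerates); and (iii) evaluate $\det_{13}(X_I-X_i)$ using the Rankine--Hugoniot relation and \Cref{diss_lemma}, obtaining one strict sign for all $i\neq I$. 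Only then does \Cref{TN_change_sign}, applied to the rows-$(1,3)$ minor, yield the contradiction; ``strict entropy dissipation'' alone contradicts nothing, since its sign depends on which side of the shock curve each state lies. As it stands, the degenerate case of your proof is not established.
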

\begin{proof}
We have $\sum_{i} C_i=0$. We can rewrite this as
\begin{align}\label{rewrite_sum}
\sum_i (a_i)_{j,1}(n_i)_{1,k}=0
\end{align}
for all $j=1,2,3$ and $k=1,2$.

\uline{Step 1}

We first show that $\text{rank} (B-\Pi)\leq 2$.

Define the matrix
\begin{align}
A\coloneqq \begin{bmatrix} 
    (n_1)_{1,1} & (n_2)_{1,1} &  (n_3)_{1,1} &(n_4)_{1,1} \\
    (n_1)_{1,2} & (n_2)_{1,2} &  (n_3)_{1,2} &(n_4)_{1,2}
    \end{bmatrix}.
\end{align}

Note that if $\text{rank} A=1$, then there exists $\bar{\sigma}\in\mathbb{R}$ such that every element of the $T_4$ has a first column which is a $\bar{\sigma}$ multiple of the second column. This implies there are rank-one connections in the $T_4$, which is in contradiction with the assumption of non-degeneracy of the $T_4$.

Thus $A$ has full row rank and $\text{rank} A=2$. This implies that the dimension of the null space of $A$ is 2.

Fix $j\in\{1,2,3\}$ and consider the map $\Gamma_j$ from the null space of $A$ to a subset of $\mathbb{R}^{1\times8}$, defined by
\begin{align}
    [(a_1)_{j,1} \hspace{.2in} (a_2)_{j,1} \hspace{.2in} (a_3)_{j,1} \hspace{.2in} (a_4)_{j,1}] \mapsto \mbox{the $j^{\text{th}}$ row of $B-\Pi$}.
\end{align}

Because the domain of $\Gamma_j$ is the null space of $A$, \eqref{rewrite_sum} holds for $k=1,2$. 

Remark that by the definition of the $X_i$, the $\Gamma_j$ map is linear. Further, the null space of $A$ has dimension 2. Thus, by the rank-nullity theorem we can conclude that the image of $\Gamma_j$ is a two dimensional subset of $\mathbb{R}^{1\times8}$. Furthermore, the image of $\Gamma_j$ is the same for all $j$.  

We can conclude that $\text{rank} (B-\Pi)\leq 2$.

\uline{Step 2}

We now show by contradiction that the last row of $(B-\Pi)$ is a linear combination of the first two rows.

Let us write 
\begin{align}
    U_i\coloneqq ((X_i)_{1,1},(X_i)_{2,1}).
\end{align}

Assume that the first two rows of $B-\Pi$ are linear multiples of each other. This implies that for every $i\neq j$ the points $U_i$ and $U_j$  (for $i,j=1,\ldots,4$) in the state space are connected by a shock.

Furthermore, when the first two rows of $B-\Pi$ are linear multiples of each other, then the $U$-space points  $U_i$ (for $i=1,\ldots,4$) lie on a straight line. Thus, from the non-perverse Hugoniot locus condition (\Cref{nonperverse}), we can conclude that there is a fixed $k\in\{1,2\}$ such that the curve $S^k_{U_i}$ contains the points $U_j$ for all $i$ and $j$. 

Then, let $I$ be a value of $i\in\{1,2,3,4\}$ which maximizes the quantity
\begin{align}
    \lambda_k (U_i).
\end{align}

Consider then the curve $S^k_{U_I}$. By above, we can conclude that this curve contains the points $U_i$ for all $i$. Furthermore, from the non-perverse Hugoniot locus condition (\Cref{nonperverse}), we know the shock speed function $\sigma^k_{U_I}$ verifies the Lax E-condition \eqref{laxeHyP} and the Liu entropy condition. Note as well the fact that $\sigma^k_{U_I}(0)= \lambda_k (U_I)$ (which also follows from the Lax E-condition). Hence, we can conclude that $U_i=S^k_{U_I}(s_i)$ (for all $i\neq I$), for $s_i\in\mathbb{R}$ and where the $s_i$ \emph{all have the same sign.}

Then, we get a contradiction due to \Cref{TN_change_sign}. 

More precisely, consider the determinant of the $2\times2$ matrix $X_I-X_i$ (with the middle row deleted):

\begin{align}\label{lincalc}
&((X_I)_{1,1}-(X_i)_{1,1}) (q(U_I)-q(U_i))
-(\eta(U_I)-\eta(U_i))(f_1(U_I)-f_1(U_i))
\\
&\hspace{.7in}= ((X_I)_{1,1}-(X_i)_{1,1})\Big[(q(U_I)-q(U_i))
-\sigma^k(s_i)(\eta(U_I)-\eta(U_i))\Big],\label{samesignresult}
\end{align}
where the last line comes from the fact that the speed $\sigma^k(s_i)$ of the shock connecting $U_I$ and $U_i$ is given by
\begin{align}\label{whatifdenom}
\frac{(f_1(U_I)-f_1(U_i))}{((X_I)_{1,1}-(X_i)_{1,1})}.
\end{align}

Then, due to the strict convexity of $\eta$ (and thus the the non-negativity of the relative entropy), the Liu entropy condition, the fact that the $s_i$ all have the same sign, and the monotonicity of the coordinates of the shock curve $S^k_{U_I}$ (see the non-perverse Hugoniot locus condition (\Cref{nonperverse})), we can conclude that \eqref{samesignresult} has the same sign for all $i$, thus contradicting the existence of a $T_4$ due to \Cref{TN_change_sign}.

Lastly, remark that if the denominator in \eqref{whatifdenom} is zero, then the second row of $X_I-X_i$ is a zero-multiple of the first row, and the computation of the $2\times2$ determinant should be repeated with the first row deleted instead of the second row (the computation is nearly identical). Similarly, if the $((X_I)_{1,1}-(X_i)_{1,1})$ terms do not have the same sign for all $i$, the computation of the $2\times2$ determinant should be repeated with the first row deleted instead of the second row.

We conclude that the last row of $(B-\Pi)$ is a linear combination of the first two rows.
\end{proof}

\subsection{Proof of \Cref{main_theorem1}}\label{sec:proofmain_theorem1}

We now give the argument for the nonexistence of $T_4$  for a $2\times2$ system of conservation laws verifying the Hypotheses $(\mathcal{H}1)$.

We argue by contradiction.

Let the four points of a $T_4$ be $ Y_1,\ldots, Y_4\in \mathcal{K}_{f,\eta,q}$.

By \Cref{h1h2equiv}, we have a non-perverse Hugoniot locus. By \Cref{lin_combo}, we know that there exists constants $c_1,c_2\in\mathbb{R}$ such that the system \eqref{system}, with the entropy $\tilde \eta$ and entropy-flux $\tilde q$ (see \eqref{tdef}), admits a $T_4$ configuration $X_1,\ldots, X_4\in \mathcal{K}_{f,\tilde\eta,\tilde q}$ with the property that $(X_i)_{3,1}=(X_j)_{3,1}$ and $(X_i)_{3,2}=(X_j)_{3,2}$ for all $j,k$. Remark, the $T_4$ points have corresponding $U$-plane coordinates $((X_1)_{(1,1)},(X_1)_{(1,2)}),\ldots,((X_4)_{(1,1)},(X_4)_{(1,2)})$.

By above, we know the four points are on the same level sets of $\tilde\eta$ and $\tilde q$. In particular, they are on the level set $\{\tilde\eta=C\}$ for some $C\in\mathbb{R}$. At this point in the proof, we no longer use any properties of being a $T_4$ configuration, including the specific ordering of the elements $ Y_1,\ldots, Y_4$ in the definition of $T_4$ configuration (see \Cref{T_N_def}). 

We know that $C$ cannot a global minimum of $\tilde\eta$ because such a minimum would be unique due to the strict convexity of $\eta$ and thus the four $T_4$ points would not be distinct and this would contradict the definition of a $T_4$ configuration.

Thus, as in \eqref{decomp_K_def}, we can write $\{\tilde\eta=C\}=\Romanbar{I}\cup \Romanbar{II} \cup \Romanbar{III} \cup \Romanbar{IV}$.

Due to Hypotheses $(\mathcal{H}1)$ part \eqref{item51} and the fact that all $T_4$ points are on the same level set of $\tilde q$, there are two possible cases: all of the $T_4$ points (in $U$-space) are in $\Romanbar{II}\cup \Romanbar{IV}$ or all of the $T_4$ points are in $\Romanbar{I}\cup \Romanbar{III}$.

We only consider the case when all of the $T_4$ points (in $U$-space) are in $\Romanbar{II}\cup \Romanbar{IV}$. The  case when all of the $T_4$ points  are in $\Romanbar{I}\cup \Romanbar{III}$ is very similar.

We first present two Lemmas which will be used repeatedly.

\begin{lemma}[Growth of $\tilde q$ inside level sets of $\tilde\eta$]\label{ordering_q}
For $U_L\in \{\tilde\eta=C\}$ and $s<0$ such that $S^1_{U_L}(s)\in\{\tilde\eta<C\}$, we have $\tilde q(S^1_{U_L}(s))>\tilde q(U_L)$.

Similarly, for $U_L\in \{\tilde\eta=C\}$ and $s>0$ such that $S^2_{U_L}(s)\in\{\tilde\eta<C\}$, we have $\tilde q(S^2_{U_L}(s))<\tilde q(U_L)$.
\end{lemma}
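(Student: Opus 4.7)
The key tool is Lax's entropy dissipation formula (\Cref{diss_lemma}) applied to the modified pair $(\tilde\eta,\tilde q)$. Since $(\tilde\eta,\tilde q)$ is also an entropy/entropy-flux pair (the perturbation $c\cdot U,\,c\cdot f(U)$ is compatible), and since $\tilde\eta(a|b)=\eta(a|b)$ (the affine part drops out of the relative entropy), the formula rearranges to
\begin{align*}
\tilde q(S^k_{U_L}(s))-\tilde q(U_L) = \sigma^k_{U_L}(s)\bigl(\tilde\eta(S^k_{U_L}(s))-\tilde\eta(U_L)\bigr) + \int_0^s \tfrac{d}{d\tau}\sigma^k_{U_L}(\tau)\,\eta(U_L\,|\,S^k_{U_L}(\tau))\,d\tau.
\end{align*}
The plan is to show both terms on the right have the same sign, positive for $k=1,\,s<0$ and negative for $k=2,\,s>0$.

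For the first term, I need the sign of $\sigma^k_{U_L}(s)$. This is where Hypotheses $(\mathcal{H}1)$ \eqref{item61} ($\lambda_1\le 0\le\lambda_2$) combines with the Lax E-condition. For $k=1$ and $s<0$, \Cref{geo_facts} \eqref{prop7} gives $\sigma^1_{U_L}(s)<\lambda_1(S^1_{U_L}(s))\le 0$, so $\sigma^1_{U_L}(s)<0$; combined with the hypothesis $\tilde\eta(S^1_{U_L}(s))-\tilde\eta(U_L)<0$, the first term is strictly positive. Symmetrically, for $k=2$ and $s>0$, \Cref{geo_facts} \eqref{prop6} yields $\sigma^2_{U_L}(s)>\lambda_2(S^2_{U_L}(s))\ge 0$, making the first term strictly negative.

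For the integral term, I use the Liu entropy condition (\Cref{geo_facts} \eqref{prop4}): $\frac{d}{d\tau}\sigma^k_{U_L}(\tau)<0$ for $\tau\ne 0$. Strict convexity of $\eta$ (Hypotheses $(\mathcal{H}1)$ \eqref{item41}) gives $\eta(U_L\,|\,S^k_{U_L}(\tau))>0$ for $\tau\ne 0$. Hence the integrand is strictly negative a.e. For $k=1$, $s<0$, the integral $\int_0^s(\text{neg})\,d\tau=-\int_s^0(\text{neg})\,d\tau>0$; for $k=2$, $s>0$, the integral is strictly negative. Adding the two contributions in each case yields the desired strict inequality for $\tilde q$.

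The argument is essentially sign-chasing; there is no real obstacle once one has the dissipation formula and the sign condition on $\lambda_1,\lambda_2$. The one small point to keep honest is that the integrand is genuinely nonzero on a set of positive measure (which follows from strict monotonicity of $\sigma^k$ along the shock curve and strict positivity of the relative entropy off the diagonal), so the inequalities are strict and not just weak.
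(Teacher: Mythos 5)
Your proof is correct and is essentially the paper's own argument: the paper proves this lemma in one line by citing precisely the ingredients you use, namely Lax's dissipation formula applied to $(\tilde\eta,\tilde q)$, the sign condition $\lambda_1\le 0\le\lambda_2$ from Hypotheses $(\mathcal{H}1)$ combined with the Lax E-condition to fix the sign of $\sigma^k$, and the Liu condition plus strict convexity for the sign of the integral term.
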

The proof of \Cref{ordering_q} follows immediately from Hypotheses $(\mathcal{H}1)$ part \eqref{item61}, \Cref{diss_lemma}, as well as the information on the sign of $\sigma^k$ from \Cref{geo_facts} part \eqref{prop6} and part \eqref{prop7}.

\begin{lemma}[Intersection of $S^1$ curves in the level set of $\tilde\eta$]\label{intersection_lemma}
For $U_R, \hat{U}_R\in\{\tilde\eta=C\}$ with the property that $\tilde q (U_R)=\tilde q(\hat{U}_R)$, consider the curves $S^1_{U_R}$ (for $s<0$) and $S^1_{\hat{U}_R}$ (for $s>0$). Then the two curves \emph{cannot intersect} at a point $U_L$ which has the property that $U_L\in\{\tilde\eta<C\}$.
\end{lemma}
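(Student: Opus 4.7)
The plan is to argue by contradiction, assuming the two curves meet at some $U_L \in \{\tilde\eta < C\}$. By the symmetry property of shock curves (\Cref{geo_facts} \eqref{propsymmetry}), both $U_R$ and $\hat U_R$ must then lie on the \emph{same} $1$-shock curve emanating from $U_L$, but at parameter values of opposite signs: $U_R = S^1_{U_L}(s_1)$ with $s_1 > 0$ and $\hat U_R = S^1_{U_L}(s_2)$ with $s_2 < 0$.

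The main tool is Lax's entropy dissipation formula (\Cref{diss_lemma}), applied along $S^1_{U_L}$ with the shifted pair $(\tilde\eta,\tilde q)$ in place of $(\eta,q)$. This is legitimate because $(\tilde\eta,\tilde q)$ is itself an entropy/entropy-flux pair, $\tilde\eta$ is again strictly convex (adding a linear term leaves the Hessian unchanged), and the linear correction $c\cdot U$ cancels in the relative entropy, so $\tilde\eta(a\mid b)=\eta(a\mid b)\ge 0$. Writing $F(s) := \tilde q(S^1_{U_L}(s)) - \sigma^1_{U_L}(s)\,\tilde\eta(S^1_{U_L}(s))$, the dissipation identity \eqref{diss_form} reads
\begin{equation*}
F(s) = \tilde q(U_L) - \sigma^1_{U_L}(s)\,\tilde\eta(U_L) + \int_0^s \frac{d\sigma^1_{U_L}}{d\tau}(\tau)\,\tilde\eta\bigl(U_L\bigm|S^1_{U_L}(\tau)\bigr)\,d\tau.
\end{equation*}

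The key step is to evaluate $F(s_1)-F(s_2)$ in two ways. Using the hypotheses $\tilde\eta(U_R)=\tilde\eta(\hat U_R)=C$ and $\tilde q(U_R)=\tilde q(\hat U_R)$, direct evaluation gives $F(s_1)-F(s_2) = [\sigma^1_{U_L}(s_2)-\sigma^1_{U_L}(s_1)]\,C$. Subtracting the dissipation identity at the two parameter values expresses the same quantity as $[\sigma^1_{U_L}(s_2)-\sigma^1_{U_L}(s_1)]\,\tilde\eta(U_L)$ plus the integral of $\sigma'\,\tilde\eta(U_L\mid\cdot)$ over $[s_2,s_1]$. Equating and rearranging gives
\begin{equation*}
\bigl[\sigma^1_{U_L}(s_2)-\sigma^1_{U_L}(s_1)\bigr]\,\bigl[C-\tilde\eta(U_L)\bigr] = \int_{s_2}^{s_1} \frac{d\sigma^1_{U_L}}{d\tau}(\tau)\,\tilde\eta\bigl(U_L\bigm|S^1_{U_L}(\tau)\bigr)\,d\tau.
\end{equation*}

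The contradiction is then a short sign analysis. By the Liu/Lax conditions (\Cref{geo_facts} \eqref{prop4}, \eqref{prop6}, \eqref{prop7}), $\sigma^1_{U_L}$ is strictly decreasing across $\tau = 0$, so the prefactor $\sigma^1_{U_L}(s_2)-\sigma^1_{U_L}(s_1)$ is strictly positive (since $s_2<s_1$), and $C-\tilde\eta(U_L)>0$ by hypothesis; hence the left side is strictly positive. On the right, $d\sigma^1_{U_L}/d\tau<0$ for $\tau\ne0$ and $\tilde\eta(U_L\mid S^1_{U_L}(\tau))>0$ for $\tau\ne 0$ (strict convexity of $\tilde\eta$), so the integrand is strictly negative on $[s_2,s_1]\setminus\{0\}$, making the right side strictly negative. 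The only genuinely delicate point in the plan is the bookkeeping of the signs of $s_1$ and $s_2$ via \eqref{propsymmetry}, together with the fact that $\sigma^1_{U_L}$ is monotone across $\tau=0$; once these are in hand, both the identity and the sign comparison are essentially automatic.
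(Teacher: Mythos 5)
Your proposal is correct and follows essentially the same route as the paper: contradiction via the symmetry property of the shock curves (\Cref{geo_facts} \eqref{propsymmetry}), then Lax's dissipation formula (\Cref{diss_lemma}) applied at the two parameter values $s_1>0>s_2$, combined with the Liu monotonicity of $\sigma^1_{U_L}$, the strict positivity of the relative entropy, and $\tilde\eta(U_L)<C$. The only difference is cosmetic: you package the two evaluations of the dissipation formula into a single identity for $F(s_1)-F(s_2)$ and compare signs, whereas the paper records them as two strict inequalities and compares them directly.
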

\begin{proof}
We argue by contradiction. Assume such a $U_L$ exists. Then, by \Cref{geo_facts} \eqref{propsymmetry} there exists $s>0$ such that $U_R=S^1_{U_L}(s)$ and there exists $\hat{s}<0$ such that $\hat{U}_R=S^1_{U_L}(\hat{s})$.
From two applications of \Cref{diss_lemma}, we write
\begin{align}\label{contra1}
    \tilde q(U_R) -\tilde q(U_L) < \sigma^1_{U_L}(s)(\tilde\eta(U_R)-\tilde\eta(U_L))
\end{align}
for $s>0$ and
\begin{align}\label{contra2}
    \tilde q(\hat{U}_R) -\tilde q(U_L) > \sigma^1_{U_L}(\hat{s})(\tilde\eta(\hat{U}_R)-\tilde\eta(U_L))
\end{align}
for $\hat{s}<0$.

Then, recall that due to $U_L\in\{\tilde\eta<C\}$, $\tilde\eta(\hat{U}_R)-\tilde\eta(U_L)=\tilde\eta(U_R)-\tilde\eta(U_L)>0$. 

Moreover, from Hypotheses $(\mathcal{H}1)$ part \eqref{item61}, and \Cref{geo_facts} part \eqref{prop6} and part \eqref{prop7}, we know $\sigma^1_{U_L}(s)<\sigma^1_{U_L}(\hat{s})<0$. Thus \eqref{contra1} and \eqref{contra2} give a contradiction. This completes the proof.
\end{proof}

To determine how many of the $T_4$ points will lie in $\Romanbar{IV}$, we will calculate the total number of critical points of $\tilde q$ (with $\tilde q$ restricted to $\Romanbar{IV}$). Due to all $T_4$ points being on the same level set of $\tilde q$, this will determine the maximum number of the $T_4$ points that can lie in $\Romanbar{IV}$.

 Our aim is to find critical points of $\tilde q$ restricted to a level set of the function $\tilde \eta$. This is a constrained optimization problem, and we use Lagrange multipliers. Thus, $U\in\mathbb{R}^{2}$ is a critical point if and only if
\begin{align}
    \nabla \tilde q(U) =\lambda \nabla\tilde\eta (U).
\end{align}
for some $\lambda\in\mathbb{R}$. Due to the compatibility condition $\nabla\tilde q= \nabla\tilde \eta Df$ between $\tilde \eta$ and $\tilde q$, we have $\nabla\tilde \eta(U) Df(U)=\lambda \nabla\tilde\eta (U)$. Thus, $\nabla\tilde\eta(U)$ is a left eigenvector of $Df$.

Note then that due to $l_ir_j=0$ for $i\neq j$, the critical points of $\tilde q$ restricted to a level set of the function $\tilde \eta$ occur when one of the eigenvector fields $r_i$ is parallel with the level set of $\tilde\eta$.

Remark that 
\begin{align}
\nabla^2\tilde q = \nabla^2 \tilde\eta Df +\nabla\tilde\eta D^2f.
\end{align}

Thus,  
\begin{align}\label{consequence_SJ}
r_i^\top \nabla^2\tilde q r_i = \lambda_i r_i^\top\nabla^2 \tilde\eta r_i +\nabla\tilde\eta D^2 f(r_i ,r_i),
\end{align}
for $i=1,2$. 

Note that in $\Romanbar{IV}$, at a critical point of $\tilde q$, $\nabla\tilde\eta \parallel l_1$. In fact, $\nabla\tilde \eta \cdot l_1>0$. Thus, from the Smoller-Johnson condition \eqref{SJcond} we get that $\nabla\tilde\eta D^2 f(r_2 ,r_2)>0$. In particular, $r_2^\top \nabla^2\tilde q r_2>0$ due to $\lambda_2>0$ and the strict convexity of $\eta$ (which makes $\nabla^2 \tilde\eta$ positive-definite). Then, it is clear that $\tilde q$ will have at most one critical point in $\Romanbar{IV}$. This is because, by definition of the set $\Romanbar{IV}$, the $r_1$ vector will always be transverse to the boundary of the level set of $\tilde\eta$. Hence, critical points will occur only at points of $\Romanbar{IV}$ where the level set of $\tilde\eta$ is parallel to $r_2$. And we know that at one of these critical points, $\tilde q$ will be convex (when restricted to the level set of $\tilde\eta$).  Furthermore, we cannot have a function which is strictly convex at two adjacent critical points. Thus, we conclude there is at most one critical point in $\Romanbar{IV}$. This critical point, if it exists, will be a minimum.

And similarly, remark $r_1^\top \nabla^2\tilde q r_1<0$ at a possible critical point in $\Romanbar{I}$ and thus it is clear that $\tilde q$ will have at most one critical point in $\Romanbar{I}$. This critical point, if it exists, will be a maximum \footnote{This is useful to know in the case when at least one of the $T_4$ points  is in $\Romanbar{I}$, a case which, as mentioned above, we do not consider here.}.

Remark that if the Smoller-Johnson condition has its sign flipped, i.e. $l_j D^2 f(r_i ,r_i)<0$, we can still prove \Cref{main_theorem1}, but we consider the opposite side of the level set of $\tilde\eta$ (the sets $\Romanbar{II},\Romanbar{III}$).

Label the $T_4$ points $U_1,\ldots,U_4$ (they are marked by the $\times$ symbol in our figures, e.g. \Cref{case1}).

We then consider three cases:

\begin{itemize}
    \item Case 1: Four of the $T_4$ points are in $\Romanbar{II}$, and none are in $\Romanbar{IV}$.
    \item Case 2: Three of the $T_4$ points are in $\Romanbar{II}$, and one is in $\Romanbar{IV}$.
    \item Case 3: Two of the $T_4$ points are in $\Romanbar{II}$, and two are in $\Romanbar{IV}$.
\end{itemize}

Note that due to the existence of only one local extrema of $\tilde q$ in $\Romanbar{IV}$, at most two $T_4$ points can be in $\Romanbar{IV}$. Thus, these three cases are exhaustive.

We now begin the casework. 

\uline{Case 1}

\begin{figure}[tb]
 % \centering
      \includegraphics[width=.9\textwidth]{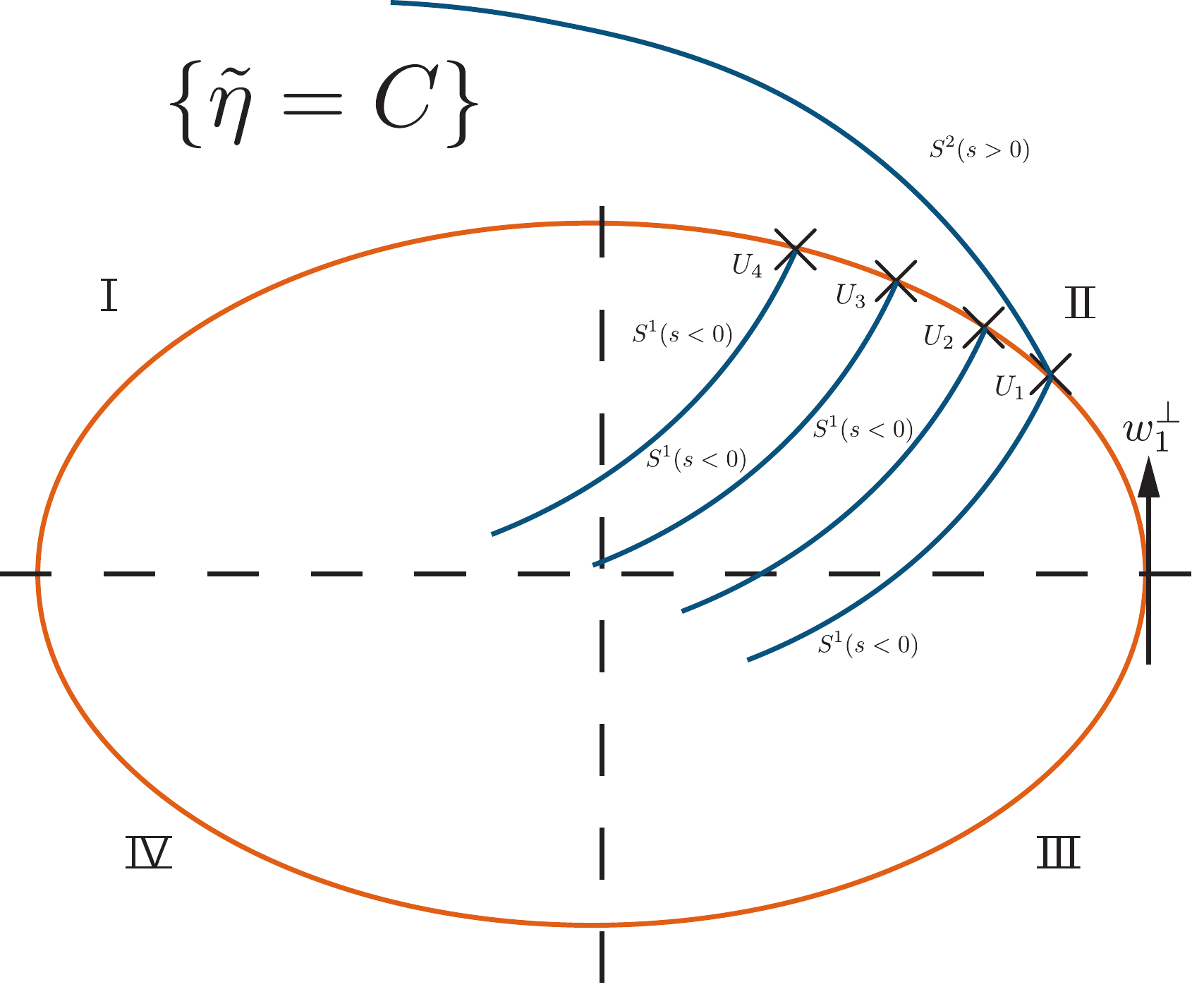}%\hspace{.8in}
  \caption{For Case 1: the point $U_1$ gives us a contradiction to the existence of a $T_4$.}\label{case1}
\end{figure}

Consider the case where all four points $U_1,\ldots,U_4$ of the $T_4$ (in $U$-space) are in the set $\Romanbar{II}$ (see \Cref{case1}). 

\begin{claim}
For $U_L\in \Romanbar{II}, S^1_{U_L}(s)$ does not intersect $\Romanbar{II}$ for $s<0$.
\end{claim}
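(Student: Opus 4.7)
The plan is to argue by contradiction: suppose there exists $\bar s<0$ with $U_R \coloneqq S^1_{U_L}(\bar s)\in\Romanbar{II}$. Since $\Romanbar{II}$ is a subset of the level set $\{\tilde\eta=C\}$, we have $\tilde\eta(U_R)=\tilde\eta(U_L)=C$. The key step is to apply Lax's entropy dissipation formula (\Cref{diss_lemma}) to the shifted entropy/entropy-flux pair $(\tilde\eta,\tilde q)$---which is itself a valid entropy pair for the system---along the shock curve from $U_L$ to $U_R$. Because $\tilde\eta(U_R)=\tilde\eta(U_L)$, the $\sigma\tilde\eta$ terms on both sides of the formula cancel, leaving
$$\tilde q(U_R)-\tilde q(U_L)=\int_0^{\bar s}\frac{d\sigma^1_{U_L}}{ds}(\tau)\,\tilde\eta(U_L\mid S^1_{U_L}(\tau))\,d\tau.$$

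Next, I will invoke two ingredients. First, the Liu entropy condition along $S^1_{U_L}$, which is available under Hypotheses $(\mathcal{H}1)$ via \Cref{h1h2equiv} and \Cref{geo_facts}\eqref{prop4}, and gives $d\sigma^1_{U_L}/ds<0$ away from $s=0$. Second, the strict convexity of $\eta$ (and therefore of $\tilde\eta$), which makes the relative entropy $\tilde\eta(U_L\mid S^1_{U_L}(\tau))$ nonnegative and strictly positive for $\tau\ne 0$. Thus the integrand is strictly negative on $(\bar s,0)$, and since $\bar s<0$ we have $\int_0^{\bar s}=-\int_{\bar s}^0$, so the right-hand side is strictly positive, giving $\tilde q(U_R)>\tilde q(U_L)$.

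This is the contradiction: in the Case 1 setting both $U_L$ and $U_R$ are among the four $T_4$ points, and the reduction performed before the case analysis (via \Cref{lin_combo}) forces all four $T_4$ points to share a common value of $\tilde q$. So the strict inequality $\tilde q(U_R)>\tilde q(U_L)$ is impossible, and no such $U_R$ can exist in $\Romanbar{II}$.

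The main obstacle is conceptual: one must confirm that the shifted pair $(\tilde\eta,\tilde q)$ inherits all the relevant structure---strict convexity of $\tilde\eta$, the entropy compatibility $\nabla\tilde q=\nabla\tilde\eta\,Df$, and the same shock curves $S^1_{U_L}$ and speeds $\sigma^1_{U_L}$ (which depend only on $f$, not on $\eta$ or $q$)---so that Lax's dissipation formula is applicable to $(\tilde\eta,\tilde q)$ in the same form as for $(\eta,q)$. Once this bookkeeping is done, the argument reduces to a single application of the dissipation formula combined with the Liu condition, which are both already established at this stage of the paper.
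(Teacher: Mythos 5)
Your computation via Lax's dissipation formula is correct as far as it goes (and the bookkeeping is fine: the affine shift leaves the relative entropy unchanged, $\tilde\eta(a|b)=\eta(a|b)$, and the shock curves and speeds depend only on $f$), but it proves a strictly weaker statement than the claim. What you show is: if $U_R=S^1_{U_L}(\bar s)\in\Romanbar{II}$ with $\bar s<0$, then $\tilde q(U_R)>\tilde q(U_L)$. That is only a contradiction if you additionally know $\tilde q(U_R)\le\tilde q(U_L)$, which you get by assuming $U_R$ is one of the four $T_4$ points. The claim, however, forbids the backward $1$-shock curve from meeting $\Romanbar{II}$ \emph{at any point whatsoever}, and this stronger statement is what Case 1 actually needs: the curve is used to show that the Hugoniot locus at $U_1$ does not \emph{separate} the remaining $T_4$ points, and a crossing of $\Romanbar{II}$ at a non-$T_4$ point lying between two of them (where $\tilde q$ is larger --- perfectly compatible with your inequality, since $\tilde q$ restricted to $\Romanbar{II}$ need not be monotone under Hypotheses $(\mathcal{H}1)$) would put those two points in different connected components and destroy the sign argument via \Cref{TN_change_sign}. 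Note also that the claim is stated for arbitrary $U_L\in\Romanbar{II}$, not only for $T_4$ points.

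Moreover, entropy dissipation alone cannot close this gap: applying \Cref{diss_lemma} from $U_R$ instead (using \Cref{geo_facts}\eqref{propsymmetry}, $U_L=S^1_{U_R}(s_L)$ with $s_L>0$) gives $\tilde q(U_L)<\tilde q(U_R)$, i.e.\ the same inequality, so a crossing with larger $\tilde q$ is consistent with both dissipation inequalities. The paper's proof is instead geometric: on $\Romanbar{II}$ the vector $r_1$ points into the sublevel set, so by the Mean Value Theorem and the sector condition the tangent line to $S^1_{U_R}$ at $U_R$ (which is parallel to $r_1(U_R)$ by \Cref{geo_facts}\eqref{prop1}) meets $\Romanbar{II}$ only at $U_R$; by \Cref{geo_facts}\eqref{prop1} and \eqref{prop5} the curve $S^1_{U_R}$ never crosses this tangent line, while the assumption $U_R=S^1_{U_L}(\bar s)$ together with $r_1$ pointing inward forces $U_L$ to lie on the opposite side of it --- a contradiction. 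These convexity/sector ingredients (Smoller--Johnson via the curvature of the rarefaction curves, star-shapedness of the shock curves) are essential here and are absent from your argument, so the proposal has a genuine gap.
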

\begin{claimproof}
We argue by contradiction. Assume there exists $s_R<0$ such that $U_R\coloneqq S^1_{U_L}(s_R)\in\Romanbar{II}$. Then we know that there exists $s_L>0$ such that $U_L=S^1_{U_R}(s_L)$. Consider the tangent to the curve $S^1_{U_R}$ at $s=0$. Notice that this tangent line cannot intersect $\Romanbar{II}$ more than once (at $U_R$, in fact), because if it did, then by the Mean Value Theorem and \Cref{geo_facts} part (\ref{prop1}), a point on $\Romanbar{II}$ would have a tangent vector parallel with $r_1(U_R)$, a contradiction to the definition of $\Romanbar{II}$. Recall the sector condition (\Cref{sector_def}).

Recall also that due to \Cref{geo_facts} part \eqref{prop1} and part \eqref{prop5}, the curve $S^1_{U_R}(s)$, for all $s$, does not cross its tangent line at $U_R$. Moreover, the point $U_L$ must be on the \emph{other side} of this tangent line (opposite from the curve $S^1_{U_R}$) because $U_R=S^1_{U_L}(s_R)$ and for all $U\in\Romanbar{II}$, $r_1(U)$ points into the level set of $\tilde\eta$. This gives a contradiction, and shows the claim.
\end{claimproof}

Consider now the $T_4$ point closest to where $\Romanbar{II}$ is tangent to $w^\perp_1$ (the point $U_1$ in \Cref{case1}). By the claim, we know that the $S^1$ curve (for $s<0$) emanating from this point does not cross between any of the other $T_4$ points. Furthermore, by the claim and \Cref{ordering_q}, we know the $S^2$ curve (for $s>0$) emanating from this point also does not cross between any of the other $T_4$ points. Lastly, by \Cref{geo_facts} (\ref{prop0}), we know that the $S^1$ curve (for $s>0$) and the $S^2$ curve (for $s<0$) cannot cross either of the $S^1$ curve (for $s<0$) or the $S^2$ curve (for $s>0$). Thus, we must have that the $U_1$ point gives us a contradiction to the existence of a $T_4$ by \Cref{TN_change_sign}.

\uline{Case 2}

We now consider the case when only three of the $T_4$ points are in $\Romanbar{II}$. The fourth point is in $\Romanbar{IV}$.

\uline{Subcase 2.1} 

Consider the $T_4$ point in $\Romanbar{II}$ closest to where $\Romanbar{II}$ is tangent to $w^\perp_1$ (the point $U_1$ in \Cref{case2}). Consider now the case where the $S^1$ curve (for $s<0$) emanating from  $U_1$ does not cross between any of the other $T_4$ points (see \Cref{case2}). In this case, similar to Case 1, we are done by again  considering the $U_1$ point in $\Romanbar{II}$  and  invoking \Cref{TN_change_sign}. 
\begin{figure}[tb]
 % \centering
      \includegraphics[width=.9\textwidth]{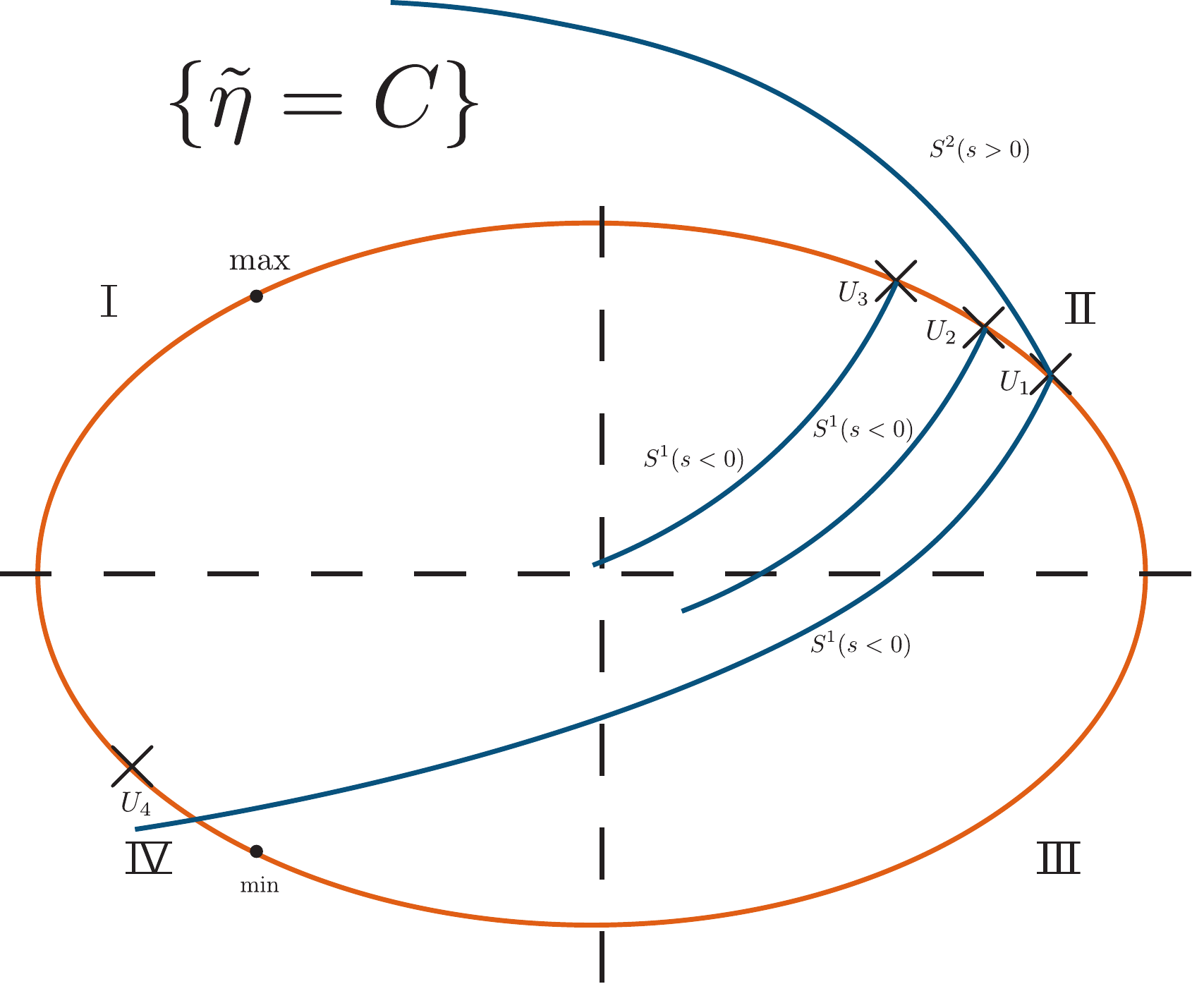}%\hspace{.8in}
  \caption{Subcase 2.1: in this case, the $U_1$ point in $\Romanbar{II}$ gives us a contradiction to the existence of a $T_4$.}\label{case2}
\end{figure}

\uline{Subcase 2.2}

\begin{figure}[tb]
 % \centering
      \includegraphics[width=.9\textwidth]{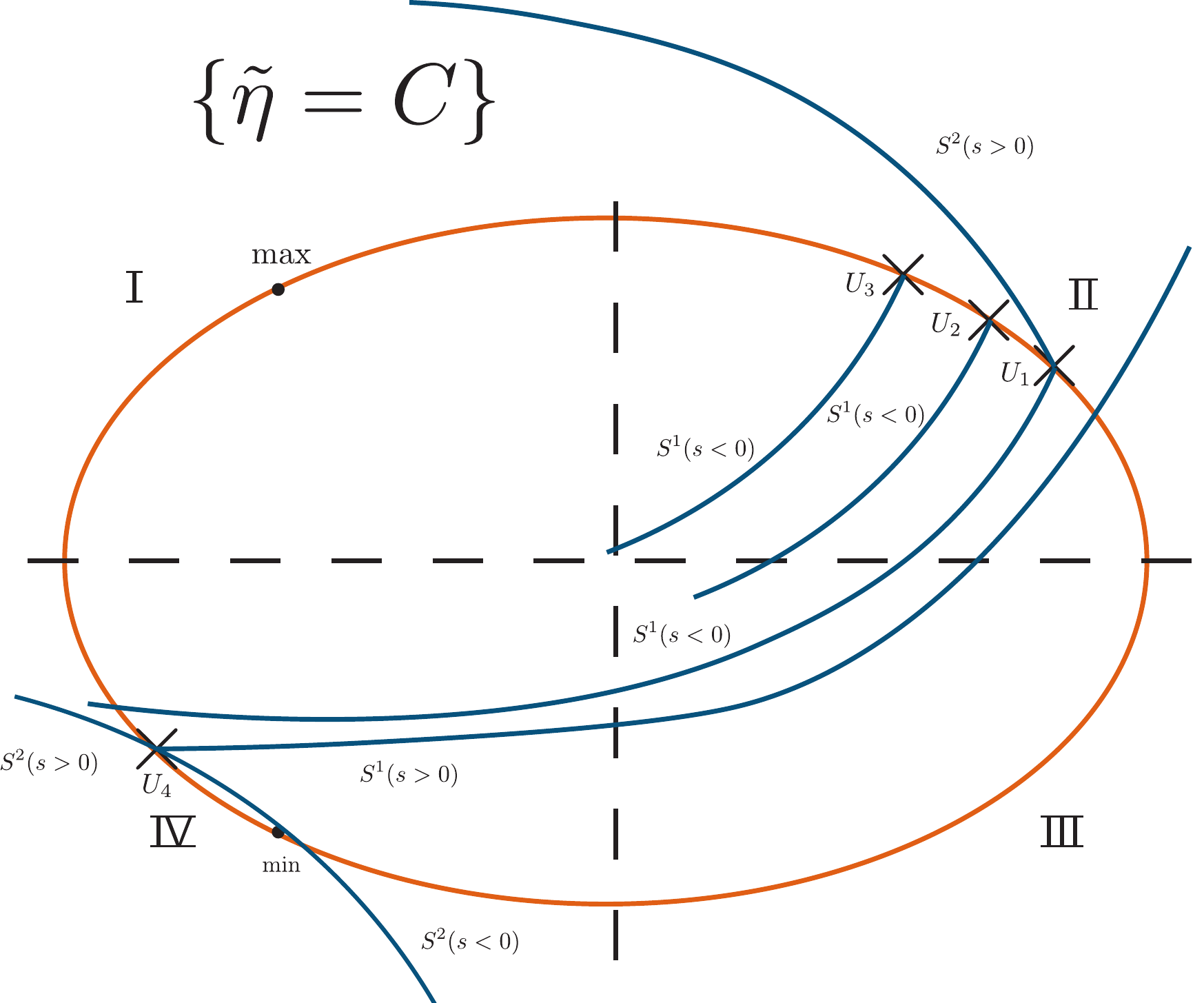}%\hspace{.8in}
  \caption{Subcase 2.2: in this case, we get a contradiction to the existence of a $T_4$ by considering the point $U_4$ in $\Romanbar{IV}$.}\label{case3}
\end{figure}

Otherwise, the $S^1$ curve emanating from the $U_1$ point in $\Romanbar{II}$ crosses between the $T_4$ point in $\Romanbar{IV}$ and the set $\Romanbar{II}$ (see \Cref{case3}). But then, in this case, due to \Cref{intersection_lemma}, the $S^1$ curve (for $s>0$) emanating from the $U_4$ point in $\Romanbar{IV}$ does not cross between any other $T_4$ points.

We argue by contradiction to show that the $S^2$ curve (for $s>0$) emanating from the $U_4$ point in $\Romanbar{IV}$ also does not cross between any other two $T_4$ points.

Assume that it does in fact cross between two of the $T_4$ points. We follow the same arguments that will 
be used in the proof of \Cref{main_theorem} (in particular, we use \Cref{curve_perturb}). 

Note that when this $S^2$ curve intersects the level set of $\tilde\eta$, by \Cref{diss_lemma} and \Cref{geo_facts} part \eqref{prop4}, $\tilde q$ at this point of intersection will be less than the value of $\tilde q$ at the $T_4$ points. Then, we can imagine dragging the starting point of this $S^2$ curve from the $T_4$ point  $U_4$ to the local minimum of $\tilde q$ (restricted to the level set of $\tilde\eta$) which is located in $\Romanbar{IV}$. By continuity, the point in the level set of $\tilde\eta$ where the $S^2$ curve intersects is always between two of the $T_4$ points in the level set of $\tilde\eta$ (these two $T_4$ points ``lock'' the shock curve between them). See also \Cref{curve_perturb}. This is a contradiction, because at the local minimum in $\Romanbar{IV}$, the $r_2$ eigenvector is parallel to the level set of $\tilde\eta$. In more detail: recall that the $S^2$ curves and $R^2$ curves have third-order contact (\Cref{geo_facts} part \eqref{prop1}). Furthermore, remark that due to \Cref{geo_facts} part \eqref{prop1} and part \eqref{prop5}, the curves $S^k_{U_0}$, for all $s$, do not cross their tangent line at $U_0$ (recall also that the rarefaction curves $R^k$ are convex and in particular the $R^2$ curves bend toward $l_1$ (and $r_1$)). Here, $U_0$ is in the context of \Cref{geo_facts}. Recall also that the level sets of $\tilde\eta$ are convex. Thus the $S^2$ curve at the local minimum cannot enter the level set of $\tilde\eta$ at all, and this gives the contradiction.

Thus, we can conclude that the three $T_4$ points $U_1,U_2,U_3$ in $\Romanbar{II}$ are between the $S^1$ curve (for $s>0$) and the $S^2$ curve (for $s>0$).

Lastly, as for Case 1, by  \Cref{geo_facts} (\ref{prop0}) we know that the $S^1$ curve (for $s<0$) and the $S^2$ curve (for $s<0$) cannot cross either of the $S^1$ curve (for $s>0$) or the $S^2$ curve (for $s>0$).

Then, by considering the $T_4$ point $U_4$ in $\Romanbar{IV}$, we are done due to \Cref{TN_change_sign}.

We remark that this idea of dragging the starting point of a shock curve and invoking \Cref{curve_perturb} will be used to great effect in the proof of \Cref{main_theorem} (see \Cref{sec:proofmain_theorem}).

\uline{Case 3} 

Let us now consider the case when there are two $T_4$ points in the set $\Romanbar{IV}$.

\uline{Subcase 3.1}

If two points are  in $\Romanbar{IV}$, let us consider the case where the $S^1$ curve (for $s<0$) emanating from the $U_1$ point in $\Romanbar{II}$ does not cross between any other $T_4$ points (see \Cref{case4a}). In this case, as in Case 1 and Subcase 2.1, we are done by considering the $U_1$ point in $\Romanbar{II}$ and invoking \Cref{TN_change_sign}.

\begin{figure}[tb]
 % \centering
      \includegraphics[width=.9\textwidth]{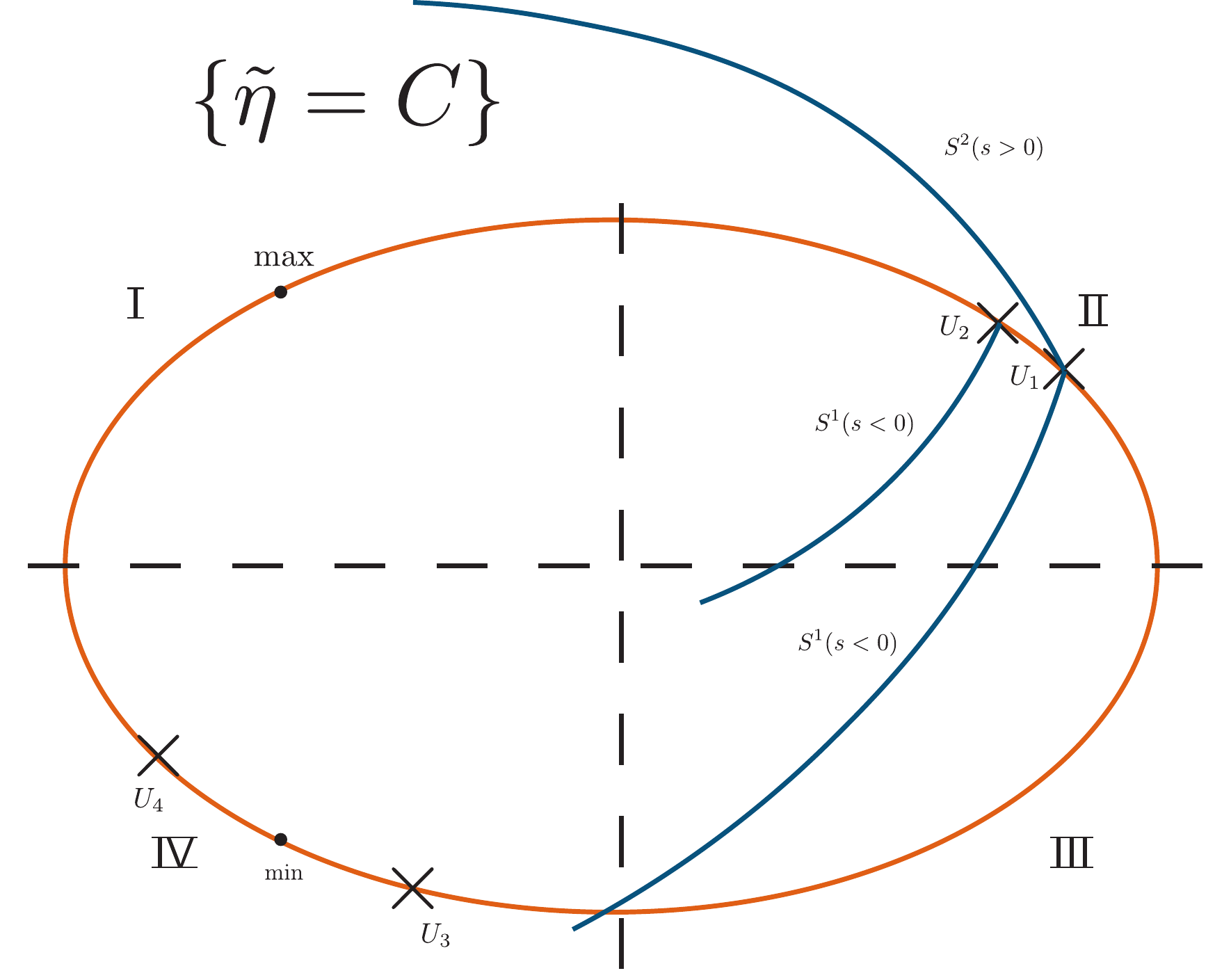}%\hspace{.8in}
  \caption{Subcase 3.1: in this case, as in Case 1 and Subcase 2.1, we are done by considering the $U_1$ point in $\Romanbar{II}$.}\label{case4a}
\end{figure}

\uline{Subcase 3.2}
\begin{figure}[tb]
 % \centering
      \includegraphics[width=.9\textwidth]{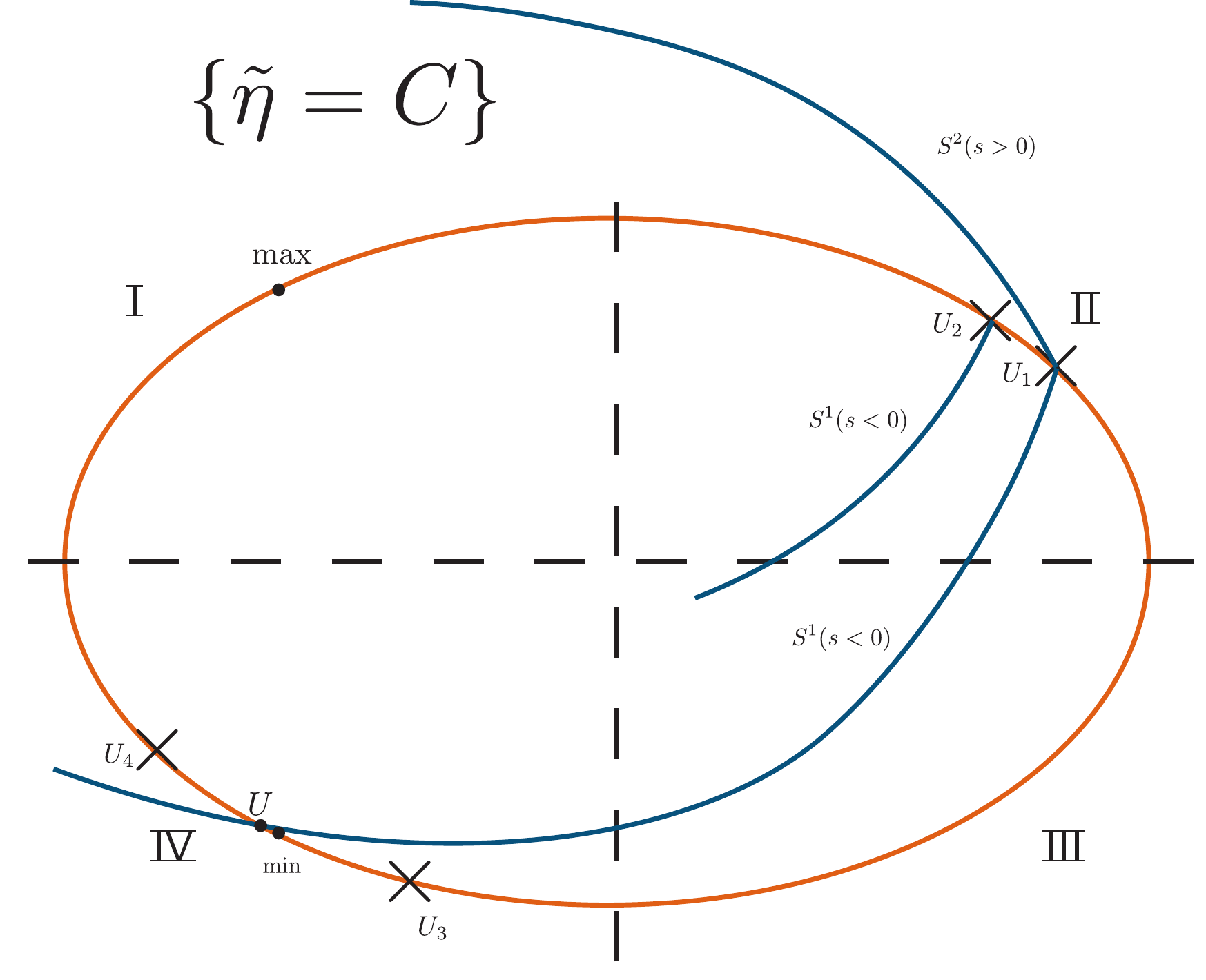}%\hspace{.8in}
  \caption{Subcase 3.2: we want to eliminate this possibility for the $S^1$ curve (for $s<0$) emanating from the $U_1$ point in $\Romanbar{II}$.}\label{case32}
\end{figure}

We consider the case when the $S^1$ curve emanating from the $U_1$ point in $\Romanbar{II}$ crosses between the two  $T_4$ points  in $\Romanbar{IV}$ (the points $U_3$ and $U_4$ - see  \Cref{case32}) and intersects the level set of $\tilde\eta$ at a point $U$. However, this cannot occur because at the point $U$, we must have $\tilde q(U)>\tilde q(U_0)$. This is a contradiction because the points $U_1,\ldots, U_4$ are all on the same level set of $\tilde q$ and in between the points  $U_3$ and $U_4$, the function $\tilde q$ (restricted to the level set of $\tilde\eta$) has a single critical point which is a minimum.

\uline{Subcase 3.3} 

\begin{figure}[tb]
 % \centering
      \includegraphics[width=.9\textwidth]{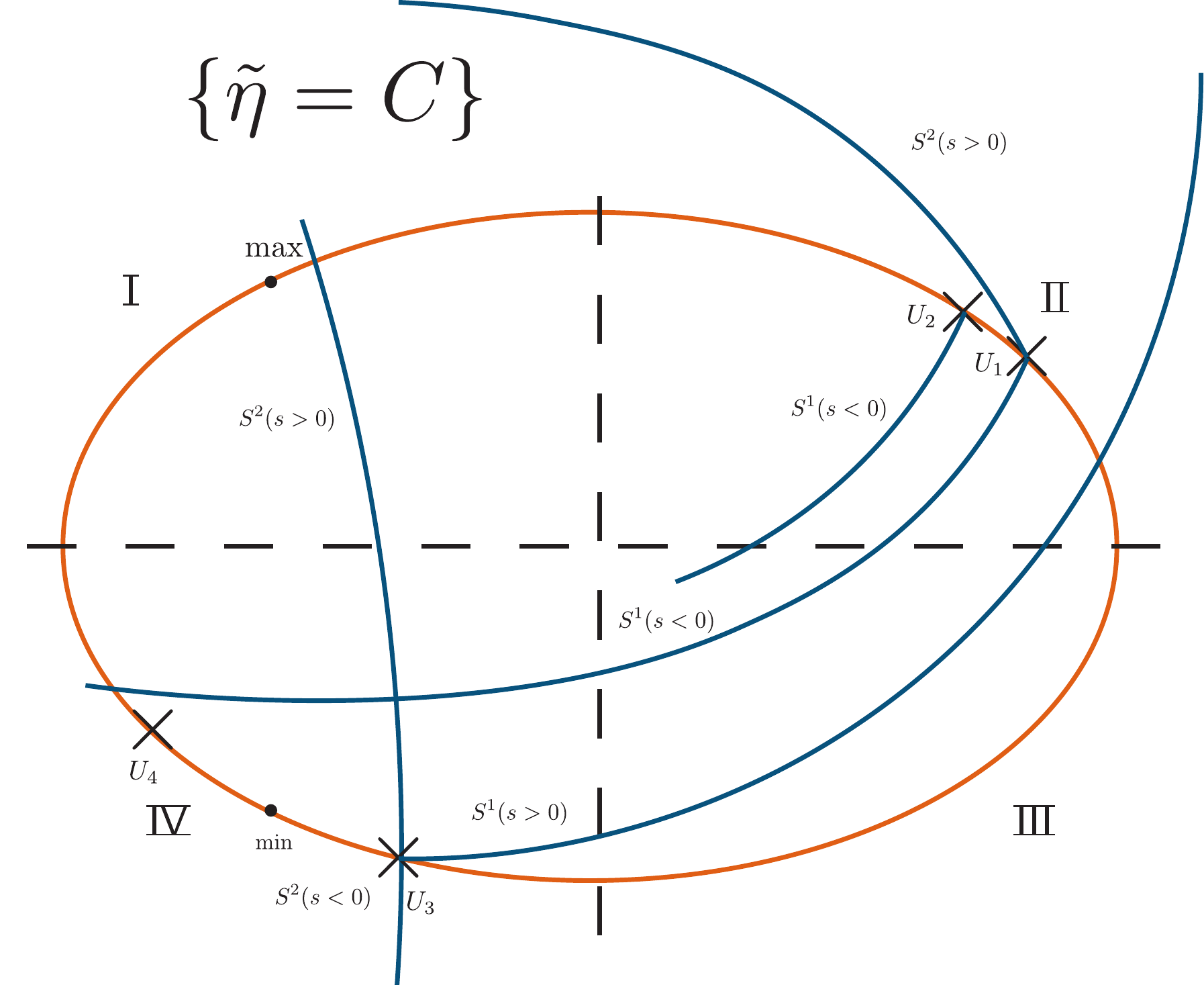}%\hspace{.8in}
  \caption{Subcase 3.3: we want to eliminate this possibility for the $S^2$ curve (for $s>0$) emanating from the $U_3$ point in $\Romanbar{IV}$.}\label{case4b}
\end{figure}

Otherwise, the $S^1$ curve emanating from the $U_1$ point in $\Romanbar{II}$ crosses between the  $U_4$ point (in $\Romanbar{IV}$) and the set $\Romanbar{I}$ (see \Cref{case4b}).  In this case, we consider the $T_4$ point in $\Romanbar{IV}$ furthest from the place where $w^\perp_1$ is tangent to $\Romanbar{IV}$ (the $U_3$ point in $\Romanbar{IV}$ -- see \Cref{case4b}). By \Cref{intersection_lemma}, this point must have a $S^1$ curve ($s>0$) which does not cross the $S^1$ curve ($s<0$) emanating from the $U_1$ point in $\Romanbar{II}$. However, as illustrated in \Cref{case4b}, the $S^2$ ($s>0$) curve emanating from the $U_3$ point in $\Romanbar{IV}$ may be problematic. But, as in Subcase 3.1, we can conclude that the other three $T_4$ points  are between the $S^1$ curve (for $s>0$) and $S^2$ curve (for $s>0$) emanating from the $U_3$ 
 point in $\Romanbar{IV}$ (see \Cref{case5}).

Thus, we must be in the case as shown in \Cref{case5}.

This completes the casework.

For an explicit example of the shock curves at each $T_4$ point, see \Cref{t4fig}. This gives an example from the $p$-system. 

\begin{figure}[tb]
 % \centering
      \includegraphics[width=.9\textwidth]{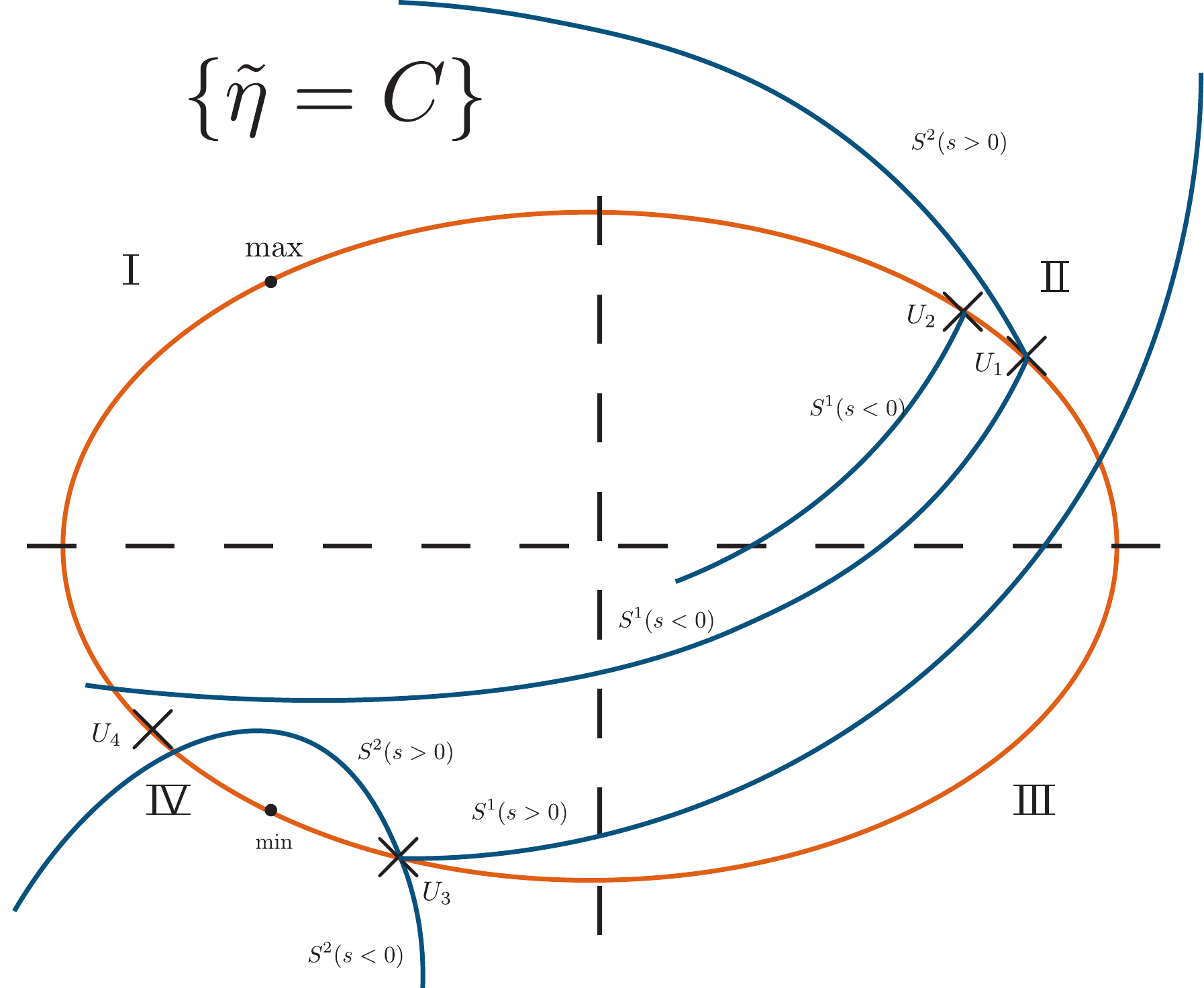}%\hspace{.8in}
  \caption{Conclusion of Subcase 3.3: as in Subcase 3.1, we can conclude that three of the $T_4$ points are between the $S^1$ curve (for $s>0$) and the $S^2$ curve (for $s>0$) emanating from the $U_3$ point in $\Romanbar{IV}$.}\label{case5}
\end{figure}

\subsection{Proof of \Cref{main_theorem}}\label{sec:proofmain_theorem}

We now give the argument for the nonexistence of $T_4$  for a $2\times2$ system of conservation laws verifying the Hypotheses $(\mathcal{H}2)$.

We argue by contradiction.

Assume $ Y_1,\ldots, Y_4\in \mathcal{K}_{f,\eta,q}$ is such a $T_4$ configuration.

Recall here that by Hypotheses $(\mathcal{H}2)$ \eqref{item5}, the non-perverse Hugoniot locus condition (\Cref{nonperverse}) is verified. Thus, as in the proof of \Cref{main_theorem1}, by \Cref{lin_combo} we know that there exists constants $c_1,c_2\in\mathbb{R}$ such that the system \eqref{system}, with the entropy $\tilde \eta$ and entropy-flux $\tilde q$ (see \eqref{tdef}), admits a $T_4$ configuration $X_1,\ldots, X_4\in \mathcal{K}_{f,\tilde\eta,\tilde q}$ with the property that $(X_j)_{3,1}=(X_k)_{3,1}$ and $(X_j)_{3,2}=(X_k)_{3,2}$ for all $j,k$. Remark, the $T_4$ points have corresponding $U$-plane coordinates $((X_1)_{(1,1)},(X_1)_{(1,2)}),\ldots,((X_4)_{(1,1)},(X_4)_{(1,2)})$.

Let us write
\begin{align}
    U_i\coloneqq ((X_i)_{1,1},(X_i)_{2,1}),
\end{align}
for $i=1,\ldots,4$.

By above, we know the four points $U_1,\ldots,U_4$ are on the same level sets of $\tilde\eta$ and $\tilde q$. In particular, they are on the level set $\{\tilde\eta=C\}$ for some $C\in\mathbb{R}$. They are also on the level set $\{\tilde q=(X_1)_{3,2}\}$.

It follows from Hypotheses $(\mathcal{H}2)$ (\ref{item3}) that $\{\tilde\eta=C\}$ and $\{\tilde q=(X_1)_{3,2}\}$ intersect in at most four points. If they intersect at fewer points than four points, we have reached a contradiction and the proof is complete. 

We can then assume by Hypotheses $(\mathcal{H}2)$ (\ref{item3}) that $\tilde q$ restricted to a level set of $\tilde\eta$ has exactly two local maxima and exactly two local minima -- otherwise we have reached a contradiction. Remark that this means we are in the case where the level set of $\tilde\eta$ is bounded.

Without loss of generality, we can assume that $U_1$ is the point on $\{\tilde\eta=C\}$ between  global maxima $M\in\{\tilde\eta=C\}$ of $\tilde q$ and global minima $m\in\{\tilde\eta=C\}$ of $\tilde q$ on the set $\{\tilde\eta=C\}$. Note that the global maxima does not have to be unique, i.e. there might be more than one global maxima. Similarly, the global minima does not have to be unique. Consider one of the shock curves $S^k_{U_1}$ (for $k=1$ or $k=2$) emanating from $U_1$. 

If neither of the shock curves $S^1_{U_1}$ or $S^2_{U_1}$ enter the set $\{\tilde\eta < C\}$, then all of the other $U_i$ (for $i=2,3,4$) are in the same connected component of the Hugoniot locus at the point $U_1$ so by \Cref{TN_change_sign} we are done. 

Otherwise, due to the convexity of $\eta$ and Hypotheses $(\mathcal{H}2)$ (\ref{item1}), the integral in \eqref{diss_form} has a sign, and thus for $s\in\mathbb{R}\setminus\{0\}$ such that $S^k_{U_1}(s)\in \{\tilde\eta=C\}$, we must have $\tilde q (S^k_{U_1}(s))\neq \tilde q(U_1)$. In particular, consider $s$ such that $S^k_{U_1}(s)$ is the point where the $k$-shock curve exits the set $\{\tilde\eta\leq C\}$ for the last time. Note that the shock curve must exit the set because by Hypotheses $(\mathcal{H}2)$ (\ref{item3}) we are in the case where the level set is bounded, and by Hypotheses $(\mathcal{H}2)$ (\ref{item2}) the shock curves go to infinity. We have two cases: $\tilde q (S^k_{U_1}(s)) > \tilde q(U_1)$ and $\tilde q (S^k_{U_1}(s)) < \tilde q(U_1)$. We handle the first case (the second case is nearly identical). 

Thus, assume 
\begin{align}\label{ordering_qEq}
\tilde q (S^k_{U_1}(s)) > \tilde q(U_1). 
\end{align}

\begin{figure}[tb]
 % \centering
      \includegraphics[width=.7\textwidth]{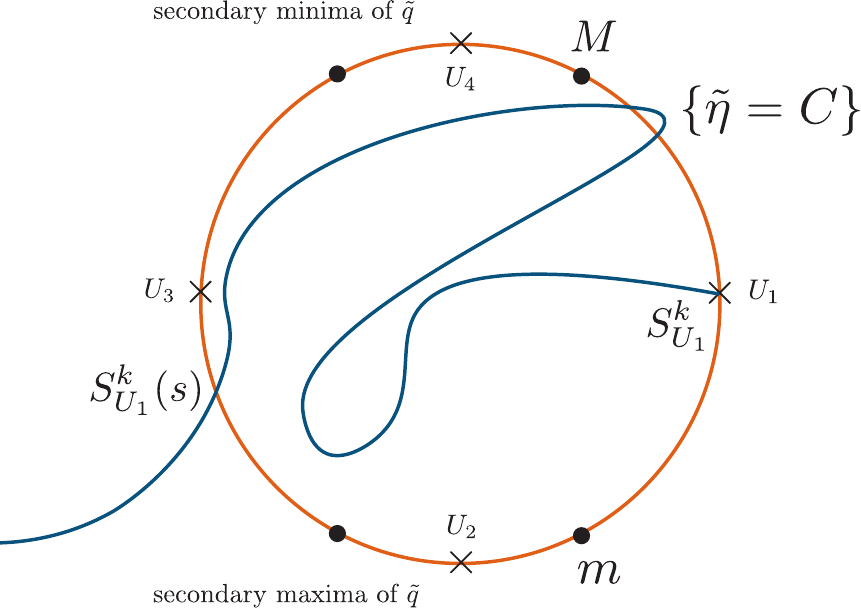}%\hspace{.8in}
  \caption{An illustration from the proof of \Cref{main_theorem}.}\label{four_crit_proof_fig}
\end{figure}

Thus, $S^k_{U_1}(s)$ must be somewhere between the four $U_i$, in particular between the same two of the $U_i$ as one of the two local maxima. 

We now argue that $S^k_{U_1}(s)$ is between the same  two $U_i$ as the global maxima. Assume this is not the case. Then, imagine dragging the starting point for the curve $S^k_{U_1}(s)$ along the level set $\{\tilde\eta = C\}$, moving the initial point from $U_1$ to the global maximum $M$. Then, by \Cref{curve_perturb}, we know that the $k$-shock curve emanating from the global maxima $M$ must make its last exit from the set $\{\tilde\eta\leq C\}$ between the  two of the $U_i$ which have in between them the maxima which is not $M$ (the second maxima). For an illustration, see \Cref{four_crit_proof_fig}. This is because, by \Cref{curve_perturb}, \Cref{diss_lemma} and \eqref{ordering_qEq}, as we perturb the initial point  $U_1$, the place where the curve $S^k$ last exits $\{\tilde\eta\leq C\}$ must stay between the same two of the $U_i$ (these $U_i$ ``lock'' the shock curve in place, preventing the shock curve from perturbing into a situation where it never enters the level set $\{\tilde\eta\leq C\}$ at all).

This however gives a contradiction to \eqref{diss_form} because $M$ is a global maxima, so $\tilde q$ cannot take a higher value on the level set  $\{\tilde\eta = C\}$ between the  two of the $U_i$ which contain the maxima which is not $M$ (the second maxima). Thus, we conclude $S^k_{U_1}(s)$ is between the same  two $U_i$ as the global maxima.

Similarly, in the case $\tilde q (S^k_{U_1}(s)) < \tilde q(U_1)$, we imagine dragging the starting point for the curve $S^k_{U_1}(s)$ along the level set $\{\tilde\eta = C\}$, moving the initial point from $U_1$ to the global minimum $m$.  

Hence, we can conclude that any shock curves emanating from $U_1$ (and entering $\{\tilde\eta\leq C\}$)  must make their last exits from the set $\{\tilde\eta\leq C\}$ between  $U_1$ and $m$ or between $U_1$ and $M$. 

From \Cref{TN_change_sign}, we have reached a contradiction and conclude that no such $T_4$ can exist.

\begin{figure}[tb]
 % \centering
      \includegraphics[width=.84\textwidth]{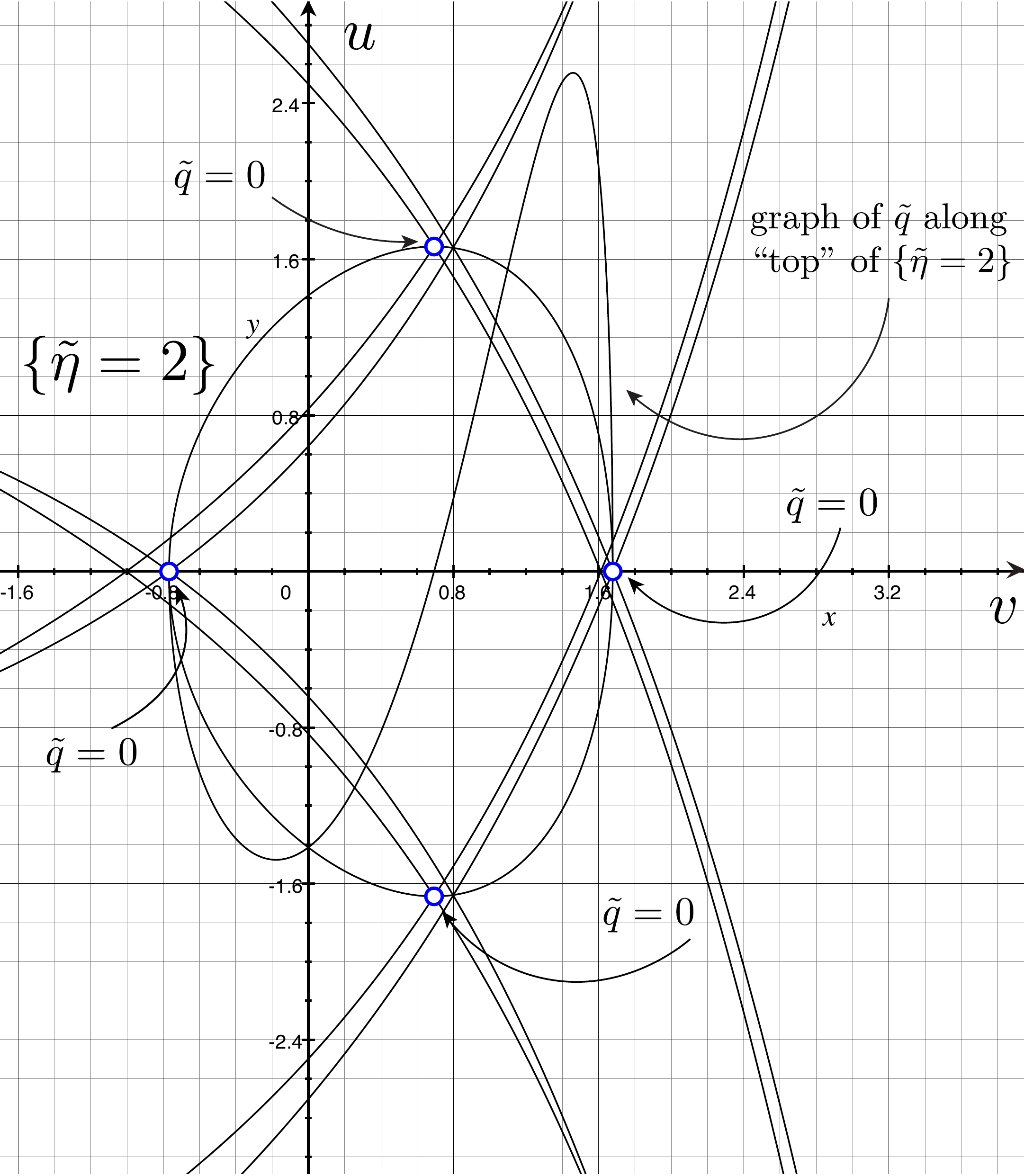}%\hspace{.8in}
  \caption{Consider the $p$-system \eqref{psystem}, for $p(v)=-e^v$, with entropy $\tilde\eta(v,u)=\frac{u^2}{2}+e^v-2u$ and  entropy-flux $\tilde q(v,u)=u(e^v-2)$. The figure shows the level set $\{\tilde\eta=2\}$. We consider the points on the level set $\{\tilde\eta=2\}$ where $\tilde q=0$. We have also included in this figure the graph of the function $\tilde q$ along the ``top'' of the level set $\{\tilde\eta=2\}$, which is parameterized by $u=\sqrt{4-2e^v+4v}$. Note that the graph of the function $\tilde q$ along the ``bottom'' of the level set is obtained by simply flipping this graph over the $v$-axis. Further, at each point where $\tilde q=0$, the union of the $S^1$ and $S^2$ curves is given.}\label{t4fig}
\end{figure}

The proof is illustrated with the example of the $p$-system \eqref{psystem} in \Cref{t4fig}. Remark that in this particular example, $\tilde q$ restricted to the level set of $\tilde\eta$ has exactly four extremal points.

\subsection{Proof of \Cref{changeofCoordsnoT4}}\label{sec:proofchangeofCoordsnoT4}

We argue by contradiction. 

Let the four points of a $T_4$ be $Z_1,\ldots, Z_4\in \mathcal{K}_{f,\eta,q}$.

By \Cref{lin_combo}, we know that there exists constants $c_1,c_2\in\mathbb{R}$ such that the system \eqref{system}, with the entropy $\tilde \eta$ and entropy-flux $\tilde q$ (see \eqref{tdef}), admits a $T_4$ configuration $X_1,\ldots, X_4\in \mathcal{K}_{f,\tilde\eta,\tilde q}$ with the property that $(X_i)_{3,1}=(X_j)_{3,1}$ and $(X_i)_{3,2}=(X_j)_{3,2}$ for all $j,k$. 

By changing the constant matrix $P$ in the context of \Cref{T_N_def} and adding a constant term to $\tilde\eta$, we can assume that at each of the $T_4$ points in $U$-space, $\tilde\eta=0$. 

Then, under the change of variables from \Cref{LEcoordinates}, the $T_4$ points will have corresponding $V$-plane coordinates in the new coordinate system. Let 
\begin{align}
f^{\mathrm{transformed}}=(f^{\mathrm{transformed}}_1,f^{\mathrm{transformed}}_2)
\end{align}
denote the flux in the new coordinates.

From \Cref{LEcoordinates}, we know that a shock $(U_L,U_R)$ verifying the Rankine-Hugoniot conditions for the system \eqref{system}, will be transformed to a shock $(V_L,V_R)$ verifying the Rankine-Hugoniot conditions for the transformed system. 

Remark that in the proofs of \Cref{main_theorem}, \Cref{main_theorem1}, the arguments procede by reducing to the case where the $T_4$ points are all on the same level sets of the entropy, and entropy-flux. This alone is enough to get a contradiction to the existence of a $T_4$ via the repeated use of \Cref{TN_change_sign}.

Finally, we note that due to the definition of $\hat\eta$ and $\hat q$ in \Cref{LEcoordinates}, and that $\tilde\eta=0$ on the $T_4$ points, the transformed $T_4$ points in $V$-space will also lie on the same level sets of the entropy $\hat\eta$ and entropy-flux $\hat q$. Let $V_1,\ldots, V_4$ denote the $V$-space coordinates of the four $T_4$ points after being transformed. Write $V_i=(v_i,w_i)$.

Then, define 
\begin{align}
Y_i\coloneqq
\begin{bmatrix}
v_i & f^{\mathrm{transformed}}_1(V_i) \\
w_i & f^{\mathrm{transformed}}_2(V_i)
\end{bmatrix}.
\end{align}

From the proofs of \Cref{main_theorem}, \Cref{main_theorem1}, we conclude that there exists $j$ such that $\det(Y_i-Y_j)$ has the same sign for all $i\neq j$. By continuity, we conclude that $\det(X_i-X_j)$ has the same sign for all $i\neq j$ (where we have restricted the determinant to the first two rows of the matrices $X_i$). This concludes the proof.

\section{Appendix}

\subsection{Proof of Lax's dissipation formula (\Cref{diss_lemma})}\label{proof_app}
We follow the proof in \cite{MR3537479}.
Write $u_R=S^k_{u_L}(s)$ for some $k=1,2$ and $s\in\mathbb{R}$. 

Define
\begin{align}
    \mathcal{F}_1(s)\coloneqq& q(S^k_{u_L}(s))-F(u_L),\\
    \mathcal{F}_2(s)\coloneqq& \sigma^k_{u_L}(s)\big(\eta(S^k_{u_L}(s))-\eta(u_L)\big)+\int\limits_0^s \frac{\text{d}}{\text{ds}}\big[\sigma^k_{u_L}\big](\tau)\eta(u_L|S^k_{u_L}(\tau))\,d\tau.
\end{align}

The proof is complete if we show $\mathcal{F}_1(s)=\mathcal{F}_2(s)$ for all $s$. Due to $S^k_{u_L}(0)=u_L$, the equality is true for $s=0$.

We then compute,
\begin{align}
    \mathcal{F}_1'(s)&=\nabla\eta(S^k_{u_L}(s))\frac{\text{d}}{\text{ds}}(f(S^k_{u_L}(s))-f(u_L)),
\shortintertext{and}
    \mathcal{F}_2'(s)&= \frac{\text{d}}{\text{ds}}\big[\sigma^k_{u_L}\big](s)(-\nabla\eta(S^k_{u_L}(s))(u_L-S^k_{u_L}(s))) 
     \\&\hspace{.3in}+ \sigma^k_{u_L}(s)(\nabla\eta(S^k_{u_L}(s))\frac{\text{d}}{\text{ds}}S^k_{u_L}(s))\\
    &=\nabla\eta(S^k_{u_L}(s))\frac{\text{d}}{\text{ds}}(\sigma^k_{u_L}(s)(S^k_{u_L}(s)-u_L)).
\end{align}

Then, we recall that by definition we have the Rankine-Hugoniot condition for the shock $(u_L,S^k_{u_L}(s),\sigma^k_{u_L}(s))$,
\begin{align}
    \sigma^k_{u_L}(s)(S^k_{u_L}(s)-u_L)=f(S^k_{u_L}(s))-f(u_L).
\end{align}
Thus, $\mathcal{F}_1'(s)=\mathcal{F}_2'(s)$ for $s\neq 0$ and this completes the proof.

\subsection{Equivalence of weak solutions for the  Eulerian and Langrangian equations}\label{equivappendix}
For the convenience of the reader, we reproduce nearly verbatim the main theorem on the equivalence of weak solutions for the  Eulerian and Langrangian equations of gas dynamics from \cite[Theorem 2]{wagnergasdynamics}.

\begin{theorem}[\protect{Equivalence of Eulerian and Lagrangian coordinates \cite[Theorem 2]{wagnergasdynamics}}]\label{LEcoordinates}
\hspace{.1in}

Let
\begin{align}\label{LEsystem}
    \partial_t U +\partial_x F(U)=0
\end{align}
for $U(x,t)=(u_1,\ldots,u_n)$, $(x,t)\in\mathbb{R}\times[0,\infty)$, $F(U)=(f_1,\ldots,f_n)$, $U\in\mathbb{R}^n$, be a system of conservation laws. For any bounded measurable solution of \eqref{LEsystem}, with $u_1(x,t)\geq 0$, let $y(x,t)$ satisfy
\begin{align}
    \frac{\partial y}{\partial x} =u_1(x,t), \hspace{.3in} \frac{\partial y}{\partial t}=-f_1(U(x,t)),
\end{align}
in the sense of distributions. Then $T\colon (x,t)\mapsto (y(x,t),t)$ is a Lipschitz-continuous transformation, which induces a one-to-one correspondence between $L^\infty$ weak solutions of \eqref{LEsystem} on $\mathbb{R}\times[0,\infty)$ satisfying $0<\epsilon\leq u_1(x,t)\leq M<\infty$ for some $\epsilon$ and $M$, and $L^\infty$ weak solutions of
\begin{align}\label{lagrangian_coords}
     \begin{cases}
  \partial_t (1/u_1) - \partial_y (f_1(U)/u_1)=0,\\
  \partial_t \Big[(u_2,\ldots,u_n)/u_1\Big]+\partial_y\Big[(f_2,\ldots,f_n)(U)-f_1(U)(u_2,\ldots,u_n)/u_1\Big]=0
    \end{cases}
\end{align}
on $\mathbb{R}\times[0,\infty)$ satisfying $\epsilon\leq u_1(x,t)\leq M$. In addition, if $F(U)/u_1$ is bounded for $u_1>0$, then there is a one-to-one correspondence between equivalence classes of bounded measurable solutions of \eqref{LEsystem} for which $u_j/u_1$ is bounded for $j=2,\ldots,n$ and
\begin{align}
    \int\limits_0^\infty u_1(x,t)\,dx=\int\limits_{-\infty}^0 u_1(x,t)\,dx=\infty,
\end{align}
and equivalence classes of weak solutions of \eqref{lagrangian_coords} for which $v_1=1/u_1$ is a Radon measure which dominates Lebesgue measure\footnote{See \cite{wagnergasdynamics} for details.}, and $v_j=u_j/u_1$ is bounded for $j=2,\ldots,n$. If $\eta(U)$ is any convex extension of \eqref{LEsystem}, i.e., there is an entropy-flux $q(U)$ such that $\nabla\eta DF=\nabla q$, so that $\partial_t \eta+\partial_x q=0$ for classical solutions, then any solution of \eqref{LEsystem} satisfying 
\begin{align}
    \partial_t \eta(U)+\partial_x q(U)\leq 0
\end{align}
corresponds to a solution of \eqref{lagrangian_coords} satisfying 
\begin{align}
    \partial_t \hat\eta(U)+\partial_x \hat q(U)\leq 0,
\end{align}
where $V=(v_1,\ldots,v_n)$, $\hat\eta(V)=\eta(U)/u_1$, and $\hat q(V)=q(U)-f_1(U)\hat\eta(V)$. Furthermore $\eta$ is convex if and only if $\hat\eta$ is convex as a function of $V$. 
\end{theorem}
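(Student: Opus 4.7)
The plan is to exploit the fact that the first equation of \eqref{LEsystem}, $\partial_t u_1+\partial_x f_1(U)=0$, is precisely the compatibility condition for the distributional $1$-form $u_1\,dx-f_1(U)\,dt$ to be closed; hence on the simply connected half-plane it admits a potential $y(x,t)$ with $\partial_x y=u_1$ and $\partial_t y=-f_1$. I would first construct $y$ this way. Boundedness of $u_1,f_1$ gives Lipschitz regularity in both variables, and the lower bound $u_1\geq\epsilon>0$ forces $y(\cdot,t)$ to be strictly increasing, so $T\colon(x,t)\mapsto(y(x,t),t)$ is a bi-Lipschitz homeomorphism of $\mathbb{R}\times[0,\infty)$.

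Next I would derive the Lagrangian system \eqref{lagrangian_coords} by pushing forward the weak formulation of \eqref{LEsystem} through $T$. The key computation is a change of variables with Jacobian $\partial x/\partial y=1/u_1$: for a test function $\varphi(x,t)=\psi(y(x,t),t)$ one computes $\varphi_t=\psi_t-\psi_y f_1$ and $\varphi_x=\psi_y u_1$, so that the weak identity $\iint(\varphi_t u_j+\varphi_x f_j)\,dx\,dt=0$ translates, after rewriting $dx\,dt=(1/u_1)\,dy\,dt$, into a weak identity in $(y,t)$ for the new unknowns $v_1=1/u_1$, $v_j=u_j/u_1$ with exactly the fluxes appearing in \eqref{lagrangian_coords}. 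The reverse direction is symmetric: starting from the Lagrangian first equation, the $1$-form $v_1\,dy+(f_1/u_1)\,dt$ is closed and its potential recovers the Eulerian coordinate $x(y,t)$. The entropy statement reduces to the same chain-rule computation applied to $\eta,q$; since the Jacobian $1/u_1$ is strictly positive, the sign of $\partial_t\eta+\partial_xq\leq 0$ is preserved and yields $\hat\eta=\eta/u_1$ and $\hat q=q-f_1\hat\eta$. Convexity of $\hat\eta$ in $V$ is a standard perspective-function argument: writing $\hat\eta(V)=v_1\,\eta(1/v_1,v_2/v_1,\ldots,v_n/v_1)$, one checks by a direct Hessian computation that this is convex in $V$ if and only if $\eta$ is convex in $U$.

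The main obstacle is the second part of the statement, which extends the correspondence to equivalence classes of weak solutions under the much weaker hypothesis that $v_1=1/u_1$ is merely a Radon measure dominating Lebesgue measure. In this generality $T$ is no longer a bi-Lipschitz homeomorphism, pointwise inverses are unavailable, and one must construct the inverse map from the distribution function of the measure $v_1$ while proving that suitable equivalence classes of solutions correspond on the two sides. Carefully reconciling these equivalences and tracking which $L^\infty$ conditions (namely $u_1\in[\epsilon,M]$ versus merely $u_j/u_1$ and $f_j/u_1$ bounded) translate into which Lagrangian conditions is the technical heart of Wagner's original argument, and I would follow that argument in \cite{wagnergasdynamics} rather than reinvent it.
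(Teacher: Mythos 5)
The paper itself contains no proof of this statement: Theorem \ref{LEcoordinates} is reproduced nearly verbatim from Wagner's paper and is invoked purely as a citation, so there is no internal argument to compare yours against. Your outline follows the same route as Wagner's original proof in the non-degenerate regime $0<\epsilon\leq u_1\leq M$ --- construct $y$ as a potential of the closed form $u_1\,dx-f_1(U)\,dt$, note that $T$ is bi-Lipschitz, push the weak formulation through $T$ with the Jacobian $1/u_1$, and handle the entropy inequality and the convexity equivalence via the perspective-function identity $\hat\eta(V)=v_1\,\eta\bigl((1,v_2,\ldots,v_n)/v_1\bigr)$ --- and your deferral of the Radon-measure (vacuum-admitting) part of the correspondence to \cite{wagnergasdynamics} is exactly what the paper does as well, so this is acceptable. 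One detail to repair in your sketch: the first equation of \eqref{lagrangian_coords} does not arise from the $j=1$ case of the pushed-forward weak identity, which is vacuous because $u_1f_1-f_1u_1=0$; it arises from the closedness of the form $v_1\,dy+(f_1/u_1)\,dt$, i.e.\ from the fact that the inverse change of variables $x(y,t)$ is itself a Lipschitz potential with $x_y=1/u_1$ and $x_t=f_1/u_1$ --- precisely the mechanism you invoke for the reverse direction --- and justifying this (together with the a.e.\ chain rule for the Lipschitz composition $\psi(y(x,t),t)$) is where the approximation work in Wagner's argument actually sits.
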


\bibliographystyle{plain}
\bibliography{references}
\end{document}